\numberwithin{equation}{section}
\theoremstyle{theorem}
\newtheorem{theorem}{Theorem}[section]
\newtheorem{lemma}[theorem]{Lemma}
\newtheorem{corollary}[theorem]{Corollary}
\theoremstyle{definition}
\newtheorem{definition}[theorem]{Definition}
\theoremstyle{remark}
\newtheorem{remark}[theorem]{Remark}
\newcommand\R{{\mathbb{R}}}
\newcommand\N{\mathbb{N}}
\newcommand\dx{\mathrm{d}x }
\newcommand\ds{\mathrm{d}s }
\newcommand\dH{\mathrm{d}\mathcal{H} }
\newcommand\eR{{\mathds{R}}\cup \{ +\infty\}}
\newcommand\V{{\mathrm V}} 
\newcommand\HH{{\mathrm H}}
\newcommand\D{{\mathrm D}}
\newcommand\E{\varphi}
\newcommand\Eh{\varphi^\HH}
\DeclareMathOperator{\T}{tr}
\DeclareMathOperator{\kernel}{ker}
\DeclareMathOperator{\sign}{sign}
\newcommand\abs[1]{\lvert#1\rvert} 
\newcommand\norm[1]{\lVert#1\rVert}
\definecolor{darkred}{rgb}{0.7,0.1,0.1}
\newcommand{\vanish}[1]{\relax}
\begin{document}

\title{Nonlinear semigroups generated by $j$-elliptic functionals}

\author{Ralph Chill}
\address{Institut f\"ur Analysis, Fachrichtung Mathematik, TU
Dresden, 01062 Dresden, Germany.}
\email{ralph.chill@tu-dresden.de}

\author{Daniel Hauer}
\address{School of Mathematics and Statistics, The University
of Sydney, NSW 2006, Australia.}
\email{daniel.hauer@sydney.edu.au}

\author{James Kennedy}
\address{Institut f\"ur Analysis, Dynamik und Modellierung, 
Universit\"at Stuttgart, Pfaffenwaldring 57,
70569 Stuttgart, Germany.}
\email{james.kennedy@mathematik.uni-stuttgart.de}

\subjclass[2010]{Primary: 37L05, 35A15, 34G25; Secondary: 47H05, 58J70, 35K55}

\keywords{Subgradients, nonlinear semigroups, invariance principles,
  comparison, domination, nonlinear Dirichlet forms, $p$-Laplace
  operator, Robin boundary condition, $p$-Dirichlet-to-Neumann operator,
  $1$-Laplace operator}

\date{\today}

\begin{abstract}
  We generalise the theory of energy functionals used in the study of
  gradient systems to the case where the domain of definition of the
  functional cannot be embedded into the Hilbert space $H$ on which the
  associated operator acts, such as when $H$ is a trace space. We show
  that under weak conditions on the functional $\E$ and the map $j$ from
  the effective domain of $\E$ to $H$, which in opposition to the
  classical theory does not have to be injective or even continuous, the
  operator on $H$ naturally associated with the pair $(\E,j)$
  nevertheless generates a nonlinear semigroup of contractions on
  $H$. We show that this operator, which we call the $j$-subgradient of
  $\E$, is the (classical) subgradient of another functional on $H$, and
  give an extensive characterisation of this functional in terms of $\E$
  and $j$.  In the case where $H$ is an $L^2$-space, we also
  characterise the positivity, $L^\infty$-contractivity and existence of
  order-preserving extrapolations to $L^q$ of the semigroup in terms of
  $\E$ and $j$.  This theory is illustrated through numerous examples,
  including the $p$-Dirichlet-to-Neumann operator, general Robin-type
  parabolic boundary value problems for the $p$-Laplacian on very rough
  domains, and certain coupled parabolic-elliptic systems.
\end{abstract}

\maketitle

\section{Introduction}

The theory of energy functionals on Hilbert spaces and their
subgradients has prov\-en to be a powerful tool for the study nonlinear
elliptic and parabolic partial differential equations
\cite{Att84,Barbu2010,Br73,Ze90IIb}.  Not only can existence and
uniqueness of solutions be established with minimal effort by
variational principles, the variational approach also allows one to
prove results about the regularity of solutions, maximum or comparison
principles and the large-time behaviour of solutions in the case of
parabolic problems. Very often, this theory is a natural generalisation
to the nonlinear case of the corresponding theory of sesquilinear forms
used in the study of {\em linear} elliptic and parabolic equations
\cite{DaLi88II,DaLi92V,DaLi93VI,Li69}; in that case, the form is defined
on a Hilbert space $\V$ and induces an operator on another Hilbert space
$\HH$. A key point in the whole theory is that the space $\V$ is
canonically embedded in $\HH$, that is, that there exists a bounded
injection $i:\V\to \HH$. One can however find a plethora of examples
which do not fit into this framework, although one would expect (or
hope) that variational methods should still be applicable; as a
prototype consider the case where $\HH$ is a trace space of $\V$.

Recently, Arendt and ter Elst developed a general theory of
\emph{$j$-elliptic forms} \cite{ArEl11,ArEl12}, see also \cite{AEKS14},
where the embedding $i$ is replaced with a closed linear map
$j:\V\supseteq D(j)\to\HH$, which is however not necessarily injective
or even bounded. This allowed them to develop a rich variational theory
of the \emph{Dirichlet-to-Neumann operator} acting on functions defined
on the boundary $\partial\Omega$ of a general (bounded) open set $\Omega
\subseteq \R^d$. We recall that the Dirichlet-to-Neumann operator
assigns to each boundary function $g\in H:=L^2(\partial\Omega)$
(Dirichlet data) the outer normal derivative $\frac{\partial
  u}{\partial\nu} \in L^2(\partial\Omega)$ (Neumann data) of the
solution $u \in V:= H^1(\Omega)$ of the Dirichlet problem
\begin{displaymath}
\begin{aligned}
	\Delta u &=0 &\qquad &\text{in $\Omega$}\\
	u &=g & &\text{on $\partial\Omega$},
\end{aligned}
\end{displaymath}
if such a function $\frac{\partial u}{\partial\nu}$ exists in
$L^2(\partial\Omega)$.

A corresponding variational theory of $p$-Dirichlet-to-Neumann operators
on Lip\-schitz domains, via energy functionals, analogous to the
theory of Arendt and ter Elst, was recently developed by one of the
current authors \cite{Ha15}; to the best of our knowledge this was the
first systematic treatment of this family of operators.

In this paper, we shall construct a general theory of \emph{$j$-elliptic
  energy functionals}, which will allow us to incorporate and treat
$p$-Dirichlet-to-Neumann operators together with various other types of
operators, including the $p$-Laplacian with Robin boundary
conditions on rough domains, and certain coupled parabolic-elliptic
systems, within the one unified framework. Along the way, we shall show
that many known, or ``classical'', results from the theory of energy
functionals and nonlinear semigroups on a Hilbert space can be
readily adapted to this setting.

In Section \ref{sec.subgradient} we lay the foundations of our abstract
theory. On a given Hilbert space $H$, given an energy functional $\E$ on 
$V$, we introduce the natural, possibly
multivalued operator associated with the pair $(\E, j)$, which we call
the \emph{$j$-subgradient} $\partial_j\E$ of $\E$. Then under natural
assumptions on $\E$ including a $j$-analogue of coercivity or convexity,
as well as the assumption that $j$ is weak-to-weak continuous, we show
that $\partial_j\E$ is cyclically monotone and even maximal monotone;
see Lemma~\ref{thm:gradient-cyclic} and
Theorem~\ref{thm:max-monoton-partial-j-E}. This allows us to derive a
parabolic generation result by invoking known results from the
literature; see Theorem~\ref{thm:generation} in Section
\ref{sec.invariance}. More precisely, the negative $j$-subgradient
$-\partial_j\E$ generates a strongly continous semigroup
$S=(S(t))_{t\geq 0}$ of nonlinear Lipschitz continuous mappings $S(t)$
on the closure of $j(D(\E))$ in $H$. We shall use
the notation $S\sim (\E,j)$ to say that the semigroup $S$ is
generated by the negative $j$-subgradient $-\partial_j\E$.

The observation that $\partial_j\E$ is cyclically monotone also implies
the existence of a ``classical'' functional $\E^H$ on the Hilbert space
$H$ such that $\partial_j \E \subseteq \partial \E^H$, analogous to the
corresponding statement for forms. We give an extensive characterisation
of the functional $\E^H$ in Section \ref{sec.identification}; see
Theorem~\ref{thm.eh.identification}. The case when $j$ is merely a
closed linear map, not necessarily weak-to-weak continuous, is treated
in Section \ref{sec.j.weakly.closed}; importantly, it turns out that this
seemingly more general case can be reduced to the one considered 
earlier via a simple argument introducing a new, related space and map.

Section \ref{sec.invariance} is devoted to an important extension of the
existing theory, namely the characterisation of invariance principles
of closed convex sets under the action of a semigroup $S\sim (\E,j)$
under the assumption that $\E$ is convex, proper
and $j$-elliptic, see Theorem~\ref{thm:beurling}. As in
the classical case, this allows us to give characterising conditions on
the associated functionals under which two semigroups can be compared
(Theorem~\ref{thm:comparison-property}), which as a special case leads
to order-preserving and dominating semigroups
(Corollaries~\ref{cor:order-preservingness} and
\ref{cor:domination-property}, respectively) if our Hilbert space $H$ is
of the form $L^2(\Sigma)$ for a measure space $\Sigma$.
Continuing with the $L^2$ theme, we give a characterisation under which
the semigroup in question, assumed to be order preserving on
$L^2(\Sigma)$, can be extrapolated to an order-preserving semigroup on
the whole scale of $L^q(\Sigma)$-spaces and even the whole scale of 
Orlicz $L^\psi$-spaces; see Theorem~\ref{thm:L1-contractivity}.

In Section \ref{sec.examples}, we illustrate our theory through four
different examples. In Section \ref{sec.robin}, we introduce a weak
formulation of a nonlinear parabolic problem on a domain $\Omega$ with
Robin boundary conditions and very weak assumptions on the boundary
$\partial \Omega$, showing that there is a strongly continuous semigroup
on $L^2(\Omega)$ solving this problem; see
Theorem~\ref{thm:robin-semigroup}. This puts results from Daners and Dr\'abek \cite{DaDr09}
into a general framework; see also Warma \cite{Wa14} who considered non-local 
Robin boundary conditions.

In Section \ref{sec.Dirichlet-to-Neumann}, we consider
$p$-Dirichlet-to-Neumann operators on non-smooth domains; this example
could be thought of as the motivating example for the whole theory (at
least in the smooth case). Here our treatment is relatively brief, the
prime purpose again being the establishment of generation and
extrapolation theorems, see Theorem~\ref{propo:order-preserving}; a more
complete treatment for the non-smooth case as in \cite{ArEl11} will be 
deferred to a later work.

Our next example, in Section \ref{sec.elliptic-parabolic}, is a system
of coupled parabolic-elliptic equations, equivalent to a degenerate
parabolic equation, where, roughly speaking, one takes open sets $\Omega
\subseteq \hat\Omega$ in $\R^d$, solves the usual Cauchy problem
$\partial_t u - \Delta_p u = f$ in $(0,T)\times\Omega$, and demands that
the solution $u$ have an extension $\hat u$ to $\hat\Omega$ which is
$p$-harmonic (i.e. $\Delta_p \hat u=0$) in $\hat\Omega \setminus \Omega$ 
and vanishes on $\partial\hat\Omega$ for each $t \geq 0$. Here, we
denote by $\Delta_{p}$ the celebrated $p$-Laplace operator given by
$\Delta_{p}u=\textrm{div} (\abs{\nabla u}^{p-2}\nabla u)$.

In addition to the well-posedness of the problem, we show that the generated 
semigroup dominates the semigroup generated by the $p$-Laplacian on 
$\Omega$ with Dirichlet boundary conditions.

The final example, in Section \ref{sec.elliptic-parabolic.1}, is a
partial repetition of the example from Section
\ref{sec.elliptic-parabolic}, but for the case $p=1$. The $1$-Laplace
operator serves as an illustration why we write our general theory for
functionals on locally convex topological vector spaces (instead of
Banach spaces).  In this final example we only prove well-posedness of
the underlying coupled parabolic-elliptic system.

\section{The $j$-subgradient and basic properties}
\label{sec.subgradient}

\subsection{Definition and characterisation as a classical gradient}
\label{sec.def.j-subgradient}

Throughout, let $\V$ be a real locally convex topological vector space and
$\HH$ a real Hilbert space equipped with inner product $\langle\cdot , \cdot
\rangle_\HH$ and associated norm denoted by $\|\cdot \|_\HH$. Further,
let $j: \V \to \HH$ be a linear operator which is weak-to-weak
continuous, and denote by $\eR$ the one-sided extended real line.

Given a functional $\E : \V \to \eR$, we call the set $\D (\E) := \{ \E
<+\infty \}$ its {\em effective domain}, and we say that $\E$ is
{\em proper} if the effective domain is non-empty. Its
{\em $j$-subgradient} is the operator
\begin{displaymath}
  \partial_j\E := \Bigg\{ (u,f)\in\HH\times\HH\;\Bigg\vert\;
  \begin{array}[c]{c}
    \exists \hat{u}\in\D (\E )
    \text{ s.t. } j(\hat{u}) = u \text{ and for every } \hat{v}\in\V \\[0,1cm]
    \liminf_{t\searrow 0} \frac{\E (\hat{u} +
      t\hat{v}) -\E (\hat{u})}{t} \geq \langle f,j(\hat{v})\rangle_\HH
  \end{array}
  \Bigg\}.
\end{displaymath}
We shall usually view operators on $\HH$ as relations
$A\subseteq\HH\times\HH$, but we shall also use the notation
\begin{displaymath}
  A(u) := \Big\{ f\in\HH\; \big\vert\; (u,f)\in A\Big\} ,
\end{displaymath}
which suggests that $A$ is a mapping from $\HH$ into $2^\HH$, the power
set of $\HH$, that is, $A$ is a so-called multivalued operator. We take the 
usual definition of the {\em domain} of an operator 
$A\subseteq\HH\times\HH$ as the set
\begin{displaymath}
  D(A) := \Big\{ u\in\HH\; \big\vert\; \exists f\in\HH \text{ s.t. }
  (u,f)\in A \Big\} ,
\end{displaymath}
and similarly for the {\em range} of $A$. We say that
the functional $\E$ is {\em $j$-semiconvex}  if there exists
$\omega\in\R$ such that the ``shifted'' functional 
\begin{align*}
  \E_\omega : \V & \to \eR , \\
  \hat{u} & \mapsto \E (\hat{u}) + \frac{\omega}{2} \, \| j(\hat{u}
  )\|_\HH^2
\end{align*}
is convex, and we say that the functional $\E$ is {\em $j$-elliptic} if
there exists $\omega\geq 0$ such that $\E_\omega$ is convex and
coercive. Saying that a functional $\E$ defined on a locally convex
topological vector space is {\em coercive} means that sublevels $\{ \E
\leq c\}$ are relatively weakly compact for every $c\in\R$. Finally, we
say that the functional $\E$ is {\em lower semicontinuous} if the
sublevels $\{ \E \leq c\}$ are closed in the topology of $\V$ for every
$c\in\R$.

\begin{remark} 
  \label{rem.banach.space.case}
  In the important special case when $\V$ is a Banach space, $j$ is
  weak-to-weak continuous if and only if $j$ is continuous. Moreover, in
  this case, the ``shifted'' functional $\E_\omega$ is lower
  semicontinuous if and only if $\E$ itself is lower
  semicontinuous. Finally, if $\V$ is a {\em reflexive} Banach space,
  then $\E$ is coercive if and only if the sublevels $\{\E \leq c\}$ are
  (norm-) bounded.
\end{remark}

\begin{lemma} 
  \label{lem.easy.ident}
  Let $\V$, $\HH$, $j$ and $\E$ be as above.
  \begin{itemize}
  \item[(a)] If $\E_\omega$ is convex for some $\omega\in \R$, then
    \begin{displaymath}
      \partial_j\E  = \Bigg\{ (u,f)\in\HH\times\HH \;\Bigg\vert\;
      \begin{array}[c]{c}
        \exists \hat{u}\in\D (\E )
        \text{ s.t. }
        j(\hat{u}) = u \text{ and for every } \hat{v}\in\V \\[0,1cm]
        \E_\omega (\hat{u} + \hat{v}) - \E_\omega
        (\hat{u}) \geq \langle f + \omega j(\hat{u}) , j(\hat{v}) \rangle_\HH
      \end{array}
      \Bigg\} .
    \end{displaymath}
  \item[(b)] If $\E$ 
    is G\^ateaux differentiable with G\^ateaux derivative $\E'$, then
    \begin{displaymath}
      \partial_j\E  = \Bigg\{ (u,f)\in\HH\times\HH \;\Bigg\vert\;
      \begin{array}[c]{c}
        \exists \hat{u}\in\D (\E )
        \text{ s.t. } j(\hat{u}) = u \text{ and for every }
        \hat{v}\in\V\\[0,1cm]
        \E'(\hat{u} ) \hat{v}= \langle f,j(\hat{v})\rangle_\HH
      \end{array}
      \Bigg\} .
    \end{displaymath}
  \end{itemize}
\end{lemma}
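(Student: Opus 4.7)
The plan is to prove each part by reducing the defining $\liminf$ condition for $\partial_j\E$ to the claimed simpler condition, using the hypothesis (convexity or differentiability) to carry out the reduction.

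For part (a), I would first compute the shift explicitly. Expanding $\|j(\hat u)+tj(\hat v)\|_\HH^2$ yields
\[
  \frac{\E_\omega(\hat u + t\hat v) - \E_\omega(\hat u)}{t}
  = \frac{\E(\hat u + t\hat v) - \E(\hat u)}{t}
  + \omega\,\langle j(\hat u), j(\hat v)\rangle_\HH
  + \tfrac{\omega t}{2}\,\|j(\hat v)\|_\HH^2 .
\]
Since the last term vanishes as $t\searrow 0$, taking $\liminf$ shows that the original defining condition is equivalent to
\[
  \liminf_{t\searrow 0} \frac{\E_\omega(\hat u + t\hat v)-\E_\omega(\hat u)}{t}
  \;\geq\; \langle f + \omega j(\hat u), j(\hat v)\rangle_\HH
\]
for every $\hat v\in\V$. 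It then remains to show that, for the convex functional $\E_\omega$, this is equivalent to the corresponding single-difference inequality $\E_\omega(\hat u + \hat v) - \E_\omega(\hat u) \geq \langle f+\omega j(\hat u), j(\hat v)\rangle_\HH$. This is where convexity enters: for convex $g$, the real function $t\mapsto t^{-1}(g(t\hat v+\hat u) - g(\hat u))$ is monotonically non-decreasing on $(0,\infty)$, so its $\liminf$ at $0$ equals its infimum. One direction of the equivalence is then immediate (take $t=1$), and the converse follows by replacing $\hat v$ with $t\hat v$ and dividing by $t>0$ before taking $\liminf$.

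For part (b), Gâteaux differentiability says $\liminf_{t\searrow 0} t^{-1}(\E(\hat u + t\hat v) - \E(\hat u))$ is actually a limit equal to $\E'(\hat u)\hat v$, so the defining inequality becomes $\E'(\hat u)\hat v \geq \langle f, j(\hat v)\rangle_\HH$ for every $\hat v\in\V$. Because $\E'(\hat u)$ and $\hat v\mapsto \langle f, j(\hat v)\rangle_\HH$ are both linear, applying the inequality to $-\hat v$ upgrades it to the equality $\E'(\hat u)\hat v = \langle f, j(\hat v)\rangle_\HH$, and the reverse implication is trivial.

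No step looks to be a serious obstacle. The only mild care is needed in part (a): verifying that the liminf-to-single-inequality reduction is genuine in both directions, for which the monotonicity of difference quotients of one-dimensional convex functions is the decisive tool.
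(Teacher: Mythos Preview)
Your proposal is correct and follows essentially the same approach as the paper. The paper's proof is very terse---it records the limit $\lim_{t\searrow 0}\tfrac{\omega}{2}t^{-1}(\|j(\hat u+t\hat v)\|_\HH^2-\|j(\hat u)\|_\HH^2)=\omega\langle j(\hat u),j(\hat v)\rangle_\HH$ and then simply says ``claim~(a) follows from the assumption that $\E_\omega$ is convex'' and ``claim~(b) is a straightforward consequence''---while you have correctly spelled out the two details the paper leaves implicit: the monotonicity of difference quotients of one-dimensional convex functions for~(a), and the $\hat v\mapsto -\hat v$ linearity trick for~(b).
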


\begin{proof}
  Let $\omega\in \R$. Then from the limit
  \begin{equation}
    \label{eq:2}
    \lim_{t\searrow 0} \frac{\omega}{2} \, \frac{\| j(\hat{u} +
      t\hat{v}) \|_\HH^2 - \| j(\hat{u})\|_\HH^2}{t}
    = \omega \,  \langle j(\hat{u}) , j(\hat{v}) \rangle_\HH  ,
  \end{equation}
  we obtain first that
  \begin{displaymath}
    \partial_j\E  = \Bigg\{ (u,f)\in\HH\times\HH \;\Bigg\vert\;
    \begin{array}[c]{c}
      \exists \hat{u}\in\D (\E )
      \text{ s.t. } j(\hat{u}) = u \text{ and for every } 
      \hat{v}\in\V \\[0,1cm]
      \liminf_{t\searrow 0}
      \frac{\E_\omega(\hat{u} + t\hat{v}) -\E_\omega (\hat{u})}{t} \geq
      \langle f + \omega j(\hat{u}) ,
      j(\hat{v}) \rangle_\HH
    \end{array}
    \Bigg\} ,
  \end{displaymath}
  which holds for general $\E$. Now claim~(a) follows from the
  assumption that $\E_\omega$ is convex. Claim~(b) is a straightforward
  consequence of the definition of the $j$-subgradient and the G\^ateaux
  differentiability of $\E$.
\end{proof}

\begin{remark} 
  \label{rem.comparison} 
  (a) There exists a well-established classical setting in which
  subgradients of functionals have been defined. This is the setting $\V
  = \HH$ and $j=I$ the identity operator.  The $j$-subgradient then
  coincides with the usual subgradient defined in the literature; see,
  for example, Brezis~\cite{Br73}, Rockafellar~\cite{Rc70}. In this
  classical situation, we call $j$-elliptic functionals simply {\em
    elliptic functionals}, we call the $j$-subgradient simply {\em
    subgradient}, and we write $\partial\E$
  instead of $\partial_j \E$.\\
  (b) Another setting frequently encountered in the literature
  is the case where $\V$ is a Banach space and $j:\V\to\HH$ is a bounded, 
  {\em injective} operator with {\em dense range} (see, for example, J.-L. 
  Lions \cite{Li69}). In other words, $\V$ is a Banach
  space which is continuously and densely embedded into a Hilbert space
  $\HH$. For simplicity, $\V$ may then be identified with a subspace of
  $\HH$ (the range of $j$), so that $j$ reduces to the identity operator
  which is usually neglected in the notation. Identify $\HH$ with its dual space, so that we have a
  Gelfand triple
  \begin{displaymath}
    \V \hookrightarrow \HH = \HH' \hookrightarrow \V' .
  \end{displaymath}

  Let $\E : \V\to\eR$ be a G\^ateaux differentiable functional with
  G\^ateaux derivative $\E' : \V \to \V'$.  By Lemma
  \ref{lem.easy.ident} (b), the $j$-subgradient of $\E$ is then a {\em
    single-valued operator} on the Hilbert space $\HH$ in the sense that
  for every $u\in \HH$ there is at most one $f\in\HH$ such that
  $(u,f)\in \partial_j\E$.  It is then natural to identify
  $\partial_j\E$ with an operator $\HH\supseteq D(\partial_j\E ) \to
  \HH$. By Lemma \ref{lem.easy.ident} (b), this operator coincides with
  the part of the G\^ateaux derivative $\E'$ in $\HH$.  \\
  (c) Conversely, in the setting of (b) above, we may also ``extend''
  the functional $\E$ to the functional $\Eh :\HH\to\eR$ given by
  \begin{displaymath}
    \Eh (u) := 
    \begin{cases}
      \E (\hat{u} ) & \text{if } j(\hat{u}) = u , \\[2mm]
      +\infty & \text{else;}
    \end{cases}
  \end{displaymath}
  this extension is well defined by the injectivity of $j$. A
  straightforward calculation shows that
  \begin{displaymath}
   \partial_j\E = \partial\Eh .
   \end{displaymath}
   Hence, the situation from (b) can be reduced to the situation from
   (a), that is, the situation of classical subgradients. We shall see
   below that this remains true in more general situations.
\end{remark}
\bigskip

We call a finite sequence $((u_i,f_i))_{0\leq i\leq n}$ {\em cyclic} if
$(u_0,f_0)=(u_n,f_n)$. An operator $A \subseteq \HH\times\HH$ is called
{\em cyclically monotone} if for every cyclic sequence
$((u_i,f_i))_{0\leq i\leq n}$ in $A$ one has
\begin{displaymath}
  \sum_{i=1}^{n} \langle f_i , u_{i}-u_{i-1}\rangle_\HH \geq 0 .
\end{displaymath}
Clearly, every cyclically monotone operator is {\em monotone} in the
sense that for every $(u_1,f_1)$, $(u_2,f_2)\in A$ one has
\begin{displaymath}
 \langle f_2 -f_1 , u_2-u_1 \rangle \geq 0 ;
\end{displaymath}
simply choose $n=2$ in the previous inequality.

\begin{lemma} \label{thm:gradient-cyclic} 
  Assume that $\E :\V\to\eR$ is
  convex. Then the $j$-subgradient $\partial_j\E$ is cyclically
  monotone.
\end{lemma}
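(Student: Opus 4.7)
The plan is to reduce to the standard subgradient inequality (available since $\E$ is convex) and then exploit a telescoping sum, being careful about how the witnesses in the preimages of $j$ are chosen.

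\textbf{Step 1: Use convexity to get a clean subgradient inequality.} Since $\E$ is convex, I would apply Lemma~\ref{lem.easy.ident}(a) with $\omega=0$. This gives the equivalent description
\[
\partial_j\E = \Bigg\{(u,f)\in\HH\times\HH :
\begin{array}[c]{c}
\exists \hat{u}\in\D(\E) \text{ s.t. } j(\hat{u})=u \text{ and } \forall \hat{v}\in\V \\[0.1cm]
\E(\hat{u}+\hat{v}) - \E(\hat{u}) \geq \langle f, j(\hat{v})\rangle_\HH
\end{array}
\Bigg\},
\]
so that the $\liminf$ in the definition is replaced by a genuine (global) affine minorant.

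\textbf{Step 2: Select preimages for a cyclic sequence.} Given a cyclic sequence $((u_i,f_i))_{0\leq i\leq n}$ in $\partial_j\E$, I would pick, for each $i=0,\ldots,n$, a witness $\hat{u}_i \in \D(\E)$ with $j(\hat{u}_i)=u_i$ satisfying the inequality above. The one subtlety worth flagging is that the pairs at index $0$ and index $n$ are literally the same element of $\partial_j\E$, so I am free to pick the \emph{same} preimage $\hat{u}_0 = \hat{u}_n$. This is harmless because the sum $\sum_{i=1}^n \langle f_i, u_i-u_{i-1}\rangle_\HH$ to be bounded below depends only on the $(u_i,f_i)$, not on the chosen preimages; but making this choice ensures the telescoping that follows actually collapses to zero.

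\textbf{Step 3: Test and telescope.} For each $i=1,\ldots,n$ I would plug $\hat{v} := \hat{u}_{i-1}-\hat{u}_i$ into the subgradient inequality at $\hat{u}_i$ to obtain
\[
\E(\hat{u}_{i-1}) - \E(\hat{u}_i) \;\geq\; \langle f_i,\, j(\hat{u}_{i-1}) - j(\hat{u}_i)\rangle_\HH \;=\; \langle f_i,\, u_{i-1}-u_i\rangle_\HH.
\]
Summing over $i$, the left-hand side telescopes to $\E(\hat{u}_0)-\E(\hat{u}_n)=0$ thanks to the choice $\hat{u}_0=\hat{u}_n$, so after rearranging the sign one obtains $\sum_{i=1}^n \langle f_i, u_i - u_{i-1}\rangle_\HH \geq 0$, which is exactly cyclic monotonicity.

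\textbf{Main (mild) obstacle.} There is no deep obstacle here; the only point one must not overlook is that a priori the witness preimage $\hat{u}$ in the definition of $\partial_j\E$ need not be unique, so the telescoping identity $\E(\hat{u}_0)-\E(\hat{u}_n)=0$ is not automatic. Recognising that the definition only asserts existence of a preimage, combined with the fact that $(u_0,f_0)=(u_n,f_n)$ as pairs, lets one pick the same witness at both endpoints, and the rest is a one-line computation.
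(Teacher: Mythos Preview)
Your proof is correct and follows essentially the same approach as the paper: invoke Lemma~\ref{lem.easy.ident}(a) with $\omega=0$, choose preimages $\hat{u}_i$ with $\hat{u}_0=\hat{u}_n$, test with $\hat{v}=\hat{u}_{i-1}-\hat{u}_i$, and telescope. The paper handles the endpoint preimage issue more tersely (simply asserting ``there exists a cyclic sequence $(\hat{u}_i)$''), whereas you explicitly justify why this choice is permissible, which is a welcome clarification.
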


\begin{proof}
  Let $((u_i,f_i))_{0\leq i\leq n}$ be a cyclic
  sequence in $\partial_j\E$.
  Then there exists a cyclic sequence $(\hat{u}_i)_{0\leq i\leq n}$ in
  $\V$ such that
  \begin{displaymath}
    j(\hat{u}_i ) = u_i \text{ for every } 0\leq i\leq n  ,
  \end{displaymath}
  and, by Lemma \ref{lem.easy.ident} (a), for every $\hat{v}\in\V$ one has
  \begin{align*}
    \E (\hat{u}_1 + \hat{v}) -\E (\hat{u}_1) & \geq
    \langle f_1,j(\hat{v})\rangle_\HH , \\
    \vdots & \phantom{\geq\geq} \quad \vdots \\
    \E (\hat{u}_{n} + \hat{v}) -\E (\hat{u}_{n}) & \geq \langle
    f_{n},j(\hat{v})\rangle_\HH . 
  \end{align*}
  Choosing $\hat{v} = \hat{u}_{i-1} - \hat{u}_i$ in the $i$-th
  inequality, we obtain
  \begin{align*}
    \E (\hat{u}_0 ) -\E (\hat{u}_1)
    & \geq \langle f_1,j(\hat{u}_0)-j(\hat{u}_1) \rangle_\HH , \\
    \vdots & \phantom{\geq\geq} \quad \vdots \\
    \E (\hat{u}_{n-1} ) -\E (\hat{u}_{n}) & \geq \langle
    f_{n},j(\hat{u}_{n-1}) -j(\hat{u}_{n})\rangle_\HH .
  \end{align*}
  Summing the inequalities and using the cyclicity of
  $(\hat{u}_i)_{0\leq i\leq n}$ we obtain
  \begin{displaymath}
    0 \geq \sum_{i=1}^{n} \langle f_i , u_{i-1} - u_i \rangle_\HH ,
  \end{displaymath}
  which implies the claim.
\end{proof}

By \cite[Th\'eor\`eme 2.5, p.38]{Br73}, every cyclically monotone operator
$A$ on a Hilbert space $\HH$ is already contained in a classical
subgradient (that is, $j=I$; see Remark \ref{rem.comparison} (a)
above). More precisely, \cite[Th\'eor\`eme 2.5, p.38]{Br73} and Lemma
\ref{thm:gradient-cyclic} imply the following result.

\begin{corollary}\label{cor:extended-convex-functional}
  Assume that $\E : \V \to\eR$ is convex. Then there exists a convex,
  proper, lower semicontinuous functional $\Eh : \HH \to \eR$ such that
  $\partial_j\E \subseteq \partial\Eh$.
\end{corollary}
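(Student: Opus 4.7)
The plan is to reduce the statement directly to the Brezis result cited just before it, namely \cite[Th\'eor\`eme 2.5, p.38]{Br73}, which asserts that every cyclically monotone operator on the Hilbert space $\HH$ is contained in the classical subgradient $\partial\Eh$ of some convex, proper, lower semicontinuous functional $\Eh : \HH \to \eR$. By the very definition of the $j$-subgradient, $\partial_j\E$ is by construction a subset of $\HH\times\HH$ (not of $\V\times\V$ or of some mixed product), so that the Brezis theorem is at least formally applicable to it.

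The single hypothesis of the Brezis theorem that needs checking is cyclic monotonicity of $\partial_j\E$, but this is precisely the content of Lemma~\ref{thm:gradient-cyclic}, which uses the convexity of $\E$ that we have assumed. Invoking Brezis then yields the desired functional $\Eh$, and the inclusion $\partial_j\E \subseteq \partial\Eh$ is exactly the conclusion of his theorem.

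The only subtlety worth flagging is properness. Brezis' construction of $\Eh$ is based on a supremum over finite ``cyclic-type'' sums anchored at a fixed base point $(u_0,f_0) \in \partial_j\E$, so it produces a proper functional precisely when $\partial_j\E$ is non-empty. If $\partial_j\E$ is empty, one simply takes $\Eh \equiv 0$ (or any other constant), which is convex, proper and lower semicontinuous, and trivially satisfies $\emptyset = \partial_j\E \subseteq \partial\Eh$. Thus the proof reduces to a one-line application of Lemma~\ref{thm:gradient-cyclic} followed by a one-line citation of Brezis, with a trivial case distinction to handle the degenerate situation; there is no real obstacle.
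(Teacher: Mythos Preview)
Your proposal is correct and matches the paper's own argument exactly: the paper simply invokes Lemma~\ref{thm:gradient-cyclic} for cyclic monotonicity and then applies \cite[Th\'eor\`eme 2.5, p.38]{Br73}. Your additional remark handling the empty case $\partial_j\E = \emptyset$ is a sensible completeness point that the paper leaves implicit.
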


We shall identify the functional $\Eh$ under somewhat stronger assumptions 
on $\E$ in Section \ref{sec.identification} below; see Theorem~\ref{thm.eh.identification}.

\begin{theorem}
  \label{thm:max-monoton-partial-j-E}
  Assume that $\E :\V \to\eR$ is convex, proper, lower semicontinuous
  and $j$-elliptic. Then the $j$-subgradient $\partial_j\E$ is maximal
  monotone.
\end{theorem}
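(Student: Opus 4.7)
The plan is to invoke Minty's characterisation: a monotone operator $A$ on a Hilbert space $\HH$ is maximal monotone if and only if $R(\lambda I + A) = \HH$ for some (equivalently, every) $\lambda > 0$. Since $\partial_j\E$ is cyclically monotone by Lemma~\ref{thm:gradient-cyclic}, it is monotone, so it suffices to establish the range condition for a single suitable $\lambda$.

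Fix any $\lambda > \max\{\omega,0\}$, where $\omega \geq 0$ is the $j$-ellipticity constant from the definition, and let $g \in \HH$ be arbitrary. Consider the shifted functional
\begin{displaymath}
  F : \V \to \eR, \qquad F(\hat{w}) := \E(\hat{w}) + \tfrac{\lambda}{2}\|j(\hat{w})\|_\HH^2 - \langle g, j(\hat{w})\rangle_\HH .
\end{displaymath}
Rewriting $F(\hat{w}) = \E_\omega(\hat{w}) + \tfrac{\lambda-\omega}{2}\|j(\hat{w})\|_\HH^2 - \langle g, j(\hat{w})\rangle_\HH$, one sees that $F$ is the sum of the convex functional $\E_\omega$ and a convex continuous quadratic perturbation in $j(\hat{w})$, so $F$ is convex and proper. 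Lower semicontinuity of $F$ in the topology of $\V$ follows from the assumed lower semicontinuity of $\E$ together with the weak-to-weak continuity of $j$, which makes $\hat{w}\mapsto \|j(\hat{w})\|_\HH^2$ weakly (hence sequentially, hence for our sublevel purposes) lower semicontinuous and $\hat{w}\mapsto \langle g, j(\hat{w})\rangle_\HH$ weakly continuous. For coercivity, absorb the linear term via Young's inequality
\begin{displaymath}
  \big|\langle g, j(\hat{w})\rangle_\HH\big| \leq \tfrac{\lambda-\omega}{4}\|j(\hat{w})\|_\HH^2 + \tfrac{1}{\lambda-\omega}\|g\|_\HH^2,
\end{displaymath}
which together with $\lambda>\omega$ gives $F(\hat{w}) \geq \E_\omega(\hat{w}) - \tfrac{1}{\lambda-\omega}\|g\|_\HH^2$, so each sublevel $\{F\leq c\}$ is contained in a sublevel of $\E_\omega$, hence is relatively weakly compact by $j$-ellipticity of $\E$.

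Since any proper, convex, lower semicontinuous, coercive functional on a locally convex topological vector space attains its infimum (the direct method: pick a minimising sequence, extract a weakly convergent subnet from a sublevel set, apply lower semicontinuity), $F$ has a minimiser $\hat{u}\in \D(\E)$. Set $u := j(\hat{u})$ and $f := g - \lambda u$. Then for every $\hat{v}\in\V$ and $t>0$, $F(\hat{u}+t\hat{v})\geq F(\hat{u})$; rearranging, dividing by $t$, taking $\liminf_{t\searrow 0}$, and using the limit~\eqref{eq:2} yields
\begin{displaymath}
  \liminf_{t\searrow 0}\frac{\E(\hat{u}+t\hat{v})-\E(\hat{u})}{t} \geq \langle g - \lambda j(\hat{u}), j(\hat{v})\rangle_\HH = \langle f, j(\hat{v})\rangle_\HH,
\end{displaymath}
so $(u,f)\in \partial_j\E$ and $g = \lambda u + f \in R(\lambda I + \partial_j\E)$. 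As $g\in\HH$ was arbitrary, $R(\lambda I + \partial_j\E) = \HH$, and Minty's theorem yields maximal monotonicity.

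The main technical point is the verification of coercivity and (weak) lower semicontinuity of $F$ on $\V$, where it is crucial that $j$ be weak-to-weak continuous so that the perturbation terms involving $j(\hat{w})$ interact correctly with the weak topology used in the definition of $j$-ellipticity; this is precisely the hypothesis that lets the classical variational argument go through in the present setting where $j$ need not be injective or even a homeomorphism onto its range.
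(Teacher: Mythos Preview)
Your proof is correct and follows essentially the same route as the paper's: both invoke Minty's theorem and reduce surjectivity of $\lambda I+\partial_j\E$ (for some $\lambda>\omega$) to the existence of a minimiser of the shifted functional $\E_\lambda(\cdot)-\langle g,j(\cdot)\rangle_\HH$, obtained via the direct method using convexity, lower semicontinuity, and coercivity inherited from $\E_\omega$. The only cosmetic differences are that the paper appeals to Lemma~\ref{lem.easy.ident}(a) to rewrite the first-order optimality condition, and handles weak lower semicontinuity by invoking Hahn-Banach (convex closed $\Rightarrow$ weakly closed) rather than arguing directly via weak-to-weak continuity of $j$ as you do.
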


\begin{proof}
   Since $\E$ is convex, Lemma~\ref{thm:gradient-cyclic}
  implies that the $j$-subgradient is monotone. By Minty's theorem, it
  suffices to prove that $\omega' I +\partial_j\E$ is surjective for
  some $\omega' >0$. By assumption, we can choose $\omega\geq 0$ such
  that $\E_\omega$ is convex, proper, lower semicontinuous and
  coercive. Now, fix $\omega' >\omega$ and let $f\in\HH$. Then for every
  $\hat{u}\in\V$ and $u:= j(\hat{u})$ we have by definition of the
  $j$-subgradient (or more precisely Lemma~\ref{lem.easy.ident}(a))
  \begin{equation} 
    \label{eq.stationary.problem}
    f\in \omega'\, u + \partial_j\E (u)
  \end{equation}
  if and only if
  \begin{displaymath}
     \E_{\omega'} (\hat{u} +
    \hat{v}) - \E_{\omega'} (\hat{u} )
    \geq \langle f,j(\hat{v}) \rangle_\HH \qquad\text{for all $\hat v\in V$},
  \end{displaymath}
  or
  \begin{displaymath}
     \E_{\omega'} (\hat{u} +
    \hat{v}) - \langle f , j(\hat{u}) + j(\hat{v}) \rangle_\HH \geq
    \E_{\omega'} (\hat{u} ) -
    \langle f,j(\hat{u}) \rangle_\HH \qquad\text{for all $\hat v\in V$.}
  \end{displaymath}
  The latter property is equivalent to
  \begin{displaymath}
    \hat{u} = \arg\min \, \left\{ \E_{\omega'} (\cdot ) -
    \langle f , j(\cdot )\rangle_\HH \right\}.
  \end{displaymath}
  In other words, finding a solution of the stationary problem
  \eqref{eq.stationary.problem} is equivalent to finding a minimiser of
  the functional 
  \begin{math}
  \E_{\omega'} (\cdot ) - \langle f,j(\cdot)\rangle_\HH. 
  \end{math}
  By choice of $\omega$, $\E_\omega$ is convex, lower
  semicontinuous and coercive. Moreover, since $\omega'
  >\omega$, the functional
  \begin{displaymath}
    \V \to \R, \quad \hat{u} \mapsto \frac{\omega' -\omega}{2} 
    \, \| j(\hat{u})\|_\HH^2 -\langle f,j(\hat{u}) \rangle_\HH
  \end{displaymath}
  is convex, lower semicontinuous, and bounded from below. As a
  consequence, $\E_{\omega'} (\cdot )- \langle f , j(\cdot )\rangle_\HH$
  is convex, lower semicontinuous, and coercive. Hence, sublevels of
  this functional are convex, closed, and relatively weakly compact. By
  the Hahn-Banach theorem in the form of \cite[Theorem 3.12, p.66]{Ru73},
  the closure in the topology on $V$ and the weak closure of any convex
  set are identical. Hence sublevels of this functional are weakly
  compact. A standard compactness argument using a decreasing sequence
  of sublevels now implies that the functional above attains its
  minimum, and the claim follows.
\end{proof}

\begin{corollary} 
  \label{cor.extended-quasiconvex-functional} 
  Assume that $\E$ is $j$-semiconvex. Then there exists a
  proper, lower semicontinuous, elliptic functional $\Eh : \HH
  \to\eR$ such that $\partial_j\E \subseteq
  \partial\Eh$. If, in addition, $\E$ is proper, lower semicontinuous 
  and $j$-elliptic, then $\partial_j\E
  = \partial\Eh$, and $\omega\, I + \partial_j\E$ is maximal monotone
  for some $\omega\geq 0$.
\end{corollary}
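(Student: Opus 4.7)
The plan is to reduce the $j$-semiconvex case to the $j$-convex case already handled by Corollary~\ref{cor:extended-convex-functional}, and then to transfer the resulting functional back to $\HH$ by subtracting the shift.

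First, I would fix $\omega\in\R$ such that $\E_\omega$ is convex, and apply Corollary~\ref{cor:extended-convex-functional} to $\E_\omega$ to obtain a convex, proper, lower semicontinuous functional $\Psi:\HH\to\eR$ with $\partial_j\E_\omega \subseteq \partial\Psi$. The natural candidate for the functional on $\HH$ is then
\begin{displaymath}
  \Eh(u) := \Psi(u) - \frac{\omega}{2}\|u\|_\HH^2,
\end{displaymath}
which is automatically proper, lower semicontinuous (since $\|\cdot\|_\HH^2$ is continuous), and satisfies $\Eh + \frac{\omega}{2}\|\cdot\|_\HH^2 = \Psi$ convex, yielding the asserted ``elliptic'' property on $\HH$.

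To see that $\partial_j\E \subseteq \partial\Eh$, I would use the identity established at the start of the proof of Lemma~\ref{lem.easy.ident}: namely, $(u,f)\in\partial_j\E$ with representative $\hat u$ is equivalent to $(u, f+\omega u)\in\partial_j\E_\omega$ with the same representative. Combined with $\partial_j\E_\omega\subseteq\partial\Psi$, this gives $(u, f+\omega u)\in\partial\Psi$. A direct computation from the definition of the classical subgradient shows $\partial\Psi = \partial\Eh + \omega I$, since the removed term $\tfrac{\omega}{2}\|\cdot\|_\HH^2$ is everywhere finite and smooth on $\HH$; hence $(u,f)\in\partial\Eh$, as desired.

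For the second part, I would additionally invoke Theorem~\ref{thm:max-monoton-partial-j-E} to obtain maximal monotonicity of $\partial_j\E$. Since $\omega\geq 0$ may be chosen by $j$-ellipticity, $\omega I + \partial_j\E$ remains maximal monotone. The inclusion from the first part now reads
\begin{displaymath}
  \omega I + \partial_j\E \;\subseteq\; \omega I + \partial\Eh \;=\; \partial\Psi,
\end{displaymath}
and the right-hand side is monotone, so by maximality of the left both inclusions are equalities. Subtracting $\omega I$ gives $\partial_j\E = \partial\Eh$, while the displayed equality itself records that $\omega I + \partial_j\E = \partial\Psi$ is maximal monotone.

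The one point requiring care is the sum rule $\partial(\Psi - \tfrac{\omega}{2}\|\cdot\|_\HH^2) = \partial\Psi - \omega I$; however, because the quadratic is everywhere finite and G\^ateaux differentiable on $\HH$, this follows directly from the definition of the (classical) subgradient and needs no nontrivial constraint qualification.
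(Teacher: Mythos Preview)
Your approach is essentially the paper's: shift to $\E_\omega$, apply Corollary~\ref{cor:extended-convex-functional}, then undo the shift on the $\HH$-side. The first part is fine.

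There is, however, a genuine gap in the second part. You write that you would ``invoke Theorem~\ref{thm:max-monoton-partial-j-E} to obtain maximal monotonicity of $\partial_j\E$''. But Theorem~\ref{thm:max-monoton-partial-j-E} requires $\E$ itself to be \emph{convex}, and in the second part of the corollary $\E$ is only assumed proper, lower semicontinuous and $j$-elliptic; it need not be convex, and in that case $\partial_j\E$ need not even be monotone (take $V=H=\R$, $j=I$, $\E(u)=-\tfrac{1}{2}u^2+\tfrac{1}{4}u^4$). So the theorem does not apply to $\E$, and your subsequent sentence ``$\omega I+\partial_j\E$ remains maximal monotone'' is not justified as stated.

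The repair is immediate and is exactly what the paper does: apply Theorem~\ref{thm:max-monoton-partial-j-E} to $\E_\omega$ instead, since $\E_\omega$ \emph{is} convex (by choice of $\omega$), proper, lower semicontinuous, and $j$-elliptic (it is already convex and coercive, so take the shift $0$). This gives directly that $\partial_j\E_\omega=\omega I+\partial_j\E$ is maximal monotone, after which your maximality argument forcing $\omega I+\partial_j\E=\partial\Psi$ and hence $\partial_j\E=\partial\Eh$ goes through unchanged.
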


\vanish{
\begin{proof}
  By assumption, there exists $\omega\geq 0$ such that $\E_\omega$ is
  convex. It follows easily from the definition
  of the $j$-subgradient that
\begin{equation} \label{eq.translated.gradient}
 \omega I + \partial_j\E = \partial_j\E_\omega .
\end{equation}
The claim follows from this identity and Corollary
\ref{cor:extended-convex-functional} applied to $\E_\omega$.
\end{proof}}

\begin{proof}
  By assumption, there exists $\omega\geq 0$ such that $\E_\omega$ is
  convex. Thus by Lemma~\ref{lem.easy.ident} (a) and by definition of
  the $j$-subgradient of $\E_{\omega}$,
  \begin{equation} \label{eq.translated.gradient} 
    \omega I + \partial_j\E = \partial_j\E_\omega .
  \end{equation}
  Since $\E_{\omega}$ is convex, Corollary
  \ref{cor:extended-convex-functional} implies that there is a convex,
  proper and lower semicontinuous functional
  $\E^{H} : H \to \R\cup\{+\infty\}$ such that
  $\partial_j\E_\omega\subseteq \partial\E^{H}$ and so by
  identity~\eqref{eq.translated.gradient},
  $\partial_j\E \subseteq\partial\Eh-\omega I$ holds. Then the
  functional
  $\tilde{\varphi}^{H}:=\varphi^{H}-\tfrac{\omega}{2}\norm{\cdot}_{H}^{2}$
  defined on $H$ is obviously proper, lower semicontinuous and elliptic
  with subgradient
  $\partial\tilde{\varphi}^{H}=\partial\varphi^{H}-\omega I$.
  Hence, replacing $\E^{H}$ with $\tilde{\E}^{H}$ shows that the first
  statement of the corollary holds. Further, the inclusion
  $\partial_j\E_\omega\subseteq \partial\E^{H}$ means that
  $\partial\E^{H}$ is a monotone extension in $H\times H$ of
  $\partial_j\E_\omega$. The additional assumptions that $\E$ is
  proper, lower semicontinuous and $j$-elliptic imply that
  $\partial_{j}\E_{\omega}$ is a maximal monotone set in $H\times H$
  and hence $\partial_j\E_\omega= \partial\E^{H}$. Using again
  identity~\eqref{eq.translated.gradient}, we obtain that
  $\omega\, I + \partial_j\E=\partial\Eh$ is maximal monotone,
  completing the proof of this corollary.
\end{proof}

\subsection{Elliptic extensions} \label{sec.elliptic-extension}

In order to identify the functional $\Eh$ from Corollaries
\ref{cor:extended-convex-functional} and
\ref{cor.extended-quasiconvex-functional}, it is convenient to consider
first the set $\hat{E}_{u}$ of all {\em elliptic extensions} $\hat{u}\in
D(\E)$ of an element $u\in\HH$, which is defined by
\begin{displaymath}
\hat{E}_u = \Big\{ \hat{u}\in D(\E )\;\Big\vert\; j(\hat{u})=u \text{
    and } \liminf_{t\searrow 0} \frac{\E (\hat{u} + t\hat{v}) - \E
    (\hat{u} )}{t} \geq 0 \text{ for every } \hat{v}\in{\rm
    ker}\, j \Big\}.
\end{displaymath}
By using the limit~\eqref{eq:2} and since $\langle j(\hat{u}) ,
j(\hat{v}) \rangle_\HH=0$ for any $\hat{v}\in{\rm ker}\, j$, we see that
\begin{displaymath}
  \hat{E}_u =\Big\{ \hat{u}\in D(\E )\;\Big\vert\; j(\hat{u})=u \text{ and }
  \liminf_{t\searrow 0} \frac{\E_\omega (\hat{u} + t\hat{v}) - \E_\omega
    (\hat{u} )}{t} \geq 0 \text{ for every } \hat{v}\in{\rm
    ker}\, j \Big\}
\end{displaymath}
for every $u\in H$ and $\omega\in \R$. Thus, if $\E_\omega$ is convex for some
$\omega\in \R$, then
\begin{displaymath}
  \hat{E}_u =\Big\{ \hat{u}\in D(\E )\;\Big\vert\; j(\hat{u})=u \text{ and }
  \E_\omega (\hat{u} + \hat{v}) - \E_\omega (\hat{u} ) \geq 0 \text{ for
    every } \hat{v}\in{\rm
    ker}\, j \Big\}
\end{displaymath}
for every $u\in H$ and by using the fact that for every $\hat{v}\in\kernel j$,
\begin{equation}
  \label{eq.elliptic.extension}
    \E_\omega (\hat{u} + \hat{v})  = \E (\hat{u} + \hat{v})
    + \frac{\omega}{2} \, \| j(\hat{u} + \hat{v})\|_\HH^2 
     = \E (\hat{u} + \hat{v}) + \frac{\omega}{2} \, \| j(\hat{u})
    \|_\HH^2,
\end{equation}
we can conclude that if $\E$ is $j$-semiconvex then
\begin{equation}
  \label{eq:set-of-extension}
  \hat{E}_{u}
  = \Big\{ \hat{u}\in D(\E )\;\Big\vert\; j(\hat{u})=u
  \text{ and } \E (\hat{u} + \hat{v}) - \E (\hat{u} ) \geq 0 \text{ for
    every } \hat{v}\in{\rm ker}\, j \Big\}
\end{equation}
for every $u\in H$.

On the one hand, the set $\hat{E}_{u}$ is motivated by the definition of
the $j$-subgradient $\partial_j\E$. In fact, if $(u,f)\in\partial_j\E$,
and if $\hat{u}\in D(\E )$ is such that $j(\hat{u}) = u$ and
\begin{displaymath}
  \liminf_{t\searrow 0} \frac{\E (\hat{u} + t\hat{v}) -
    \E (\hat{u} )}{t} \geq \langle f,j(\hat{v})\rangle_\HH
  \text{ for every } \hat{v}\in\V ,
\end{displaymath}
as in the definition of $\partial_j\E$, then 
$\hat{u}$ is necessarily an elliptic extension of $u$. Hence, 
\begin{displaymath}
  \partial_j \E  = \Bigg\{ (u,f)\in\HH\times\HH \;\Bigg\vert\;
  \begin{array}[c]{c}
  \exists \hat{u}\in\hat{E}_u \text{ such that for every } \hat{v}\in\V \\[0,1cm]
   \liminf_{t\searrow 0} \frac{\E
    (\hat{u} + t\hat{v}) -\E (\hat{u})}{t} \geq
  \langle f,j(\hat{v})\rangle_\HH
\end{array}
 \bigg\} 
\end{displaymath}
and if $\E_\omega$ is convex for some $\omega\in \R$, then we obtain in
a similar manner to claim~(a) of Lemma~\ref{lem.easy.ident} that
\begin{equation*}
 \partial_j\E  = \Bigg\{ (u,f)\in\HH\times\HH \;\Bigg\vert
 \begin{array}[c]{c}
   \exists\;\hat{u}\in\hat{E}_u \text{ such that for every } 
   \hat{v}\in\V \\[0,1cm] 
  \E_\omega (\hat{u} + \hat{v}) -\E_\omega (\hat{u}) \geq 
  \langle f+\omega j(\hat{u}),j(\hat{v})\rangle_\HH
 \end{array}
 \Bigg\} .
\end{equation*}
In other words, for the identification of the $j$-subgradient
$\partial_j\E (u)$ at a point $u\in H$ we only need to consider elliptic
extensions $\hat{u}\in \hat{E}_{u}$ of $u$ (instead of general
$\hat{u}\in D(\E )$).

Often, these elliptic extensions are obtained as solutions of an
elliptic problem with input data $u$, explaining why we call them {\em
  elliptic extensions}; compare also with Caffarelli and
Silvestre~\cite{CafSilv07}, where this notion was used in a similar
situation.

\begin{lemma}
  \label{lem:energy-constant}
  Let $\V$, $\HH$, $j$ and $\E$ be as above. Then:
\begin{itemize}
\item[(a)] If, for some $\omega\in\R$, the functional $\E_\omega$ is
  convex (resp. coercive, resp. lower semicontinuous), then for every
  $\hat{u}\in\V$ the restriction $\E|_{\hat{u} + \kernel j}$ is convex
  (resp. coercive, resp. lower semicontinuous).
\item[(b)] If $\E$ is $j$-semiconvex  and $u = j(\hat{u})$ for
  some $u\in\HH$ and $\hat{u}\in\V$, then
  \begin{displaymath}
    \hat{E}_u = \Big\{ \hat{v}\in \hat{u}+\kernel j\;\vert\; \hat{v} 
    \text{ minimises } \E|_{\hat{u} + \kernel j} \Big\} .
  \end{displaymath}

\item[(c)] If $\E$ is $j$-semiconvex,  then for every $u\in
  D(\partial_j\E )$ and every $\hat{u}\in \hat{E}_u$ one has
  \begin{displaymath}
    \E (\hat{u} ) = \inf_{j(\hat{v}) = u} \E (\hat{v})
  \end{displaymath}
  In particular, $\E$ is constant on $\hat{E}_u$ for every $u\in H$.
\end{itemize}
\end{lemma}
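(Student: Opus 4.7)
The plan is to handle the three parts in order, exploiting the single observation that $j$ is constant on any translate of $\kernel j$, so that on $\hat{u}+\kernel j$ the functionals $\E$ and $\E_\omega$ differ by the additive constant $\tfrac{\omega}{2}\|j(\hat{u})\|_\HH^2$. This is precisely the identity~\eqref{eq.elliptic.extension} already recorded in the excerpt, and it is the workhorse of the whole argument.

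For part~(a), the above remark means that $\E|_{\hat{u}+\kernel j}$ and $\E_\omega|_{\hat{u}+\kernel j}$ coincide up to an additive constant. Convexity of the restriction is then inherited from convexity of $\E_\omega$ on the affine subspace $\hat{u}+\kernel j$. For lower semicontinuity, the sublevels $\{\E|_{\hat{u}+\kernel j}\le c\}$ are, up to a shift of $c$, the intersections of the sublevels of $\E_\omega$ with $\hat{u}+\kernel j$; the latter set is closed, because weak-to-weak continuity of $j$ forces $\kernel j$ to be weakly closed, hence closed. Coercivity is handled in the same fashion, using that $\hat{u}+\kernel j$ is even weakly closed, so that the intersections sit inside the relatively weakly compact sublevels of $\E_\omega$ and are therefore themselves relatively weakly compact.

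For part~(b) I would invoke the description~\eqref{eq:set-of-extension} of $\hat{E}_u$ available under $j$-semiconvexity. A point $\hat{v}\in D(\E)$ satisfies $j(\hat{v})=u=j(\hat{u})$ precisely when $\hat{v}\in\hat{u}+\kernel j$, in which case $\hat{v}+\kernel j=\hat{u}+\kernel j$, so that the condition ``$\E(\hat{v}+\hat{w})\ge\E(\hat{v})$ for every $\hat{w}\in\kernel j$'' from~\eqref{eq:set-of-extension} is exactly the assertion that $\hat{v}$ minimises $\E$ on the affine subspace $\hat{u}+\kernel j$. Part~(c) is then immediate: if $u\in D(\partial_j\E)$, the reformulation of the $j$-subgradient via elliptic extensions stated just before the lemma guarantees $\hat{E}_u\neq\emptyset$, and by~(b) every $\hat{u}\in\hat{E}_u$ attains $\inf\{\E(\hat{v}): j(\hat{v})=u\}$; in particular this infimum equals $\E(\hat{u})$ and is independent of the chosen elliptic extension, so $\E$ is constant on $\hat{E}_u$. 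I do not expect any real obstacle here: the only bookkeeping subtlety is to keep the role of $\omega$ clean when passing back and forth between $\E$ and $\E_\omega$, which is why I would state and use the ``$\E$ versus $\E_\omega$ differ by a constant on each fibre of $j$'' principle up front.
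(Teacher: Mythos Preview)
Your proposal is correct and follows exactly the route the paper takes: the paper's proof is a one-liner citing \eqref{eq.elliptic.extension} for (a), \eqref{eq:set-of-extension} for (b), and deriving (c) from (b), which is precisely what you do with the details filled in. Your additional care about why $\kernel j$ (hence $\hat{u}+\kernel j$) is weakly closed, needed for the lower semicontinuity and coercivity claims in (a), is a helpful elaboration that the paper leaves implicit.
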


\begin{proof}
  Claim~(a) follows from the trivial
  observation~\eqref{eq.elliptic.extension}, (b) directly
  from \eqref{eq:set-of-extension}, and (c) follows from (b).
\end{proof}

\subsection{Identification of $\Eh$} \label{sec.identification}

We shall now identify the functional $\Eh$ from Corollaries
\ref{cor:extended-convex-functional} and
\ref{cor.extended-quasiconvex-functional} (only up to a constant, of
course). Throughout this section, $\E$ is assumed to be proper and
$j$-semiconvex. For the identification in Theorem~\ref{thm.eh.identification} 
we will asume in addition that $\E$ is in fact convex, lower semicontinuous 
and $j$-elliptic.

Consider first the two functionals $\E_0$, $\E_1 : H\to\eR$ given by
\begin{align*}
  \E_0 (u) & := \inf_{j(\hat{u} ) = u} 
  \E (\hat{u} ) , \text{ and} \\
  \E_1 (u) & := \sup_{U\subseteq \HH
    \text{ open} \atop u\in U} \inf_{j(\hat{v}) \in U} \E (\hat{v})
  \quad (u\in\HH ) .
\end{align*}
By definition of $\E_0$ and by definition of the $j$-subgradient,  
\begin{equation} \label{domain.e5}
 D(\E_0 ) = j(D(\E )) \supseteq D(\partial_j \E ) ,
\end{equation}
and in particular $\E_0 (u)$ is finite for every $u\in D(\partial_j\E )$. Now choose
$(u_0,f_0)\in\partial_j\E$, and consider in addition the functionals
$\E_2$, $\E_3 : H \to\eR$ given by
\begin{align*}
  \E_2 (u) & := \sup \Bigg\{ 
  \sum_{i=0}^{n} \langle f_i , u_{i+1} - u_i \rangle_{\HH}
    + \E_0 (u_0) \Bigg\vert
    \begin{array}[c]{c}
      n\in\N,\;(u_i,f_i) \in\partial_j\E\\
      \text{ for $i=1,\dots,n$, $u_{n+1}=u$}
    \end{array}
    \Bigg\}\\
  \E_3 (u) & := \sup \Big\{ \langle f,u-v\rangle_\HH
  + \E_0 (v) \Big\vert (v,f)\in \partial_j\E \Big\} .
\end{align*}
Note that formally the definition of the functional $\E_2$ depends on
the choice of the pair $(u_0,f_0)$. However, under somewhat stronger 
but natural assumptions on $\E$ it is in fact independent of this choice.  

\begin{theorem}[Identification of $\Eh$ for convex $\E$] \label{thm.eh.identification}
  Assume that $\E$ is convex, proper, lower semicontinuous and 
  $j$-elliptic, and let $\Eh$ be the functional from Corollary
  \ref{cor:extended-convex-functional}. Then we have
  \begin{displaymath}
    \Eh = \E_0 = \E_1 = \E_2 = \E_3 ,
  \end{displaymath}
  where the first equality holds modulo an additive constant, and
  \begin{displaymath}
    D(\Eh ) = j(D(\E )) .
  \end{displaymath}
\end{theorem}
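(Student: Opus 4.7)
My plan is to identify $\Eh$ directly with $\E_0$ modulo an additive constant, and then to show that each of $\E_1$, $\E_2$, $\E_3$ agrees with $\E_0$ on the nose, using the maximal monotonicity of $\partial_j\E$ together with pointwise comparisons at the base point $u_0$. The first task is to verify that $\E_0$ itself is convex, proper and lower semicontinuous on $H$. Convexity is immediate from the linearity of $j$, and properness from the properness of $\E$. The lower semicontinuity of $\E_0$ is the central technical step: given $u_n \to u$ in $H$ along which $\E_0(u_n)$ converges, I would lift each $u_n$ to a near-optimal $\hat u_n \in \V$ with $j(\hat u_n) = u_n$, use the $j$-ellipticity of $\E$ (coercivity of $\E_\omega$) to extract a weakly convergent subnet $\hat u_n \rightharpoonup \hat u$, and invoke the weak-to-weak continuity of $j$ to conclude $j(\hat u) = u$; the weak lower semicontinuity of $\E_\omega$ then delivers $\E_0(u) \leq \liminf_n \E_0(u_n)$. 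In particular $\E_0$ coincides with its own lower semicontinuous envelope on $H$, which is precisely the statement $\E_0 = \E_1$.

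Next I would show $\partial_j\E \subseteq \partial\E_0$. Given $(u,f) \in \partial_j\E$, Lemma~\ref{lem:energy-constant}(c) furnishes an elliptic extension $\hat u \in \hat E_u$ with $\E(\hat u) = \E_0(u)$, and the convex subgradient inequality from Lemma~\ref{lem.easy.ident}(a) (applied with $\omega = 0$, since $\E$ is itself convex) transfers, after taking infima over $\hat w \in \V$ with $j(\hat w) = w$, to the usual subgradient inequality $\E_0(w) - \E_0(u) \geq \langle f, w-u\rangle_\HH$. Since $\partial_j\E$ is maximal monotone by Theorem~\ref{thm:max-monoton-partial-j-E} and $\partial\E_0$ is a monotone extension of it, the two must coincide. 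Combined with Corollary~\ref{cor:extended-convex-functional}, this forces the convex, proper, lower semicontinuous functionals $\E_0$ and $\Eh$ to differ only by an additive constant, and also yields $D(\Eh) = D(\E_0) = j(D(\E))$ since additive constants do not alter effective domains.

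For $\E_0 = \E_2$ and $\E_0 = \E_3$ the strategy is the same in both cases. Telescoping the subgradient inequality for $\E_0$ along the chain $u_0, u_1, \dots, u_{n+1} = u$ (respectively applying it once at the single pair $(v,f)$) gives $\E_2 \leq \E_0$ (respectively $\E_3 \leq \E_0$). In the reverse direction, both $\E_2$ and $\E_3$ are convex, proper and lower semicontinuous as suprema of continuous affine functionals, and a one-step path-extension argument shows that each of their subgradients contains $\partial_j\E$; by maximality both subgradients equal $\partial\E_0$, so $\E_2$ and $\E_3$ each agree with $\E_0$ modulo a constant. That constant is then pinned down at $u_0$: for $\E_3$ the admissible choice $(v,f) = (u_0,f_0)$ forces $\E_3(u_0) = \E_0(u_0)$; for $\E_2$ the trivial path $n=0$, $u_1 = u_0$ gives $\E_2(u_0) \geq \E_0(u_0)$, while the reverse bound uses the cyclic monotonicity of $\partial_j\E$ applied to the closed cycle obtained by appending $(u_0,f_0)$ to a candidate path.

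The main obstacle is the lower semicontinuity of $\E_0$, which is the one place where the joint strength of $j$-ellipticity, weak-to-weak continuity of $j$, and lower semicontinuity of $\E$ all enter simultaneously; once it is in place, everything else reduces to the maximal monotonicity of $\partial_j\E$ and the bookkeeping of constants at $u_0$.
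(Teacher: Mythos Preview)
Your plan is correct and follows essentially the same strategy as the paper: establish that $\E_0$ (equivalently $\E_1$) is convex, proper and lower semicontinuous via the coercivity of $\E_\omega$ and weak-to-weak continuity of $j$; show $\partial_j\E \subseteq \partial\E_0,\partial\E_2,\partial\E_3$; invoke maximal monotonicity (Theorem~\ref{thm:max-monoton-partial-j-E}) to force equality of all the subgradients; apply Rockafellar's uniqueness theorem to conclude equality of the functionals modulo constants; and finally pin the constants at $u_0$. The only organisational difference is that you prove lower semicontinuity of $\E_0$ directly and then observe that $\E_1$ is by definition the lower semicontinuous envelope of $\E_0$ (since $\inf_{j(\hat v)\in U}\E(\hat v)=\inf_{w\in U}\E_0(w)$), whereas the paper first proves $\E_1$ is lower semicontinuous and then establishes $\E_0=\E_1$ by the same compactness extraction---the underlying argument is identical. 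One small redundancy: once you have shown $\E_2\le\E_0$ globally by telescoping, you do not need cyclic monotonicity to get $\E_2(u_0)\le\E_0(u_0)$; it is already a special case.
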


\begin{proof} {\em $1$st step.} We claim that the functionals $\E_1$,
  $\E_2$, and $\E_3$ are convex and lower semicontinuous. The functionals
  $\E_2$ and $\E_3$ are convex and lower semicontinuous because they are
  pointwise suprema of families of continuous, convex functionals. In order 
  to see that $\E_1$ is lower semicontinuous, we show that the superlevel sets $\{
  \E_1 > c\}$ are open for every $c\in\R$. If $c\in\R$ and if $u\in \{
  \E_1 > c\}$, then, by definition of the supremum, there exists an open
  neighbourhood $U$ of $u$ such that
  \begin{displaymath}
    \inf_{j(\hat{v})\in U} \E (\hat{v} ) > c .
  \end{displaymath}
  However, by definition of $\E_1$, this means $U\subseteq \{ \E_1
  >c\}$. Hence, the superlevel set $\{ \E_1 >c\}$ is open for every
  $c\in\R$, and $\E_1$ is lower semicontinuous. Convexity of $\E_1$ is
  shown by restricting the supremum in the definition of $\E_1$ to the
  supremum over {\em convex}, open neighbourhoods $U$ of the origin $0$,
  by replacing the infimum over all $\hat{v}\in V$ satisfying
  $j(\hat{v})\in U$ with the infimum over all $\hat{v}\in V$ satisfying
  $j(\hat{v})\in u+U$, and by using a similar argument as for $\E_2$ and
  $\E_3$.\\[3pt]
\noindent {\em $2$nd step.}
 We prove that 
\[
 \E_0 = \E_1 .
\]
The inequality $\E_0 \geq \E_1$ follows immediately from the definition of both functionals. In order to prove the converse inequality, fix $u$ such that $\E_1 (u) <\infty$ (if $\E_1 (u) = \infty$, then the inequality $\E_0 (u)\leq \E_1 (u)$ is trivial). By definition of $\E_1$ and by choosing a filter of open neighbourhoods of $u$, we find a sequence $(\hat{u}_n )$ in $D(\E )$ such that
  \begin{equation}
   \label{eq:lim-to-E4}
   \begin{split}
     u & = \lim_{n\to\infty} j(\hat{u}_n ) \quad \text{and}\\
     \E_1 (u) & = \lim_{n\to\infty} \E (\hat{u}_n ) .
   \end{split}
  \end{equation}
By assumption, there exists $\omega\geq 0$ such that $\E_\omega$ is
  lower semicontinuous and coercive. The preceding two equalities imply
  that $(\E_\omega (\hat{u}_n ))$ is a convergent and thus bounded
  sequence in $\R$. By coercivity, there exists a weakly convergent
  subnet $(\hat{u}_\alpha )$ of $(\hat{u}_n)$. 
  Let $\hat{u}$ be its weak limit point. Since $j$ is weak-to-weak
  continuous, we have $j(\hat{u}) = u$. By definition of $\E_0$, since $\E$ is lower
  semicontinuous, also with respect to the weak topology, and by the
  second limit in~\eqref{eq:lim-to-E4}, we obtain
  \begin{equation*}
    \E_0 (u) \leq \E (\hat{u} ) \leq \liminf_{\alpha} \E(\hat{u}_\alpha )= \E_1(u)
    <\infty .
  \end{equation*}

 \noindent {\em $3$rd step.} We show that 
\begin{align}
\label{step3.1} \E_0 (u) & = \E_3 (u) \text{ for every } u\in D(\partial_j\E ) , \\
\label{step3.2} \E_0 (u) & \geq \langle f,u-v\rangle + \E_0 (v) \text{ for every } u\in H, (v,f)\in \partial_j\E , \text{ and} \\
\label{step3.3} \E_3 (u) & \geq \langle f,u-v\rangle + \E_3 (v) \text{ for every } u\in H, (v,f)\in \partial_j\E .
\end{align}
Fix $u\in D(\partial_j\E )$. The inequality $\E_3 (u) \geq \E_0 (u)$
follows by taking $v=u$ in the supremum in the definition of
$\E_3$. Now, let $u\in D(\E_{0})$ and $(v,f)\in \partial_j\E$. By the
definition of the $j$-subgradient and by Lemma
\ref{lem:energy-constant} (c), for every $\hat{v}\in\hat{E}_v$ and
every $\hat{u}\in V$ with $j(\hat{u}) = u$,
\[
 \E (\hat{u} ) \geq \langle f,u-v\rangle + \E (\hat{v})
 = \langle f,u-v\rangle + \E_0 (v) .
\]
Taking the infimum on the left-hand side of this inequality over all $\hat{u}\in V$ with $j(\hat{u}) = u$, we obtain \eqref{step3.2}. Taking then the supremum on the right-hand side of the inequality \eqref{step3.2} over all $(v,f)\in\partial_j\E$, we obtain
\begin{equation}
  \label{eq:3}
 \E_0 (u) \geq \E_3 (u).
\end{equation}
Since $D(\partial_j\E )\subseteq D(\E_{0})$ (see \eqref{domain.e5}), we obtain that
equality \eqref{step3.1} holds for $u\in D(\partial_j\E )$. The
inequality \eqref{step3.3} follows from the definition of $\E_3$ and inequality~\eqref{eq:3}.

\noindent {\em $4$th step.} We have
\begin{align*}
 \partial_j \E & \subseteq \partial\Eh , \\
 \partial_j \E & \subseteq \partial\E_0 , \\
 \partial_j \E & \subseteq \partial\E_2 , \text{ and} \\
 \partial_j \E & \subseteq \partial\E_3 .
\end{align*}
The first inclusion follows from Corollary \ref{cor:extended-convex-functional}, and the third inclusion from the proof of~\cite[Th\'eor\`eme 2.5, p.38]{Br73} and Lemma \ref{thm:gradient-cyclic}. The second and the fourth inclusion follow from \eqref{step3.2} and \eqref{step3.3}, respectively. By Theorem \ref{thm:max-monoton-partial-j-E}, the $j$-subgradient on the left-hand side of these four inclusions is maximal monotone, that is, it has no proper monotone extension. On the other hand, the subgradients on the right-hand sides are monotone by Step 1, Step 2 and Lemma \ref{thm:gradient-cyclic}. We thus conclude that
  \begin{equation*}
    \partial_j\E = \partial\Eh = \partial\E_2 = \partial\E_3 = \partial\E_0 (=\partial\E_1 ).
  \end{equation*}
  Since the functions $\Eh$, $\E_{0} = \E_1$, $\E_2$ and $\E_3$ are convex, proper and
  lower semicontinuous, we can deduce by applying~\cite[Theorem 3]{Rc66}
  that
  \begin{displaymath}
    \Eh = \E_0 = \E_1 = \E_2 = \E_3
  \end{displaymath}
  modulo an additive constant. By Steps 2 and 3, and since $\E_2 (u_0) = \E_3 (u_0)$, the equalities $\E_0 = \dots = \E_3$ hold without adding a constant. The equality $D(\Eh ) = j(D(\E ))$ follows from \eqref{domain.e5}, and we have
  proved the claim.
\end{proof}

By using again the equality \eqref{eq.translated.gradient} as in the
proof of Corollary \ref{cor.extended-quasiconvex-functional}, we obtain
immediately the following corollary to Theorem
\ref{thm.eh.identification}.

\begin{corollary}[Identification of $\Eh$ for $j$-elliptic $\E$] \label{cor.eh.identification}
  Assume that $\E$ is proper, lower semicontinuous and $j$-elliptic, and let
  $\Eh$ be the functional from Corollary
  \ref{cor.extended-quasiconvex-functional}. Then one has
  \begin{displaymath}
    \Eh = \E_0 = \E_1 ,
  \end{displaymath}
  where the first equality holds modulo an additive constant, and
  \begin{displaymath}
    D(\Eh ) = j(D(\E )) .
  \end{displaymath}
\end{corollary}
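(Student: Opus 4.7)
The plan is to reduce to Theorem~\ref{thm.eh.identification} via the shift identity~\eqref{eq.translated.gradient}, exactly as in the proof of Corollary~\ref{cor.extended-quasiconvex-functional}. Since $\E$ is proper, lower semicontinuous and $j$-elliptic, I would fix $\omega \geq 0$ such that $\E_\omega$ is convex and coercive. A short check shows that $\E_\omega$ inherits properness (the effective domain is unchanged), lower semicontinuity (the map $\hat{u} \mapsto \|j(\hat{u})\|_\HH^2$ is weakly lower semicontinuous on $\V$ because $j$ is weak-to-weak continuous and $\|\cdot\|_\HH^2$ is weakly lsc on $\HH$, so it is also lsc in the original topology of $\V$), and is trivially $j$-elliptic. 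Hence Theorem~\ref{thm.eh.identification} applies to $\E_\omega$ and produces a convex, proper, lsc functional $\psi^\HH$ on $\HH$ with
\[
\psi^\HH = (\E_\omega)_0 = (\E_\omega)_1
\]
(the first equality modulo an additive constant), together with $D(\psi^\HH) = j(D(\E_\omega)) = j(D(\E))$, where $(\E_\omega)_0$ and $(\E_\omega)_1$ denote the functionals from Section~\ref{sec.identification} built from $\E_\omega$ in place of $\E$.

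Next I would relate $(\E_\omega)_0$ and $(\E_\omega)_1$ to $\E_0$ and $\E_1$. The identity $(\E_\omega)_0 = \E_0 + \tfrac{\omega}{2}\|\cdot\|_\HH^2$ is immediate, because the constraint $j(\hat{u}) = u$ inside the defining infimum fixes $\|j(\hat{u})\|_\HH = \|u\|_\HH$, and the quadratic term factors out as the constant $\tfrac{\omega}{2}\|u\|_\HH^2$. The companion identity $(\E_\omega)_1 = \E_1 + \tfrac{\omega}{2}\|\cdot\|_\HH^2$ requires a brief argument: restricting the outer supremum to open neighbourhoods $U$ of $u$ contained in the $\varepsilon$-ball around $u$, every $\hat{v}$ with $j(\hat{v}) \in U$ satisfies $\|j(\hat{v})\|_\HH^2 = \|u\|_\HH^2 + O(\varepsilon)$ by continuity of the norm on $\HH$, so the quadratic term can be pulled through the inner infimum up to an error $O(\varepsilon)$; letting $\varepsilon \to 0$ yields the identity.

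Finally, the construction of $\Eh$ in the proof of Corollary~\ref{cor.extended-quasiconvex-functional} gives $\Eh = \psi^\HH - \tfrac{\omega}{2}\|\cdot\|_\HH^2$. Subtracting this quadratic term from the two identifications above produces
\[
\Eh = \E_0 = \E_1,
\]
with the first equality holding modulo an additive constant, and the domain identity $D(\Eh) = D(\psi^\HH) = j(D(\E))$ is inherited directly. The only real point of care is the $\E_1$ identification in the second paragraph, since the outer supremum over open neighbourhoods in $\HH$ prevents one from simply factoring out the quadratic term as in the $\E_0$ case; everything else is bookkeeping of the translation by the quadratic form, entirely analogous to the proof of Corollary~\ref{cor.extended-quasiconvex-functional}.
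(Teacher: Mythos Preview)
Your proposal is correct and follows exactly the approach indicated by the paper, which merely states that the corollary follows ``immediately'' from Theorem~\ref{thm.eh.identification} ``by using again the equality~\eqref{eq.translated.gradient} as in the proof of Corollary~\ref{cor.extended-quasiconvex-functional}.'' You have simply filled in the details the paper omits: verifying that $\E_\omega$ inherits the hypotheses of Theorem~\ref{thm.eh.identification}, and checking that the quadratic shift passes cleanly through the definitions of $\E_0$ and $\E_1$ (for the latter one could alternatively note that $\E_1$ is the lower semicontinuous envelope of $\E_0$ on $\HH$, and that adding a continuous function commutes with taking the lsc envelope).
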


\subsection{The case when $j$ is a weakly closed operator} \label{sec.j.weakly.closed}

We shall now briefly discuss a case which is formally more general than
the setting considered up to now. As before, we let $\V$ be a real locally
convex topological vector space and $\HH$ a real Hilbert space. However,
\begin{displaymath}
 j : \V \supseteq D(j) \to \HH
\end{displaymath}
is now merely a weakly closed, linear operator, that is, its \emph{graph}
\begin{displaymath}
 G(j) := \Big\{ (\hat{u} , j(\hat{u} ) ) \;\Big\vert\; \hat{u}\in D(j)\Big\}
\end{displaymath}
is weakly closed in $\V\times\HH$, which is equipped with the natural,
locally convex product topology. The definition of the $j$-subgradient
of a functional $\E : \V\to\eR$ then admits the following
straightforward generalisation:
\begin{displaymath}
  \partial_j\E  := \Bigg\{ (u,f)\in\HH\times\HH \;\Bigg\vert\;
  \begin{array}[c]{c}
    \exists \hat{u}\in\D (\E)\cap D(j)
    \text{ s.t. } j(\hat{u}) = u \text{ and for every } \hat{v}\in D(j) \\[0,1cm]
    \liminf_{t\searrow 0} \frac{\E
    (\hat{u} + t\hat{v}) -\E (\hat{u})}{t} \geq \langle
  f,j(\hat{v})\rangle_\HH
\end{array}
\Bigg\}.
\end{displaymath}
This formally more general setting can however be reduced to the
setting considered up to now; indeed, it suffices to consider the space
\[
 \bar{\V} := G(j) ,
\]
equipped with the natural, locally convex topology induced from $\V\times\HH$, the operator
\begin{align*}
 \bar{j} : \bar{V} & \to \HH , \\
 (\hat{u} , j(\hat{u})) & \mapsto j(\hat{u}) , 
\end{align*}
and the functional
\begin{align*}
 \bar{\E} : \bar{V} & \to \eR , \\
 (\hat{u} , j(\hat{u})) & \mapsto \E (\hat{u} ) .
\end{align*}
Then $\bar{\V}$ is a locally convex topological vector space, and $\bar{j}$ is weak-to-weak
continuous. Moreover, one easily verifies that
\[
 \partial_{\bar{j}} \bar{\E} = \partial_j \E ,
\]
where the subgradient on the left-hand side of this equality is the
$\bar{j}$-subgradient initially defined and studied throughout this section while the subgradient on the
right-hand side of this equality is the $j$-subgradient defined as above, when $j$ is only a
weakly closed, linear operator. Note that it may happen that $\E$ is
proper while $\bar{\E}$ is not; it is therefore convenient to replace the definition and to say that $\E$ is {\em proper} if the {\em effective domain} $D(\E ) \cap D(j)$ is non-empty. On the
other hand, we can make the following simple but useful observations.

\begin{lemma}
  Assume that $\V$, $\HH$, $j$, $\E$, $\bar{\V}$, $\bar{j}$ and
  $\bar{\E}$ are as above. If $\E$ is convex (resp.~coercive, 
  resp.~lower semicontinuous), then the same is true of $\bar{\E}$.
\end{lemma}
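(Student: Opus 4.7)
My approach is to transfer each of the three properties through the identification $\bar{\V} = G(j) \subseteq \V \times \HH$, exploiting that $\bar{\E}$ depends only on the first coordinate while $\bar{\V}$ inherits the natural subspace topology from $\V \times \HH$. A useful preliminary observation is that $G(j)$, being convex (indeed a linear subspace) and weakly closed in $\V \times \HH$ by hypothesis, is also closed in the original topology by the Hahn--Banach theorem, since for convex subsets the original and the weak closures coincide in any locally convex space.

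For \emph{convexity}, the linearity of $j$ makes $G(j)$ a vector subspace, and any convex combination $\lambda(\hat{u}_1, j(\hat{u}_1)) + (1-\lambda)(\hat{u}_2, j(\hat{u}_2))$ in $\bar{\V}$ equals $(\hat{w}, j(\hat{w}))$ with $\hat{w} = \lambda \hat{u}_1 + (1-\lambda)\hat{u}_2$, so that convexity of $\E$ yields
\[
\bar{\E}\bigl(\hat{w}, j(\hat{w})\bigr) = \E(\hat{w}) \leq \lambda \E(\hat{u}_1) + (1-\lambda) \E(\hat{u}_2).
\]
For \emph{lower semicontinuity}, I would identify the sublevel sets as $\{\bar{\E} \leq c\} = G(j) \cap (\{\E \leq c\} \times \HH)$; the hypothesis gives that $\{\E \leq c\}$ is closed in $\V$, which, combined with the closedness of $G(j)$ noted above, shows that the intersection is closed in $\bar{\V}$.

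For \emph{coercivity}, I would take a net in $\{\bar{\E} \leq c\}$ and use the coercivity of $\E$ to extract a subnet whose first coordinate $\hat{u}_\beta$ converges weakly in $\V$ to some $\hat{u}$; the weak closedness of $G(j)$ then forces any weak cluster point $(\hat{u}, h) \in \V \times \HH$ of $(\hat{u}_\beta, j(\hat{u}_\beta))$ to lie in $G(j)$, so that $h = j(\hat{u})$ and the limit sits in $\bar{\V}$. The main obstacle here is to produce such a cluster point in the second coordinate, since the weak topology on $\bar{\V}$ is the restriction of the product weak topology on $\V \times \HH$ and the operator $j$ is not assumed weak-to-weak continuous in this subsection. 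This is the delicate step that must be handled with care, and is the only point where the argument goes appreciably beyond a formal translation through the lift $\hat{u} \mapsto (\hat{u}, j(\hat{u}))$.
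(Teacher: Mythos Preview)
The paper states this lemma without proof, treating it as routine. Your arguments for convexity and lower semicontinuity are correct and are exactly what the authors presumably had in mind; note in passing that for lower semicontinuity you do not actually need $G(j)$ to be closed, since $\{\bar{\E}\leq c\} = \bar{\V}\cap(\{\E\leq c\}\times\HH)$ is already the trace on $\bar{\V}$ of a closed set in $\V\times\HH$.

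For coercivity, however, you have not merely left a gap: the obstacle you identify is fatal, and the assertion as literally stated is false. Take $\V=\HH=\ell^2$, let $j$ be the closed multiplication operator $j((x_n))=(nx_n)$ on its natural domain, and let $\E(\hat u)=\|\hat u\|_{\ell^2}^2$. Then $\E$ is coercive (its sublevels are closed balls, weakly compact by reflexivity), but for $\hat u^{(k)}=k^{-1/2}e_k$ one has $\E(\hat u^{(k)})\leq 1$ while $\|j(\hat u^{(k)})\|=\sqrt{k}\to\infty$, so $\{\bar\E\leq 1\}$ is unbounded in $\V\times\HH$ and cannot be relatively weakly compact. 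Thus coercivity of $\E$ alone does not transfer to $\bar\E$.

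What the paper actually needs, and what does hold, is that \emph{$j$-ellipticity} of $\E$ implies \emph{$\bar j$-ellipticity} of $\bar\E$. Indeed, $\bar\E_\omega((\hat u,j(\hat u)))=\E(\hat u)+\tfrac{\omega}{2}\|j(\hat u)\|_\HH^2$, so a bound on $\bar\E_\omega$ controls both coordinates simultaneously when $\omega>0$: the first via coercivity of $\E_\omega$ on $\V$, the second directly. Relative weak compactness in the product then follows from Tychonoff together with the weak closedness of $G(j)$. So the lemma is slightly misstated; the correct version replaces ``coercive'' by ``$j$-elliptic'' (or assumes the two properties jointly), and with that emendation your strategy goes through.
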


So up to changing the definition of properness and of effective domain,
all results on $j$-subgradients from this section remain true, and the 
same is true for the results below.

\section{Semigroups and invariance of convex sets} \label{sec.invariance}

The main results from Section \ref{sec.subgradient} and the classical
theory of evolution equations governed by subgradients imply the
following well-posedness or generation theorem, which is the starting point 
of this section.

\begin{theorem} \label{thm:generation} Let $\V$ be a real locally convex
  topological vector space, $\HH$ a real Hilbert space and $j:\V\to\HH$ a
  linear, weak-to-weak continuous operator. Let $\E :\V\to\eR$ be
  proper, lower semicontinuous and $j$-elliptic. Then for every initial value
  $u_0\in \overline{D(\Eh)} = \overline{j(D(\E))}$ the gradient
  system
  \begin{equation}
    \label{eq:14}
    \begin{cases}
      \dot u+\partial_{j}\E(u)\ni 0 & \text{on $(0,\infty )$}\\
      \hspace{1,2cm} u(0)=u_{0} &
    \end{cases}
  \end{equation}
  admits a unique solution
  \begin{displaymath}
    u\in C(\R_+ ;\HH ) \cap W^{1,\infty}_{loc} ((0,\infty );\HH )
  \end{displaymath}
  satisfying the differential inclusion \eqref{eq:14} for almost every $t\in
  (0,\infty )$. In particular, this also means $u(t)\in D(\partial_j\E )$ for
  almost every $t\in (0,\infty )$.

  Denoting by $u$ the unique solution corresponding to the initial value 
  $u_0$, setting $S(\cdot )u_0 := u$ defines a strongly continuous 
  semigroup $S = (S(t))_{t\geq 0}$ of nonlinear Lipschitz continuous 
  mappings on $\overline{D(\Eh)}$.
\end{theorem}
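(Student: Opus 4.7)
The plan is to reduce the result to the classical theory of nonlinear semigroups generated by subdifferentials of proper, lower semicontinuous, semi-convex functionals on a Hilbert space, as developed in \cite[Chapter III]{Br73}. The reduction is provided by Corollary~\ref{cor.extended-quasiconvex-functional}: since $\E$ is proper, lower semicontinuous and $j$-elliptic, there exists a proper, lower semicontinuous, elliptic functional $\Eh : \HH \to \eR$ satisfying
\begin{displaymath}
\partial_j \E = \partial \Eh
\end{displaymath}
as subsets of $\HH \times \HH$, with $\omega I + \partial_j\E$ maximal monotone for some $\omega \geq 0$. Corollary~\ref{cor.eh.identification} further identifies $D(\Eh) = j(D(\E))$, so that the initial-value condition $u_0 \in \overline{D(\Eh)} = \overline{j(D(\E))}$ is exactly the classical hypothesis.

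With this identification, the differential inclusion~\eqref{eq:14} becomes the classical gradient system $\dot u + \partial \Eh(u) \ni 0$ on $\HH$, and I would invoke the classical generation theorem for gradient flows of elliptic functionals on Hilbert spaces (see, e.g., Th\'eor\`eme 3.1 and Th\'eor\`eme 3.2 of \cite{Br73}) to produce, for every $u_0 \in \overline{D(\Eh)}$, a unique solution $u \in C(\R_+;\HH) \cap W^{1,\infty}_{loc}((0,\infty);\HH)$ of the inclusion, with $u(t) \in D(\partial\Eh) = D(\partial_j \E)$ for almost every $t > 0$. Setting $S(t)u_0 := u(t)$, the semigroup property $S(t+s)u_0 = S(t)S(s)u_0$ is then a direct consequence of the uniqueness statement.

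For the Lipschitz continuity of $S(t)$ and the strong continuity of $(S(t))_{t \geq 0}$, I would argue as follows. Given two initial data $u_{0,1},u_{0,2} \in \overline{D(\Eh)}$ and the corresponding solutions $u_1,u_2$, differentiating $\tfrac{1}{2}\|u_1(t)-u_2(t)\|_\HH^2$ and exploiting the monotonicity of $\omega I + \partial_j\E$ yields, via Gronwall's inequality, the estimate
\begin{displaymath}
\|S(t)u_{0,1} - S(t)u_{0,2}\|_\HH \leq e^{\omega t}\|u_{0,1} - u_{0,2}\|_\HH,
\end{displaymath}
so each $S(t)$ is Lipschitz continuous on $\overline{D(\Eh)}$; strong continuity in $t$ then follows from the time-continuity of each trajectory combined with this estimate and a density argument in $u_0$. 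The only delicate point is making sure that the classical generation theorem we cite applies in the semi-convex rather than purely convex setting; this is handled either by invoking the corresponding variant in \cite{Br73} directly, or by observing that $\omega I + \partial_j \E$ is the classical subdifferential of the convex, proper, lower semicontinuous functional $\Eh + \tfrac{\omega}{2}\|\cdot\|_\HH^2$ and reducing to that case via a bounded perturbation argument.
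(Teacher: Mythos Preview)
Your proposal is correct and follows essentially the same route as the paper: both reduce to the classical Brezis theory via Corollary~\ref{cor.extended-quasiconvex-functional} and the identification $D(\Eh)=j(D(\E))$, then invoke \cite[Th\'eor\`emes~3.1 and~3.2]{Br73} for well-posedness, regularity and generation. The only point the paper makes explicit that you leave implicit is the equality $\overline{D(\partial\Eh)}=\overline{D(\Eh)}$ (needed because the classical theory produces a semigroup on $\overline{D(\partial\Eh)}$), which follows from \cite[Proposition~2.11]{Br73}.
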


  We call the semigroup $S$ the semigroup {\em generated
    by $(\E ,j)$} and we write $S\sim (\E,j)$. In what follows, it will be
  convenient to assume that $S$ is always defined on the entire Hilbert
  space $\HH$. This can be achieved by replacing $S(t)$ by $S(t)P$, if
  necessary, where $P$ denotes the orthogonal projection onto the
  closed, convex subset $\overline{D(\Eh)}$ of $H$. Note that in this
  way, the semigroup $S$ is in general only strongly continuous for
  $t>0$.

\begin{proof}
  By Corollary \ref{cor.extended-quasiconvex-functional}, the
  $j$-subgradient of $\E$ is equal to the classical subgradient
  of a proper, lower semicontinuous, elliptic functional on $\HH$. Moreover, up
  to adding a multiple of the identity the subgradient is maximal
  monotone. Well-posedness of the gradient system and generation of a
  semigroup on the closure $\overline{D(\partial\Eh)}$ of
  $D(\partial\Eh)$ in $H$ follow from
  \cite[Th\'eor\`eme 3.1]{Br73} while the regularity of solutions is
  stated in \cite[Th\'eor\`eme 3.2]{Br73}. The characterisation of
  $\overline{D(\partial\Eh)}$ used in the statement follows from \cite[Proposition
  2.11]{Br73} and Theorem \ref{thm.eh.identification}.
\end{proof}

In the context of gradient systems governed by $j$-subgradients, one might be interested in 
the lifting of solutions with values in the reference Hilbert space $\HH$ to {\em solutions} with values 
in the energy space $\V$. By a solution in the energy space we mean a function $\hat{u}:\R_+\to\V$ such that $u:= j(\hat{u})$ 
coincides almost everywhere with a solution of the gradient system \eqref{eq:14}. It is always
possible to find such a lifting, since, by Theorem \ref{thm:generation}, problem \eqref{eq:14}
admits a solution $u$ taking values in $D(\partial_j\E )$ almost everywhere. Now it suffices,
for almost every $t\in\R_+$, to choose an elliptic extension $\hat{u} (t)\in E_{u(t)} \not= \emptyset$. The 
measurability or -- in Banach spaces -- the integrability questions which arise in this
context, will not be discussed here. We only mention that if there exists $\omega\in\R$ such that 
$\E_\omega$ is {\em strictly} convex, or if $\E$ is strictly convex in each affine subspace
$\hat{v} +\kernel j$, then the sets $E_{u(t)}$ are singletons, and thus the solution $\hat{u}$
in the energy space is uniquely determined.\\

We point out that among evolution equations governed by maximal
monotone operators, gradient systems play a prominent role which is
comparable to the role of evolution equations governed by self-adjoint
linear operators among the class of all linear evolution
equations. Gradient systems exhibit a regularising effect in the sense
that the solution to an arbitrary initial value immediately moves into
the domain of the subgradient (see Theorem \ref{thm:generation}
above). Moreover, the non-autonomous gradient system
\begin{equation*}
  \begin{cases}
    \dot u+\partial_{j}\E(u)\ni f & \text{on $(0,\infty )$}\\
    \hspace{1,2cm} u(0)=u_{0} & 
  \end{cases}
\end{equation*}
has $L^2$-maximal regularity in the sense that for every initial value
$u_0\in D(\Eh ) = j(D(\E ))$ and every right-hand side $f\in L^2_{loc}
(\R_+ ;\HH )$ there exists a unique solution $u\in W^{1,2}_{loc}(\R_+ ;\HH )$
satisfying the differential inclusion almost everywhere \cite[Th\'eor\`eme 3.6]{Br73}. These 
well-known facts are fundamental for the corresponding solution theory, 
but are not the central focus of the present article.\\
  
The purpose of the rest of this section is to collect some qualitative
results for the semigroup $S$ generated by $(\E,j)$ under the
additional assumption that the energy functional $\E$ is convex. In this
case, $S$ is a semigroup of contractions \cite[Th\'eor\`eme
3.1]{Br73}. We first characterise invariance of closed, convex sets
under the semigroup generated by $(\E,j)$ in terms of the
functional $\E$. We then apply this abstract result in order to
characterise positive semigroups, a comparison principle for two
semigroups, order preserving semigroups, domination of semigroups,
$L^\infty$-contractivity of semigroups and extrapolation, in the case 
when the underlying Hilbert space $\HH$ is of the form $L^2(\Sigma)$ for 
a suitable measure space $\Sigma$. Similar results are known in the 
literature for semigroups
generated by classical subgradients; see Barth\'elemy \cite{By96} 
(except for the extrapolation result), and indeed, the following results 
will be obtained as a consequence of the results in the literature 
together with our identification theorem
(Theorem \ref{thm.eh.identification}). This is, for example, the case for
the next theorem, which extends~\cite[Th\'eor\`eme~1.1]{By96}.

We say that a not necessarily densely defined, nonlinear operator $S$ on
the Hilbert space $\HH$ {\em leaves a subset $C\subseteq\HH$ invariant}
if $SC\subseteq C$. Accordingly, we say that a semigroup $S$ leaves $C$
invariant if $S(t)$ leaves $C$ invariant for every $t\geq 0$.

\begin{theorem}
  \label{thm:beurling}
  Assume that $\E$ is convex, proper, lower semicontinuous and $j$-elliptic, and let $S$ be the
  semigroup on $\HH$ generated by $(\E,j)$.  Let $C\subseteq\HH$ be a
  closed, convex set, and denote by $P_C$ the orthogonal projection of 
  $\HH$ onto $C$. Then the following assertions are equivalent:
  \begin{enumerate}  
   \item\label{thm:beurling-claim-3} The semigroup $S$ leaves $C$ invariant.
   \item\label{thm:beurling-claim-4} For every $\lambda >0$ the
     resolvent $J_\lambda$ of $\partial_j\E$ leaves $C$ invariant.
   \item\label{thm:beurling-claim-2} For every $u\in H$ one has
     \begin{displaymath}
       \Eh(P_{C}u)\le \Eh(u).
     \end{displaymath} 
   \item\label{thm:beurling-claim-1} For every $\hat{u}\in D(\E)$ there is
    a $\hat{v}\in D(\E)$ such that $P_{C} j(\hat{u}) =j(\hat{v})$ and
    \begin{displaymath}
      \E (\hat{v} ) \leq \E (\hat{u}).
    \end{displaymath} 
  \end{enumerate}
\end{theorem}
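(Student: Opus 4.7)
The plan is to reduce the equivalence of \eqref{thm:beurling-claim-3}--\eqref{thm:beurling-claim-2} to the classical Beurling--Barth\'elemy invariance theorem for subgradients by identifying $\partial_j\E$ with a classical subgradient on $\HH$, and then to translate between \eqref{thm:beurling-claim-2} and \eqref{thm:beurling-claim-1} using Theorem \ref{thm.eh.identification}.

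First I would combine Corollary \ref{cor:extended-convex-functional}, which produces a convex, proper, lower semicontinuous functional $\Eh:\HH\to\eR$ with $\partial_j\E\subseteq\partial\Eh$, with Theorem \ref{thm:max-monoton-partial-j-E}, which asserts that $\partial_j\E$ is maximal monotone. Since $\partial\Eh$ is itself monotone and extends $\partial_j\E$, maximality forces the equality $\partial_j\E=\partial\Eh$. Consequently, the semigroup $S\sim(\E,j)$ coincides with the classical semigroup generated by $-\partial\Eh$, its resolvents are those of $\partial\Eh$, and \cite[Th\'eor\`eme 1.1]{By96} applied to $\Eh$ directly yields the equivalence of \eqref{thm:beurling-claim-3}, \eqref{thm:beurling-claim-4} and \eqref{thm:beurling-claim-2}.

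The genuinely new content is the equivalence \eqref{thm:beurling-claim-2} $\Leftrightarrow$ \eqref{thm:beurling-claim-1}. By Theorem \ref{thm.eh.identification}, $\Eh(u)=\E_0(u):=\inf\{\E(\hat u):\hat u\in D(\E),\ j(\hat u)=u\}$ modulo an additive constant (which does not affect the comparison in \eqref{thm:beurling-claim-2}), and $D(\Eh)=j(D(\E))$. The direction \eqref{thm:beurling-claim-1} $\Rightarrow$ \eqref{thm:beurling-claim-2} is then immediate: for $u\in\HH$ with $\Eh(u)<\infty$ and any $\hat u\in D(\E)$ with $j(\hat u)=u$, \eqref{thm:beurling-claim-1} furnishes $\hat v\in D(\E)$ with $j(\hat v)=P_Cu$ and $\E(\hat v)\leq\E(\hat u)$, so $\Eh(P_Cu)\leq\E(\hat v)\leq\E(\hat u)$, and taking the infimum over $\hat u$ yields $\Eh(P_Cu)\leq\Eh(u)$. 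Conversely, for \eqref{thm:beurling-claim-2} $\Rightarrow$ \eqref{thm:beurling-claim-1}, given $\hat u\in D(\E)$ and $u:=j(\hat u)$, condition \eqref{thm:beurling-claim-2} gives $\Eh(P_Cu)\leq\Eh(u)\leq\E(\hat u)<\infty$, so $P_Cu\in j(D(\E))$; it then suffices to exhibit some $\hat v\in D(\E)$ with $j(\hat v)=P_Cu$ realising $\E(\hat v)=\Eh(P_Cu)$.

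The attainment of this infimum is the main obstacle, and it rests on $j$-ellipticity. Choose $\omega\geq 0$ such that $\E_\omega$ is convex, lower semicontinuous and coercive. The fiber $j^{-1}(P_Cu)$ is weakly closed in $\V$ by weak-to-weak continuity of $j$; moreover $\|j(\cdot)\|_\HH$ is constant on this fiber, so $\E$ and $\E_\omega$ differ by an additive constant there, and hence $\E$ inherits convexity, lower semicontinuity and coercivity on the fiber (exactly as in Lemma \ref{lem:energy-constant}(a), applied to a fixed base point $\hat u_0\in j^{-1}(P_Cu)$). A standard weak compactness argument on a decreasing sequence of sublevels then produces the required minimizer $\hat v$, which by \eqref{thm:beurling-claim-2} satisfies $\E(\hat v)=\Eh(P_Cu)\leq\E(\hat u)$, completing the reduction.
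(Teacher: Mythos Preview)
Your proposal is correct and follows essentially the same route as the paper: reduce \eqref{thm:beurling-claim-3}--\eqref{thm:beurling-claim-2} to Barth\'elemy's classical result via the identification $\partial_j\E=\partial\Eh$, and bridge \eqref{thm:beurling-claim-2} $\Leftrightarrow$ \eqref{thm:beurling-claim-1} using $\Eh=\E_0$ together with attainment of the infimum in $\E_0$. The only difference is cosmetic: the paper simply quotes the attainment (established in Step~2 of the proof of Theorem~\ref{thm.eh.identification}) rather than reproving it, and in the direction \eqref{thm:beurling-claim-1} $\Rightarrow$ \eqref{thm:beurling-claim-2} you take an infimum over all $\hat u$ where the paper picks a minimiser directly---both work.
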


\begin{proof}
  The equivalence between the
  assertions~\eqref{thm:beurling-claim-3},~\eqref{thm:beurling-claim-4} and 
  \eqref{thm:beurling-claim-2} follows
  from~\cite[Proposition~4.5]{Br73}
  and~\cite[Th\'eor\`eme~1.1]{By96}; we wish to prove that
  \eqref{thm:beurling-claim-2} and \eqref{thm:beurling-claim-1} are
  equivalent. Without loss of generality, we may assume that the
  equalities in Theorem \ref{thm.eh.identification} hold without adding a
  constant to $\Eh$, that is, in particular, $\Eh = \E_0$.

  Suppose \eqref{thm:beurling-claim-1} holds and take
  $u\in H$ such that $\Eh(u)$ is finite (otherwise \eqref{thm:beurling-claim-2} is
  obviously true). By the characterisation of $\Eh$
  (Theorem~\ref{thm.eh.identification}), and the fact that the infimum
  in the definition of $\E_{0}$ is a minimum, there is a $\hat{u}\in
  D(\E)$ such that $j(\hat{u})=u$ and $\Eh(u)=\E_{0}(u)=\E(\hat{u})$. In
  addition, we can deduce from the hypothesis that there is a
  $\hat{v}\in D(\E)$ satisfying $j(\hat{v})=P_{C}u$ and
  \begin{displaymath}
    \E(\hat{v}) \le \E(\hat{u}). 
  \end{displaymath}
 Applying again
 Theorem~\ref{thm.eh.identification} yields
 \begin{displaymath}
   \Eh(P_C u)=\E_{0}(P_{C}u)\le \E(\hat{v}) \le\E(\hat{u})=\Eh(u) , 
 \end{displaymath}
 and so we have proved \eqref{thm:beurling-claim-2}.

 Conversely, suppose that \eqref{thm:beurling-claim-2} is true.
 Let $\hat{u}\in D(\E)$ such that
 $j(\hat{u})=u$. Then the hypothesis, Theorem~\ref{thm.eh.identification}, 
 and the fact that the infimum in
 the definition of $\E_{0}$ is a minimum imply that there
 is a $\hat{v}\in D(\E)$ such that $j(\hat{v})=P_{C}u$ and 
 \begin{displaymath}
   \E(\hat{v})=\Eh(P_{C}u)\le \Eh(u)\le \E(\hat{u}).
 \end{displaymath}
 This proves that \eqref{thm:beurling-claim-1} is true and thus
 completes the proof.
\end{proof}

The next theorem is equivalent to Theorem~\ref{thm:beurling} and
extends~\cite[Th\'eor\`eme 1.9]{By96}.

\begin{theorem}
  \label{thm:beurling-version-2}
  Assume that $\E$ is convex, proper, lower semicontinuous and $j$-elliptic, 
  and let $C_{1}$, $C_{2}\subseteq\HH$ be two closed, convex sets such that
  \begin{equation}
    \label{eq:5}
    P_{C_{2}}C_{1}\subseteq  C_{1} ,
  \end{equation}
  where, as before, $P_{C_{2}}$ denotes the orthogonal projection of
  $\HH$ onto $C_{2}$. Suppose that the semigroup $S$ generated by
  $(\E,j)$ leaves $C_{1}$ invariant. Then the following
  assertions are equivalent:
  \begin{enumerate}
  \item\label{thm:beurling-version-2-claim-3} $S(t) (C_{1}\cap
    C_{2})\subseteq C_{2}$ for every $t\ge0$.
  \item\label{thm:beurling-version-2-claim-2} For every $u\in C_{1}$,
    one has
    \begin{displaymath}
      \Eh(P_{C_{2}}u)\le \Eh(u).
    \end{displaymath}
  \item\label{thm:beurling-version-2-claim-1} For every $\hat{u}\in
    D(\E)$ with $j(\hat{u})\in C_{1}$ there is a $\hat{v}\in D(\E)$ such
    that $P_{C_{2}} j(\hat{u}) =j(\hat{v})$ and
    \begin{displaymath}
      \E (\hat{v} ) \leq \E (\hat{u}).
    \end{displaymath}
\end{enumerate}
\end{theorem}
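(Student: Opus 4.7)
The plan is to reduce the theorem to Theorem~\ref{thm:beurling} applied to the closed, convex set $C_{1}\cap C_{2}$, using as a pivotal observation that under the standing hypothesis $P_{C_{2}}C_{1}\subseteq C_{1}$ one has $P_{C_{2}}u = P_{C_{1}\cap C_{2}}u$ for every $u\in C_{1}$. Indeed, $P_{C_{2}}u\in C_{1}\cap C_{2}$ by the hypothesis, and since $C_{1}\cap C_{2}\subseteq C_{2}$, the point $P_{C_{2}}u$ also minimises the distance from $u$ over $C_{1}\cap C_{2}$.

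For the implication \eqref{thm:beurling-version-2-claim-3} $\Rightarrow$ \eqref{thm:beurling-version-2-claim-2}: since $S$ leaves $C_{1}$ invariant, assertion \eqref{thm:beurling-version-2-claim-3} is equivalent to $S(t)(C_{1}\cap C_{2})\subseteq C_{1}\cap C_{2}$ for every $t\geq 0$. Applying Theorem~\ref{thm:beurling} to the closed, convex set $C_{1}\cap C_{2}$ then yields $\Eh(P_{C_{1}\cap C_{2}}u)\leq \Eh(u)$ for every $u\in\HH$; restricting to $u\in C_{1}$ and invoking the pivotal observation gives \eqref{thm:beurling-version-2-claim-2}.

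For the converse \eqref{thm:beurling-version-2-claim-2} $\Rightarrow$ \eqref{thm:beurling-version-2-claim-3}, I would argue at the level of the resolvent $J_{\lambda}$ of $\partial_{j}\E=\partial\Eh$. Since $\Eh$ is convex by Corollary~\ref{cor:extended-convex-functional}, the element $J_{\lambda}u$ is the unique minimiser of $v\mapsto \tfrac{1}{2\lambda}\|v-u\|_{\HH}^{2}+\Eh(v)$. Given $u\in C_{1}\cap C_{2}$, the $C_{1}$-invariance of $J_{\lambda}$ furnished by Theorem~\ref{thm:beurling} gives $J_{\lambda}u\in C_{1}$, so $w:=P_{C_{2}}J_{\lambda}u$ lies in $C_{1}\cap C_{2}$ by the standing hypothesis. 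Since $u\in C_{2}$ we have $\|w-u\|_{\HH}\leq \|J_{\lambda}u-u\|_{\HH}$, and \eqref{thm:beurling-version-2-claim-2} gives $\Eh(w)\leq \Eh(J_{\lambda}u)$; uniqueness of the minimiser then forces $w=J_{\lambda}u$, so $J_{\lambda}u\in C_{2}$. A final appeal to Theorem~\ref{thm:beurling} (equivalently, the exponential formula $S(t)u=\lim_{n}J_{t/n}^{n}u$) yields \eqref{thm:beurling-version-2-claim-3}.

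The equivalence \eqref{thm:beurling-version-2-claim-2} $\Leftrightarrow$ \eqref{thm:beurling-version-2-claim-1} then proceeds exactly as the corresponding step in the proof of Theorem~\ref{thm:beurling}, using the identification $\Eh=\E_{0}=\inf_{j(\hat{u})=u}\E(\hat{u})$ from Theorem~\ref{thm.eh.identification} together with the attainment of this infimum to convert the functional inequality on $\Eh$ into the existence statement on $\E$, and vice versa. The chief obstacle is the asymmetry in \eqref{thm:beurling-version-2-claim-2}: the functional inequality is demanded only for $u\in C_{1}$, whereas Theorem~\ref{thm:beurling} applied to $C_{1}\cap C_{2}$ naturally produces an inequality valid on all of $\HH$, and the naive attempt to extend \eqref{thm:beurling-version-2-claim-2} from $C_{1}$ to $\HH$ by projecting first onto $C_{1}$ fails because in general $P_{C_{2}}\circ P_{C_{1}}\neq P_{C_{1}\cap C_{2}}$. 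The resolvent argument bypasses this difficulty by invoking \eqref{thm:beurling-version-2-claim-2} only at the concrete point $J_{\lambda}u$, which lies in $C_{1}$ by the invariance hypothesis.
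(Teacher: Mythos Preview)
Your argument is correct, and for the equivalence \eqref{thm:beurling-version-2-claim-2} $\Leftrightarrow$ \eqref{thm:beurling-version-2-claim-1} it coincides exactly with what the paper does. For the equivalence \eqref{thm:beurling-version-2-claim-3} $\Leftrightarrow$ \eqref{thm:beurling-version-2-claim-2}, however, the paper does not give a self-contained argument at all: it simply invokes \cite[Th\'eor\`eme~1.9]{By96}, where this equivalence is proved directly for the classical functional $\Eh$. Your route is genuinely different in that you supply an internal proof, reducing everything to Theorem~\ref{thm:beurling} applied to $C_{1}\cap C_{2}$ via the key observation $P_{C_{2}}|_{C_{1}} = P_{C_{1}\cap C_{2}}|_{C_{1}}$ and the resolvent-minimisation trick. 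This is precisely the ``little bit more effort'' the paper alludes to in the sentence preceding its proof (where it remarks that Theorem~\ref{thm:beurling} implies Theorem~\ref{thm:beurling-version-2}) but does not spell out. The advantage of your approach is that it keeps the argument self-contained within the paper's own framework and makes the reduction to Theorem~\ref{thm:beurling} explicit; the paper's approach is shorter but exports the work to Barth\'elemy.
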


Indeed, if we take $C_{1}=H$ then we see that Theorem~\ref{thm:beurling}
is a special case of Theorem~\ref{thm:beurling-version-2}. However, with
a little bit more effort we also see that Theorem~\ref{thm:beurling}
implies Theorem~\ref{thm:beurling-version-2}.

\begin{proof}
  The equivalence between assertions
  \eqref{thm:beurling-version-2-claim-3}
  and~\eqref{thm:beurling-version-2-claim-2} follows
  from~\cite[Th\'eor\`eme~1.9]{By96} and the equivalence between
  \eqref{thm:beurling-version-2-claim-2}
  and~\eqref{thm:beurling-version-2-claim-1} is shown by using the same
  arguments as given in the proof of Theorem~\ref{thm:beurling}.
\end{proof}

\subsection{Positive semigroups} \label{sec.positive}

Throughout the rest of this section, $(\Sigma, {\mathcal B} ,\mu )$ is a
measure space and the underlying Hilbert space is $\HH = L^2 (\Sigma
)$. This Hilbert space is equipped with the natural ordering, the
positive cone $L^2 (\Sigma )^+$ being the set of all elements which are
positive almost everywhere, which turns it into a Hilbert lattice. The
lattice operations are denoted as usual, that is, we write $u\vee v$ and
$u\wedge v$ for the supremum and the infimum, respectively, $u^+ = u\vee
0$ is the positive part, $u^- = (-u)\vee 0$ the negative part, and $|u|
= u^+ + u^-$ the absolute value of an element $u\in L^2 (\Sigma )$.

We say that a semigroup $S$ on $L^2 (\Sigma )$ is {\em positive} if
$S(t) u \geq 0$ for every $u\geq 0$ and every $t\geq 0$. In other words,
the semigroup $S$ is positive if and only if $S$ leaves the closed 
positive cone $C:= L^2 (\Sigma )^+$ invariant. Since the positive cone
is also convex, and since the projection onto this cone is given by
\[
 P_{L^2(\Sigma )^+} u = u^+ ,
\] 
we immediately obtain from Theorem \ref{thm:beurling} the following
characterisation of positivity.

\begin{theorem}[Positive semigroups] \label{thm:positive-semigroups}
 Assume that $\E$ is convex, proper, lower semicontinuous and $j$-elliptic, that
$j(D(\E ))$ is dense in $\HH = L^2 (\Sigma )$, let $S$ be the semigroup on 
$L^2  (\Sigma )$ generated by $(\E,j)$. 
Then the following assertions are equivalent:
\begin{enumerate}
\item \label{thm:positive-semigroups-claim-3} The semigroup $S$ is
  positive.
\item \label{thm:positive-semigroups-claim-2} For every $u\in L^2
  (\Sigma )$ one has
\[
 \Eh (u^+) \leq \Eh (u) .
\]
\item \label{thm:positive-semigroups-claim-1} For every $\hat{u}\in D(\E
  )$ there is a $\hat{v}\in D(\E )$ such that $j(\hat{u})^+ =
  j(\hat{v})$ and
\[
 \E (\hat{v} ) \leq \E (\hat{u} ) .
\]
\end{enumerate}
\end{theorem}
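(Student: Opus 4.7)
The plan is to identify this as a direct application of Theorem~\ref{thm:beurling} with the particular choice $C := L^2(\Sigma)^+$. First I would record that $C$ is a closed convex subset of the Hilbert lattice $L^2(\Sigma)$, and that a standard computation (or the Riesz decomposition $u = u^+ - u^-$ with $\langle u^+, u^-\rangle_H = 0$) shows that the orthogonal projection of $\HH$ onto $C$ is precisely the positive-part map
\begin{displaymath}
  P_C u = u^+, \qquad u\in L^2(\Sigma).
\end{displaymath}
Thus the statement ``$S$ is positive'' is literally the statement ``$S$ leaves $C$ invariant'' in the sense of Theorem~\ref{thm:beurling}.

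Second, I would note that the density assumption $\overline{j(D(\E))}=L^2(\Sigma)$ is being used to ensure that $\overline{D(\Eh)}=L^2(\Sigma)$ by Theorem~\ref{thm.eh.identification}, so that the semigroup $S$ from Theorem~\ref{thm:generation} really is defined on all of $L^2(\Sigma)$ and the phrase ``$S(t)u\geq 0$ for every $u\geq 0$'' makes sense without any extension convention. With this in place, assertions (1), (2), (3) of the present theorem are exactly the assertions (i), (iii), (iv) of Theorem~\ref{thm:beurling} applied to $C = L^2(\Sigma)^+$ (the resolvent version (ii) of Theorem~\ref{thm:beurling} yields a fourth equivalent statement, namely that every resolvent $J_\lambda$ of $\partial_j\E$ preserves positivity, which however is not recorded in the claim).

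The proof therefore reduces to two lines: verify that $P_{L^2(\Sigma)^+}u = u^+$, then invoke Theorem~\ref{thm:beurling}. There is essentially no obstacle — the only thing requiring a moment's care is checking that Theorem~\ref{thm:beurling} is applicable with the semigroup interpretation as stated, which is guaranteed by the density of $j(D(\E))$ in $\HH$ together with the hypotheses on $\E$ (convex, proper, lower semicontinuous and $j$-elliptic), which are exactly the hypotheses of Theorem~\ref{thm:beurling}. No further analysis of $\E$ or of the lattice structure beyond the identification $P_C = (\,\cdot\,)^+$ is required.
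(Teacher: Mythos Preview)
Your proposal is correct and takes essentially the same approach as the paper: the paper simply observes that positivity of $S$ is precisely invariance of the closed convex cone $C=L^2(\Sigma)^+$, identifies the orthogonal projection onto $C$ as $u\mapsto u^+$, and then reads off the three equivalences directly from Theorem~\ref{thm:beurling}. Your additional remark about the density hypothesis ensuring $\overline{D(\Eh)}=L^2(\Sigma)$ is a reasonable clarification, though the paper leaves this implicit.
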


\subsection{Comparison and domination of semigroups} \label{sec.comparison}

\begin{theorem}[Comparison of
  semigroups] \label{thm:comparison-property} Let $\V_1$ and $\V_2$ be
  two real locally convex topological vector spaces, $H=L^{2}(\Sigma)$ and let $j_{1} : \V_1\to
  L^2 (\Sigma )$ and $j_{2} : \V_2\to L^2 (\Sigma )$ be two linear
  operators which are weak-to-weak continuous. Further, let $\E_{1} :
  \V_1\to \R\cup\{+\infty\}$ and $\E_{2}: \V_2\to \R\cup\{+\infty\}$ be
  two convex, proper functionals, which are, respectively, $j_{1}$- and
  $j_{2}$-elliptic, assume that $j_1 (D(\E_1))$ and $j_2 (D(\E_2 ))$ are dense in $L^2 (\Sigma )$,
  and let $S_1$ and $S_2$ be the semigroups on $L^2 (\Sigma )$ generated
  by $(\E_{1},j_{1})$ and $(\E_{2},j_{2})$,
  respectively. In addition, suppose that $C\subseteq L^2 (\Sigma )$ is a closed,
  convex set satisfying
  \begin{equation}\label{eq:6}
    u\wedge v\in C\quad\text{and}\quad u\vee v \in C\quad
    \text{for every $u$, $v\in C$}
  \end{equation}
  and that the semigroups $S_1$ and $S_2$ leave $C$ invariant. Then the
  following assertions are equivalent:
  \begin{enumerate}
  \item\label{thm:comparison-property-claim-1} For every $u$, $v\in C$
    with $u\le v$ one has $S_1(t)u\le S_2(t)v$ for every $t\ge0$.
  
  \item\label{thm:comparison-property-claim-2} For every $u_{1}$, 
    $u_{2}\in C$ one has
    \begin{equation*}
      \label{ineq:comparison-with-Eh-on-H}
      \Eh_{1}(u_{1}\wedge u_{2})+\Eh_{2}(u_{1}\vee u_{2})
      \le \Eh_{1}(u_{1})+\Eh_{2}(u_{2}).
    \end{equation*}
  
  \item\label{thm:comparison-property-claim-3} For every $\hat{u}_1\in
    D(\E_{1})$, $\hat{u}_2\in D(\E_{2})$ with $u_1:=j_1(\hat{u}_1)\in
    C$ and $u_2:= j_2 (\hat{u}_2 )\in C$, there are $\hat{v}_1\in D(\E_{1})$,
    $\hat{v}_2\in D(\E_{2})$ such that $u_1\wedge u_2=j_1(\hat{v}_1)$,
    $u_1\vee u_2=j_2(\hat{v}_2)$ and
    \begin{equation*}
      \label{ineq:comparision-with-E-on-V}
      \E_{1}(\hat{v}_1)+\E_{2}(\hat{v}_2)
      \le \E_{1}(\hat{u}_1) + \E_{2}(\hat{u}_2).
    \end{equation*}
  \end{enumerate}
\end{theorem}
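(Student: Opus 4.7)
The plan is to deduce the theorem from Theorem~\ref{thm:beurling-version-2} by a product-space construction. On the locally convex space $\bar{\V} := \V_1\times \V_2$ and the Hilbert space $\bar{\HH} := L^2(\Sigma)\times L^2(\Sigma)$ (equipped with the natural product inner product), I would introduce the linear, weak-to-weak continuous map $\bar{j}(\hat{u}_1,\hat{u}_2) := (j_1(\hat{u}_1), j_2(\hat{u}_2))$ and the functional $\bar{\E}(\hat{u}_1,\hat{u}_2) := \E_1(\hat{u}_1) + \E_2(\hat{u}_2)$. A routine check shows that $\bar{\E}$ is convex, proper, lower semicontinuous and $\bar{j}$-elliptic: if $(\E_i)_{\omega_i}$ is convex and coercive with $\omega_i \ge 0$, then $\bar{\E}_{\omega}$ with $\omega := \max(\omega_1,\omega_2)$ inherits both properties, its sublevels being contained in products of sublevels of the shifted $(\E_i)_{\omega}$. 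Furthermore, the density assumption on $j_1(D(\E_1))$ and $j_2(D(\E_2))$ yields density of $\bar{j}(D(\bar{\E}))$ in $\bar{\HH}$.

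Next I would establish two structural identifications in the product setting. Using Lemma~\ref{lem.easy.ident}(a) and the fact that, for convex functions, the one-sided directional derivatives exist as genuine limits and thus add over sums, the $\bar{j}$-subgradient factorises as $\partial_{\bar{j}}\bar{\E} = \partial_{j_1}\E_1 \times \partial_{j_2}\E_2$. Consequently the semigroup generated by $(\bar{\E},\bar{j})$ on $\bar{\HH}$ is the product semigroup $\bar{S}(t)(u_1,u_2) = (S_1(t)u_1, S_2(t)u_2)$. Moreover, applying Theorem~\ref{thm.eh.identification} separately to $\E_1$, $\E_2$ and to $\bar{\E}$, and using that the infimum in the definition of $\E_0$ decouples across the product, one obtains $\bar{\E}^{\bar{\HH}}(u_1,u_2) = \Eh_1(u_1) + \Eh_2(u_2)$.

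Now set $\bar{C}_1 := C\times C$ and $\bar{C}_2 := \{(u,v)\in \bar{\HH} : u\le v\}$; both are closed and convex in $\bar{\HH}$. A pointwise minimisation shows
\begin{displaymath}
  P_{\bar{C}_2}(u_1,u_2) = (u_1\wedge u_2,\; u_1\vee u_2),
\end{displaymath}
so the lattice hypothesis~\eqref{eq:6} translates precisely to $P_{\bar{C}_2}\bar{C}_1 \subseteq \bar{C}_1$, which is exactly condition~\eqref{eq:5} of Theorem~\ref{thm:beurling-version-2}; likewise, the hypothesis that $S_1$ and $S_2$ leave $C$ invariant is exactly the statement that $\bar{S}$ leaves $\bar{C}_1$ invariant.

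With these identifications in place, Theorem~\ref{thm:beurling-version-2} applied to $(\bar{\E},\bar{j})$, $\bar{C}_1$ and $\bar{C}_2$ yields the equivalence of its three conditions, which unwind to \eqref{thm:comparison-property-claim-1}, \eqref{thm:comparison-property-claim-2} and \eqref{thm:comparison-property-claim-3} of the present theorem: the invariance $\bar{S}(t)(\bar{C}_1\cap \bar{C}_2)\subseteq \bar{C}_2$ expresses the order comparison between $S_1$ and $S_2$ on pairs $u\le v$ in $C$, while the two inequalities involving $\bar{\E}^{\bar{\HH}}$ and $\bar{\E}$ split along the sum structure into the desired inequalities for $\Eh_1,\Eh_2$ and $\E_1,\E_2$, respectively. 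The main obstacle will be the careful bookkeeping for the two structural identifications in the product setting, namely $\partial_{\bar{j}}\bar{\E} = \partial_{j_1}\E_1\times\partial_{j_2}\E_2$ and $\bar{\E}^{\bar{\HH}} = \Eh_1 + \Eh_2$; once these are established, the theorem reduces to a direct translation of Theorem~\ref{thm:beurling-version-2}.
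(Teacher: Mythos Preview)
Your product-space reduction is the right idea and matches the paper's own strategy, but there is a genuine error in the key step: the orthogonal projection of $\bar{\HH}=L^2(\Sigma)\times L^2(\Sigma)$ onto $\bar{C}_2=\{(u,v):u\le v\}$ is \emph{not} given by $(u_1,u_2)\mapsto (u_1\wedge u_2,\;u_1\vee u_2)$. Doing the pointwise minimisation correctly, on the set $\{u_1>u_2\}$ the constraint $a\le b$ becomes active with $a=b$, and one obtains
\[
P_{\bar{C}_2}(u_1,u_2)=\Bigl(u_1-\tfrac12(u_1-u_2)^+,\;u_2+\tfrac12(u_1-u_2)^+\Bigr),
\]
which differs from $(u_1\wedge u_2,u_1\vee u_2)$ precisely where $u_1>u_2$. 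The paper notes this explicitly. As a consequence, neither the verification of hypothesis~\eqref{eq:5} nor the translation of conditions (2) and (3) of Theorem~\ref{thm:beurling-version-2} unwinds to the lattice inequalities \eqref{thm:comparison-property-claim-2} and \eqref{thm:comparison-property-claim-3} of the present theorem in the direct way you describe.

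The paper therefore proceeds differently: it takes $\bar{C}_1=\bar{\HH}$ (not $C\times C$), so that \eqref{eq:5} is trivial and Theorem~\ref{thm:beurling-version-2} yields an inequality involving the \emph{true} projection above; it then invokes a separate convexity argument from \cite[pp.~247--250]{By96} to pass between that inequality and the lattice-type inequality \eqref{thm:comparison-property-claim-2}. The equivalence \eqref{thm:comparison-property-claim-2}$\Leftrightarrow$\eqref{thm:comparison-property-claim-3} is established directly via Theorem~\ref{thm.eh.identification} and the fact that the infimum defining $\E_0$ is attained, without using the product construction at all. Your identifications $\partial_{\bar{j}}\bar{\E}=\partial_{j_1}\E_1\times\partial_{j_2}\E_2$ and $\bar{\E}^{\bar{\HH}}=\Eh_1+\Eh_2$ are correct and useful, but to complete your line of argument you must either supply the convexity bridge between the two projection-type conditions or, more simply, handle \eqref{thm:comparison-property-claim-2}$\Leftrightarrow$\eqref{thm:comparison-property-claim-3} directly as the paper does.
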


\begin{proof}
  Although the equivalence between~\eqref{thm:comparison-property-claim-1}
  and~\eqref{thm:comparison-property-claim-2} follows 
  from~\cite[Th\'e\-or\`eme~2.1]{By96}, we believe it is 
  instructive to show how this can be derived from Theorem
  \ref{thm:beurling} if one considers the product Hilbert space
  $\mathcal{H}:= L^2 (\Sigma )\times L^2 (\Sigma )$ equipped with the natural inner product,
  and the product space $\mathcal{V}:= \V_1\times \V_2$ equipped with
  the natural, locally convex product topology. Let $j :
  \mathcal{V}\to \mathcal{H}$ be the bounded linear operator and $\Phi :
  \mathcal{V} \to\eR$ the functional given respectively by
  \begin{align*}
    j (\hat{u}_1,\hat{u}_2) & := (j_{1}(\hat{u}_1) , j_{2}(\hat{u}_2))
    \text{ and}
    \\ 
    \Phi (\hat{u}_1,\hat{u}_2) & :=\E_{1} (\hat{u}_1)+\E_{2}(\hat{u}_2)
    \text{ for every }(\hat{u}_1,\hat{u}_2)\in \mathcal{V} .
  \end{align*}
  Then $\Phi$ is convex, proper, lower semicontinuous, $j$-elliptic, and the
  semigroup ${\mathcal S}$ generated by
  $(\Phi ,j)$ is just the diagonal semigroup given by
  \begin{equation}
    \label{eq:product-semigroup-S}
    {\mathcal S} (t)(u_{1},u_{2})=(S_1(t)u_{1},S_2(t)u_{2}) 
  \end{equation}
  for every $t\ge0$ and every $(u_{1},u_{2})\in D(S_1) \times
  D(S_2)$. 
  With these definitions, assertion \eqref{thm:comparison-property-claim-1}
  is equivalent to the property that the product semigroup $\mathcal{S}$ leaves
  the closed, convex set
  \begin{displaymath}
    \mathcal{C} :=\{ (u,v) \in C\times C : u\leq v\}
  \end{displaymath}
  invariant. Note that the orthogonal projection of
  $\mathcal{H}$ onto $\mathcal{C}$ is not given by
  $(u_{1},u_{1})\mapsto (u_{1}\wedge u_{2},u_{1}\vee u_{2})$, as one 
  might be led from assertion \eqref{thm:comparison-property-claim-2} to assume. However,
  if we take $\mathcal{C}_{1}=\mathcal{H}$ and $\mathcal{C}_{2} = \mathcal{C}$, 
  then by Theorem \ref{thm:beurling-version-2} and by following the same convexity
  argument as given in~\cite[p.247-250]{By96}, one sees that the property that $\mathcal{S}$ leaves
  $\mathcal{C}$ invariant and assertion \eqref{thm:comparison-property-claim-2} are equivalent.

  For us, it suffices to show that the
  assertions~\eqref{thm:comparison-property-claim-2}
  and~\eqref{thm:comparison-property-claim-3} are equivalent. So assume
  that \eqref{thm:comparison-property-claim-2} is true, and let
  $\hat{u}_{i}\in D(\E_{i})$ be such that $u_{i}:=j(\hat{u}_{i})\in C$ for
  $i=1$, $2$. By Theorem~\ref{thm.eh.identification}, $u_{i}\in
  D(\Eh_{i})$. By hypothesis, $u_{1}\wedge u_{2}\in D(\Eh_{1})$,
  $u_{1}\vee u_{2}\in D(\Eh_{2})$.
  Since the infimum in the definition of $\E_{0}$ is a minimum, it follows
  that there are $\hat{v}_{i}\in D(\E_{i})$ for $i=1,2$ such that
  $j(\hat{v}_{1})=u_{1}\wedge u_{2}$ and $j(\hat{v}_{2})=u_{1}\vee
  u_{2}$ satisfying
 \begin{displaymath}
   \E_1 (\hat{v}_{1})=\Eh_{1}(u_{1}\wedge u_{2})\qquad\text{and}\qquad
   \E_2 (\hat{v}_{2})=\Eh_{2}(u_{1}\vee u_{2}).
 \end{displaymath}
 Combining this together with the
 inequality from the hypothesis and again the
 characterisation of $\Eh$ (Theorem~\ref{thm.eh.identification}) yields
  \begin{displaymath}
    \E_1 (\hat{v}_{1})+\E_2 (\hat{v}_{2})=\Eh_1 (u_{1}\wedge u_{2}) +
    \Eh_2 (u_{1}\vee u_{2}) \le \Eh_{1}(u_{1})+\Eh_{2}(u_{2})\le
    \E_{1}(\hat{u}_{1})+\E_{2}(\hat{u}_{2}).
  \end{displaymath}
  Hence we have proved that \eqref{thm:comparison-property-claim-3} holds. 
  
  Conversely, assume that \eqref{thm:comparison-property-claim-3} is true, and
  let $u_{1}\in D(\Eh_{1})\cap C$ and $u_{2}\in D(\Eh_{2})\cap C$. Then 
  Theorem~\ref{thm.eh.identification} and the fact that the infimum in the
  definition of $\E_{0}$ is a minimum imply that there are
  $\hat{u}_{i}\in D(\E_{i})$ such that
  $j(\hat{u}_{i})=u_{i}$ and $\E_{i}(\hat{u}_{i})=\Eh(u_{i})$ for
  $i=1$, $2$. Let $\hat{v}_1\in D(\E_{1})$ and
  $\hat{v}_2\in D(\E_{2})$ be as in the hypothesis. 
  Recalling the identity $\Eh = \E_0$ from Theorem \ref{thm.eh.identification}, we obtain
\begin{align*}
 \Eh_1 (u_1\wedge u_2) + \Eh_2 (u_1\vee u_2) &\leq \E_1 (\hat{v}_1 ) + \E_2 (\hat{v}_2 ) \\
 & \leq \E_1 (\hat{u}_1 ) + \E_2 (\hat{u}_2 ) = \Eh_{1}(u_{1}) + \Eh_{2}(u_{2}) .
\end{align*}
\end{proof}

We formulate two consequences of Theorem
\ref{thm:comparison-property}. We call a semigroup $S = (S(t))_{t\geq 0}$ on $L^2 (\Sigma)$
\emph{order preserving on $C\subseteq L^2(\Sigma )$} if for every
$u$, $v\in C$ with $u\le v$ one has $S(t)u\le S(t)v$ for every $t\ge0$.
By taking the semigroup $S:=S_1=S_2$ (and $\E := \E_1 = \E_2$) in the
previous theorem, we obtain the characterisation in terms of the
functional $\E$ of the property that the semigroup $S$ is order
preserving on $C$. This extends~\cite[Corollaire~2.2]{By96}.

\begin{corollary}[Order-preserving semigroups]
  \label{cor:order-preservingness}
  Assume that $\E$ is convex, proper, lower semicontinuous and $j$-elliptic, and that 
  $j(D(\E ))$ is dense in $L^2 (\Sigma )$. 
  Suppose that $C\subseteq L^2 (\Sigma )$
  is a closed convex set satisfying~\eqref{eq:6} and that the semigroup
  $S$ on $L^2(\Sigma)$ generated by $(\E,j)$ leaves $C$ invariant. Then the
  following assertions are equivalent:
  \begin{enumerate}
  \item\label{cor:order-preservingness-claim-1} The semigroup $S$ is
    order preserving on $C$.
    
  \item\label{cor:order-preservingness-claim-2} For every $u_{1}$,
    $u_{2} \in C$ one has
    \begin{displaymath}
      \Eh(u_{1}\wedge u_{2})+\Eh(u_{1}\vee u_{2})\le \Eh(u_{1})+\Eh(u_{2}).
    \end{displaymath}

  \item\label{cor:order-preservingness-claim-3} For every $\hat{u}_1$,
    $\hat{u}_2\in D(\E )$ with $u_1:=j(\hat{u}_1)\in C$ and
    $u_2:=j(\hat{u}_2)\in C$, there are $\hat{v}_1$, $\hat{v}_2\in D(\E
    )$ such that $u_1\wedge u_2=j(\hat{v}_1)$, $u_1\vee
    u_2=j(\hat{v}_2)$ and
    \begin{displaymath}
      \E (\hat{v}_1)+\E(\hat{v}_2)\le \E(\hat{u}_1)+\E(\hat{u}_2).
    \end{displaymath}
  \end{enumerate}
\end{corollary}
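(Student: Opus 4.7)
The plan is to obtain Corollary~\ref{cor:order-preservingness} as a direct specialisation of Theorem~\ref{thm:comparison-property}, by setting all the data equal: $\V_1 = \V_2 = \V$, $j_1 = j_2 = j$, $\E_1 = \E_2 = \E$, and consequently $S_1 = S_2 = S$. The only work is then to verify that the hypotheses of Theorem~\ref{thm:comparison-property} are met and to recognise that, with this choice, each of its three equivalent assertions reduces verbatim to the corresponding assertion of the corollary.

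First I would check the hypotheses. The functional $\E$ is assumed convex, proper, lower semicontinuous and $j$-elliptic, which gives both hypotheses on $(\E_i,j_i)$; the density assumption $j(D(\E)) = j_1(D(\E_1)) = j_2(D(\E_2))$ dense in $L^2(\Sigma)$ is exactly what is required; the set $C$ satisfies the lattice condition~\eqref{eq:6}; and the single invariance assumption that $S$ leaves $C$ invariant supplies the two required invariances. So Theorem~\ref{thm:comparison-property} applies to the diagonal data.

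Next I would translate each of the three equivalent assertions. Assertion \eqref{thm:comparison-property-claim-1} becomes: for every $u,v\in C$ with $u\le v$, $S(t)u\le S(t)v$ for every $t\ge 0$, which is precisely the definition of $S$ being order preserving on $C$, i.e.~assertion \eqref{cor:order-preservingness-claim-1} of the corollary. Assertion \eqref{thm:comparison-property-claim-2} becomes
\begin{displaymath}
  \Eh(u_1\wedge u_2) + \Eh(u_1\vee u_2) \leq \Eh(u_1) + \Eh(u_2)
  \quad\text{for every } u_1,u_2\in C,
\end{displaymath}
which is \eqref{cor:order-preservingness-claim-2}. Finally, assertion~\eqref{thm:comparison-property-claim-3} becomes the existence, for every $\hat u_1,\hat u_2 \in D(\E)$ with $u_i := j(\hat u_i)\in C$, of lifts $\hat v_1,\hat v_2 \in D(\E)$ with $j(\hat v_1) = u_1\wedge u_2$, $j(\hat v_2) = u_1 \vee u_2$, and $\E(\hat v_1)+\E(\hat v_2) \le \E(\hat u_1)+\E(\hat u_2)$, which is exactly~\eqref{cor:order-preservingness-claim-3}.

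There is no real obstacle here; the content of the corollary is entirely contained in Theorem~\ref{thm:comparison-property}, and the proof is just bookkeeping. The only point at which one should pause is to note that when $S_1=S_2=S$ the invariance hypothesis of the theorem, which reads ``$S_1$ and $S_2$ leave $C$ invariant,'' collapses to the single hypothesis stated in the corollary, so no additional assumption is hidden in the specialisation.
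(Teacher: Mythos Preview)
Your proposal is correct and follows exactly the paper's own approach: the paper explicitly obtains this corollary by taking $S:=S_1=S_2$ and $\E:=\E_1=\E_2$ in Theorem~\ref{thm:comparison-property}, with no additional argument given. Your verification that the hypotheses and assertions collapse correctly is the full content of the proof.
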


Let $S_1$ and $S_2$ be two semigroups on $L^2 (\Sigma )$. We say that
the semigroup \emph{$S_{1}$ is dominated by $S_{2}$}, and we write $S_1
\preccurlyeq S_2$, if $S_{2}$ is positive and
\begin{displaymath}
  \abs{S_{1}(t)u}\le S_{2}(t)\abs{u}
\end{displaymath}
for every $u\in L^2 (\Sigma )$ and every $t\ge0$. Our next result
extends~\cite[Th\'eor\`eme~3.3]{By96}.

\begin{corollary}[Domination of semigroups]
  \label{cor:domination-property}
  Take the assumptions of Theorem \ref{thm:comparison-property}, and
  suppose that $S_{2}$ is positive and order preserving on $L^2 (\Sigma
  )^{+}$. Then the following assertions are equivalent:
  \begin{enumerate}
    \item\label{cor:domination-property-claim1} $S_{1}$ is dominated by $S_{2}$.
    
    \item\label{cor:domination-property-claim2} For every $u_{1}\in L^2 (\Sigma )$,
      $u_{2}\in L^{2}(\Sigma)^{+}$ one has
    \begin{displaymath}
      \Eh_{1}((\abs{u_{1}}\wedge u_{2})\sign (u_{1})) +
      \Eh_{2}(\abs{u_{1}}\vee u_{2})\le \Eh_1 (u_{1})+\Eh_2 (u_{2}).
    \end{displaymath}

  \item\label{cor:domination-property-claim3} For every $\hat{u}_1\in
    D(\E_1 )$ with $u_{1}:=j_{1}(\hat{u}_{1})$, $\hat{u}_2\in D(\E_2)$
    with $u_2 := j_2 (\hat{u}_2 )\in L^2 (\Sigma )^{+}$ there are
    $\hat{v}_1\in D(\E_1 )$, $\hat{v}_2\in D(\E_2 )$ such that
      \begin{displaymath}
      (\abs{u_1}\wedge u_2)\sign (u_{1}) =j_1(\hat{v}_1),\qquad
      \abs{u_1}\vee u_2=j_2(\hat{v}_2)
      \end{displaymath}
      and
      \begin{displaymath}
      \E_1 (\hat{v}_1)+\E_2 (\hat{v}_2) \le \E_1 (\hat{u}_1) + \E_2 (\hat{u}_2).
      \end{displaymath}
  \end{enumerate}
\end{corollary}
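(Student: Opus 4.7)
The plan is to model the argument directly on the proof of Theorem~\ref{thm:comparison-property}. The equivalence between~\eqref{cor:domination-property-claim1} and~\eqref{cor:domination-property-claim2} follows from \cite[Th\'eor\`eme~3.3]{By96} applied to the semigroups $S_{1}$ and $S_{2}$, which are contraction semigroups on $L^{2}(\Sigma)$ by Theorem~\ref{thm:generation}. Alternatively, as in the proof of Theorem~\ref{thm:comparison-property}, one can pass to the product data $\mathcal{V} = \V_{1}\times \V_{2}$, $j(\hat{u}_{1},\hat{u}_{2}) = (j_{1}(\hat{u}_{1}),j_{2}(\hat{u}_{2}))$, $\Phi(\hat{u}_{1},\hat{u}_{2}) = \E_{1}(\hat{u}_{1})+\E_{2}(\hat{u}_{2})$, and apply Theorem~\ref{thm:beurling} to the closed convex subset $\mathcal{C} = \{(u_{1},u_{2})\in L^{2}(\Sigma)\times L^{2}(\Sigma)^{+} : |u_{1}|\leq u_{2}\}$ of the product Hilbert space, since the orthogonal projection onto $\mathcal{C}$ sends $(u_{1},u_{2})$ to $((|u_{1}|\wedge u_{2})\sign(u_{1}),|u_{1}|\vee u_{2})$ (this is the Barth\'elemy computation).

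It remains to establish \eqref{cor:domination-property-claim2}$\Leftrightarrow$\eqref{cor:domination-property-claim3}, which we handle exactly as in the proof of Theorem~\ref{thm:comparison-property} by exploiting Theorem~\ref{thm.eh.identification}. Without loss of generality we assume $\Eh_{i} = (\E_{i})_{0}$ (no additive constant), so that the infimum defining $\Eh_{i}(u) = \inf_{j_{i}(\hat{u}) = u}\E_{i}(\hat{u})$ is a minimum whenever it is finite.

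For the direction \eqref{cor:domination-property-claim2}$\Rightarrow$\eqref{cor:domination-property-claim3}, given $\hat{u}_{1}\in D(\E_{1})$ and $\hat{u}_{2}\in D(\E_{2})$ with $u_{2}=j_{2}(\hat{u}_{2})\geq 0$, the chain
\begin{displaymath}
\Eh_{1}((|u_{1}|\wedge u_{2})\sign(u_{1})) + \Eh_{2}(|u_{1}|\vee u_{2}) \leq \Eh_{1}(u_{1}) + \Eh_{2}(u_{2}) \leq \E_{1}(\hat{u}_{1}) + \E_{2}(\hat{u}_{2}) < \infty
\end{displaymath}
forces both terms on the far left to be finite, hence by Theorem~\ref{thm.eh.identification} the arguments lie in $j_{1}(D(\E_{1}))$ and $j_{2}(D(\E_{2}))$ respectively, and minimisers $\hat{v}_{1}\in D(\E_{1})$, $\hat{v}_{2}\in D(\E_{2})$ exist realising these values, yielding $\E_{1}(\hat{v}_{1})+\E_{2}(\hat{v}_{2})\leq \E_{1}(\hat{u}_{1})+\E_{2}(\hat{u}_{2})$ as required.

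For the converse \eqref{cor:domination-property-claim3}$\Rightarrow$\eqref{cor:domination-property-claim2}, given $u_{1}\in L^{2}(\Sigma)$, $u_{2}\in L^{2}(\Sigma)^{+}$ with both $\Eh_{i}(u_{i})$ finite (otherwise the inequality is trivial), Theorem~\ref{thm.eh.identification} yields minimisers $\hat{u}_{i}\in D(\E_{i})$ with $j_{i}(\hat{u}_{i})=u_{i}$ and $\E_{i}(\hat{u}_{i})=\Eh_{i}(u_{i})$. Applying \eqref{cor:domination-property-claim3} produces $\hat{v}_{1}, \hat{v}_{2}$ whose images under $j_{1}, j_{2}$ are the desired lattice expressions, and the definition of $(\E_{i})_{0}$ together with \eqref{cor:domination-property-claim3} gives
\begin{displaymath}
\Eh_{1}((|u_{1}|\wedge u_{2})\sign(u_{1})) + \Eh_{2}(|u_{1}|\vee u_{2}) \leq \E_{1}(\hat{v}_{1}) + \E_{2}(\hat{v}_{2}) \leq \E_{1}(\hat{u}_{1}) + \E_{2}(\hat{u}_{2}) = \Eh_{1}(u_{1}) + \Eh_{2}(u_{2}).
\end{displaymath}

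The only genuine subtlety is the identification of the orthogonal projection onto $\mathcal{C}$ in the product space reduction, since the map $(u_{1},u_{2})\mapsto ((|u_{1}|\wedge u_{2})\sign(u_{1}),|u_{1}|\vee u_{2})$ is not literally such a projection but is linked to it through a convexity argument as in \cite[p.247--250]{By96}; however, this is precisely the argument already alluded to in the proof of Theorem~\ref{thm:comparison-property}, so no new work is needed. Once \eqref{cor:domination-property-claim1}$\Leftrightarrow$\eqref{cor:domination-property-claim2} is accepted, the remaining equivalence is the straightforward minimiser-chasing displayed above.
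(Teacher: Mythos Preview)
Your proposal is correct and follows exactly the route the paper takes: the equivalence \eqref{cor:domination-property-claim1}$\Leftrightarrow$\eqref{cor:domination-property-claim2} is quoted from \cite[Th\'eor\`eme~3.3]{By96}, and \eqref{cor:domination-property-claim2}$\Leftrightarrow$\eqref{cor:domination-property-claim3} is handled by the same minimiser-chasing via Theorem~\ref{thm.eh.identification} as in the proof of Theorem~\ref{thm:comparison-property}. The only cosmetic blemish is that your first paragraph asserts the projection formula for $\mathcal{C}$ before retracting it in the last paragraph; since you ultimately rely on the Barth\'elemy argument anyway, it would be cleaner to drop the intermediate claim altogether.
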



\begin{proof}[Proof of Corollary~\ref{cor:domination-property}]
  The equivalence of the assertions
  \eqref{cor:domination-property-claim1}
  and~\eqref{cor:domination-property-claim2} follows
  from~\cite[Th\'e\-or\`eme~3.3]{By96} and the equivalence between
  \eqref{cor:domination-property-claim2}
  and~\eqref{cor:domination-property-claim3} is proved by using the same
  arguments as given above in the proof of
  Theorem~\ref{thm:comparison-property}.
\end{proof}

\subsection{$L^\infty$-contractivity and extrapolation of semigroups}
\label{sec:extrapolation}

Let $\psi : \HH\to\eR$ be a convex, proper and lower semicontinuous
functional on a Hilbert space $\HH$.  We say that a maximal monotone
operator $A\subseteq \HH\times \HH$ is {\em $\psi$-accretive} if for all
$(u_{1},v_{1})$, $(u_{2},v_{2})\in A$ and all $\lambda>0$ one has
\begin{displaymath}
  \psi(u_{1}-u_{2}+\lambda (v_{1}- v_{2}))\ge \psi(u_{1}-u_{2}) .
\end{displaymath}
Similarly, we say that a semigroup $S$ on the Hilbert space $\HH$ is
{\em $\psi$-contractive}, if for all $u_1$, $u_2\in D(S)\subseteq\HH$
and all $t\geq 0$ one has
\begin{displaymath}
 \psi (S(t)u_1 - S(t)u_2) \leq \psi (u_1-u_2).
\end{displaymath}
In what follows, a family of typical examples of functionals on the Hilbert space
$\HH = L^2 (\Sigma )$ will be the $L^p$-norms (with effective domain 
$L^2 \cap L^p (\Sigma )$), and we then also speak of
$L^p$-accretivity of the operator $A$, or of $L^p$-contractivity of the
semigroup $S$.

The following result will be useful in the sequel.

\begin{lemma}{{\rm(\cite[Proposition 4.7]{Br73})}}
 \label{lem:psi-accretive}
 Let $A\subseteq \HH\times\HH$ be a maximal monotone operator on a
 Hilbert space $\HH$, and let $S$ be the semigroup generated by
 $-A$. Further, let $\psi : \HH \to\eR$ be a convex, proper and lower
 semicontinuous functional. Then $A$ is $\psi$-accretive if and only if
 $S$ is $\psi$-contractive.
 \end{lemma}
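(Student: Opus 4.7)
The proof rests on a single key reformulation: $A$ is $\psi$-accretive if and only if, for every $\lambda>0$, the resolvent $J_\lambda:=(I+\lambda A)^{-1}$ is $\psi$-contractive in the sense that $\psi(J_\lambda w_1-J_\lambda w_2)\le \psi(w_1-w_2)$ for all $w_1,w_2\in\HH$. This follows directly from the definitions by the substitution $w_i=u_i+\lambda v_i$, under which $u_i=J_\lambda w_i$, $v_i=(w_i-u_i)/\lambda$, and, since $A$ is maximal monotone, $\HH=R(I+\lambda A)$. The bijection $(u,v)\in A\leftrightarrow w\in\HH$ puts the $\psi$-accretivity inequality $\psi(u_1-u_2+\lambda(v_1-v_2))\ge \psi(u_1-u_2)$ into exact correspondence with the $\psi$-contractivity of $J_\lambda$ on all of $\HH$. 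The plan, then, is to establish the equivalence $S$ is $\psi$-contractive $\iff$ $J_\lambda$ is $\psi$-contractive for every $\lambda>0$.

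For the forward direction, suppose $A$ is $\psi$-accretive, so $J_\lambda$ is $\psi$-contractive for every $\lambda>0$. Iterating, the composition $J_{t/n}^n$ is $\psi$-contractive for every $n\in\N$ and $t>0$. I then invoke the Crandall--Liggett--Brezis exponential formula, valid for maximal monotone operators on Hilbert spaces, which gives $S(t)u=\lim_{n\to\infty}J_{t/n}^n u$ strongly in $\HH$ for every $u\in\overline{D(A)}$. The lower semicontinuity of $\psi$ then passes to the limit, yielding
\begin{equation*}
\psi(S(t)u_1-S(t)u_2)\le \liminf_{n\to\infty}\psi(J_{t/n}^n u_1-J_{t/n}^n u_2)\le \psi(u_1-u_2).
\end{equation*}

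For the reverse direction, suppose $S$ is $\psi$-contractive. Fix $u_1,u_2\in D(A)$. By classical regularity theory for the evolution $\dot{u}+Au\ni 0$, the right derivative $\tfrac{\mathrm{d}^+}{\mathrm{d}t}\big|_{t=0}S(t)u_i=-A^\circ u_i$ exists, where $A^\circ u_i$ is the element of minimal norm in $Au_i$. Combined with $\psi$-contractivity of $S$, this yields $\psi(u_1-u_2-t(A^\circ u_1-A^\circ u_2))\le \psi(u_1-u_2)+o(t)$ as $t\to 0^+$. A standard convex-analysis lemma (for convex proper lsc $\psi$, a nonpositive right directional derivative at $x$ in direction $-v$ forces the function to be nondecreasing along the ray from $x$ in direction $+v$, via subadditivity of directional derivatives) then upgrades this to $\psi(u_1-u_2+\lambda(A^\circ u_1-A^\circ u_2))\ge \psi(u_1-u_2)$ for all $\lambda\ge0$, giving $\psi$-accretivity along minimal sections.

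The main obstacle is the passage from minimal sections to arbitrary sections of $A$: the Yosida element $A_\lambda w=(w-J_\lambda w)/\lambda$, which represents the specific section of $A J_\lambda w$ singled out by $w$, is in general \emph{not} the minimal section, so the above differentiation argument does not immediately yield $\psi$-contractivity of $J_\lambda$ on all of $\HH$. I would handle this by working with the Yosida approximations $A_\mu$ (which are Lipschitz, single-valued, and hence coincide with their own minimal sections), applying the minimal-section argument to the smooth semigroup $S_\mu$ generated by $-A_\mu$, and passing to the limit $\mu\to0^+$ using the convergence $S_\mu(t)u\to S(t)u$ on $\overline{D(A)}$ and the bijection between sections of $A$ and base points of the Yosida operator. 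This is the technical core, and the complete argument is precisely that of Brezis in \cite[Proposition 4.7]{Br73}.
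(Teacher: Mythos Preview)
The paper does not give a proof of this lemma at all: it is stated with a citation to \cite[Proposition 4.7]{Br73} and then used as a black box in the proof of Theorem~\ref{thm:L1-contractivity}. So there is no ``paper's own proof'' to compare against; your proposal is an attempt to supply what the paper merely quotes.

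On the substance of your sketch: the reformulation ``$A$ is $\psi$-accretive $\iff$ every $J_\lambda$ is $\psi$-contractive'' is exactly right, and the forward direction via the exponential formula and lower semicontinuity of $\psi$ is clean and complete. The reverse direction, as you yourself flag, is where the work lies. Your differentiation argument at $t=0$ combined with the sublinearity of directional derivatives is correct for minimal sections, but the passage from minimal sections to arbitrary sections is genuinely nontrivial and your proposed Yosida-regularisation route, while plausible, is left as a pointer back to Brezis rather than carried out. One alternative that sidesteps this difficulty entirely is the product-space trick already used elsewhere in the paper (see the proof of Theorem~\ref{thm:comparison-property}): on $\HH\times\HH$ with the diagonal operator $A\times A$ and diagonal semigroup, $\psi$-contractivity of $S$ becomes invariance of each sublevel set $\{(u_1,u_2):\psi(u_1-u_2)\le c\}$ under the product semigroup, and $\psi$-accretivity of $A$ becomes invariance of the same sets under the product resolvent; the equivalence is then a direct instance of the convex-set invariance principle \cite[Proposition~4.5]{Br73}, with no need to differentiate or to handle non-minimal sections separately.
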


 We first characterise $L^\infty$-contractivity of semigroups. The
 equivalence of assertions \eqref{thm:Linfty-contractivity-claim-1} and
 \eqref{thm:Linfty-contractivity-claim-2} in the following theorem
 follows from Cipriani and Grillo \cite[Section 3]{CiGr03} and relies again on Theorem
 \ref{thm:beurling} and the same product semigroup construction as
 described in the proof of Theorem \ref{thm:comparison-property} (see
 also B\'enilan and Picard \cite{BePi79a} and B\'enilan and Crandall \cite{BeCr91}), while
 the proof of the equivalence of assertions
 \eqref{thm:Linfty-contractivity-claim-2} and
 \eqref{thm:Linfty-contractivity-claim-3} is similar to the proof of the
 corresponding equivalence in Theorem \ref{thm:beurling}; we omit the
 details.

\begin{theorem}[$L^\infty$-contractivity of semigroups] \label{thm:Linfty-contractivity}
  Assume that $\E$ is convex, proper, lower semicontinuous and $j$-elliptic, that $j(D(\E ))$ is
dense in $L^2 (\Sigma )$, and let $S$ be
  the semigroup on $L^2 (\Sigma )$ generated by $(\E,j)$.  
   Then the following assertions are equivalent:
\begin{enumerate}
 \item \label{thm:Linfty-contractivity-claim-1} The
   semigroup $S$ is $L^{\infty}$-contractive on $L^{2}(\Sigma)$.

  \item \label{thm:Linfty-contractivity-claim-2} For every
    $u_{1}$, $u_{2}\in\HH$ and for every $\alpha>0$, one has
    \begin{align*}
     &  \Eh\left((u_{1}\vee \tfrac{u_{1}+u_{2}-\alpha}{2})\wedge
    (\tfrac{u_{1}+u_{2}+\alpha}{2})\right) +
      \Eh\left((u_{2}\wedge \tfrac{u_{1}+u_{2}+\alpha}{2})\vee
    (\tfrac{u_{1}+u_{2}-\alpha}{2})\right) \\
     &  \quad \le \Eh ({u}_{1}) + \Eh ({u}_{2}).
    \end{align*}
 \item \label{thm:Linfty-contractivity-claim-3} For every
    $\hat{u}_1$, $\hat{u}_2\in D(\E )$ with $u_{1}=j(\hat{u}_{1})$ and 
    $u_{2}=j(\hat{u}_{2})$, and for every $\alpha>0$,
    there are $\hat{v}_{1}$, $\hat{v}_{2}\in D(\E)$ such that
    \begin{align*} 
    & \Big(u_{1}\vee \tfrac{u_{1}+u_{2}-\alpha}{2}\Big)\wedge
    \Big(\tfrac{u_{1}+u_{2}+\alpha}{2}\Big)=j(\hat{v}_{1}),\\
    & \Big(u_{2}\wedge \tfrac{u_{1}+u_{2}+\alpha}{2}\Big)\vee
    \Big(\tfrac{u_{1}+u_{2}-\alpha}{2}\Big)=j(\hat{v}_{2}),
    \end{align*}
    and
    \[
      \E\left(\hat{v}_{1}\right) +
      \E\left(\hat{v}_{2}\right)
      \le \E(\hat{u}_{1}) + \E(\hat{u}_{2}).
    \]
\end{enumerate}
\end{theorem}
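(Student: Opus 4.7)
The plan is to reduce both equivalences to tools already developed in Section~\ref{sec.invariance} via the product space construction used in the proof of Theorem~\ref{thm:comparison-property}. Concretely, I set $\mathcal{H} := L^2(\Sigma) \times L^2(\Sigma)$ with the natural inner product, $\mathcal{V} := \V \times \V$ with the product topology, $\tilde{j}(\hat{u}_1,\hat{u}_2) := (j(\hat{u}_1), j(\hat{u}_2))$, and $\Phi(\hat{u}_1,\hat{u}_2) := \E(\hat{u}_1) + \E(\hat{u}_2)$. Then $\Phi$ is convex, proper, lower semicontinuous and $\tilde{j}$-elliptic, $\tilde{j}$ is weak-to-weak continuous, and the semigroup $\mathcal{S}$ on $\mathcal{H}$ generated by $(\Phi,\tilde{j})$ is the diagonal $\mathcal{S}(t)(u_1,u_2) = (S(t)u_1,S(t)u_2)$. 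Moreover, since $\Phi$ decouples across the two factors, Theorem~\ref{thm.eh.identification} yields $\Phi^{\mathcal{H}}(u_1,u_2) = \Eh(u_1) + \Eh(u_2)$ modulo an additive constant.

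For the equivalence of \eqref{thm:Linfty-contractivity-claim-1} and \eqref{thm:Linfty-contractivity-claim-2}, I associate to each $\alpha > 0$ the closed, convex set
\[
C_\alpha := \{ (u_1,u_2) \in \mathcal{H} : -\alpha \le u_1 - u_2 \le \alpha \text{ a.e.}\}.
\]
The $L^\infty$-contractivity of $S$ is plainly equivalent to the invariance of $C_\alpha$ under $\mathcal{S}$ for every $\alpha > 0$. A pointwise minimisation shows that the orthogonal projection of $(u_1,u_2) \in \mathcal{H}$ onto $C_\alpha$ has components
\[
\bigl(u_1 \vee \tfrac{u_1+u_2-\alpha}{2}\bigr) \wedge \tfrac{u_1+u_2+\alpha}{2}
\quad \text{and} \quad
\bigl(u_2 \wedge \tfrac{u_1+u_2+\alpha}{2}\bigr) \vee \tfrac{u_1+u_2-\alpha}{2},
\]
which are exactly the truncations appearing in \eqref{thm:Linfty-contractivity-claim-2}. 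Applying Theorem~\ref{thm:beurling} to $\mathcal{S}$, $\Phi$, $\Phi^{\mathcal{H}}$ and $C_\alpha$ then yields the desired equivalence.

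For \eqref{thm:Linfty-contractivity-claim-2} $\Leftrightarrow$ \eqref{thm:Linfty-contractivity-claim-3} I would argue exactly as in the proof of the corresponding step in Theorem~\ref{thm:beurling}, exploiting that by Theorem~\ref{thm.eh.identification} the infimum in $\Eh = \E_0$ is attained. For the forward direction, given $\hat{u}_1, \hat{u}_2 \in D(\E)$ with $u_i := j(\hat{u}_i)$, apply \eqref{thm:Linfty-contractivity-claim-2} to $u_1,u_2$ and choose $\hat{v}_1,\hat{v}_2 \in D(\E)$ that attain $\Eh$ on the respective projected values; then $\E(\hat{v}_1) + \E(\hat{v}_2) = \Eh(v_1^\ast) + \Eh(v_2^\ast) \le \Eh(u_1) + \Eh(u_2) \le \E(\hat{u}_1) + \E(\hat{u}_2)$. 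For the converse, given $u_1, u_2 \in L^2(\Sigma)$ with finite energies, pick $\hat{u}_i \in D(\E)$ with $\E(\hat{u}_i) = \Eh(u_i)$, apply \eqref{thm:Linfty-contractivity-claim-3}, and then use $\Eh(v_i^\ast) \le \E(\hat{v}_i)$ to conclude.

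The main obstacle is the first equivalence, specifically verifying cleanly that $L^\infty$-contractivity of $S$ corresponds exactly to the simultaneous invariance of all the sets $C_\alpha$ under the product semigroup, and identifying the projection $P_{C_\alpha}$ in the stated closed form. Both are elementary (the second reduces to projecting points of $\mathbb{R}^2$ onto the strip $\{|x-y| \le \alpha\}$), but they are the only non-formal inputs; once they are in hand, the remainder is a direct appeal to Theorem~\ref{thm:beurling} and Theorem~\ref{thm.eh.identification} in the product setting, along the lines of Cipriani and Grillo~\cite{CiGr03}.
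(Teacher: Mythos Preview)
Your proposal is correct and follows essentially the same route as the paper: the paper explicitly attributes \eqref{thm:Linfty-contractivity-claim-1}$\Leftrightarrow$\eqref{thm:Linfty-contractivity-claim-2} to Cipriani and Grillo via the product semigroup construction and Theorem~\ref{thm:beurling}, and says that \eqref{thm:Linfty-contractivity-claim-2}$\Leftrightarrow$\eqref{thm:Linfty-contractivity-claim-3} is proved ``similarly to the proof of the corresponding equivalence in Theorem~\ref{thm:beurling}'', omitting the details. You have simply written out those details---the strip $C_\alpha$, its projection, and the attained-infimum argument via Theorem~\ref{thm.eh.identification}---exactly along these lines.
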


If in Theorem \ref{thm:Linfty-contractivity} the semigroup $S$ is in
addition order preserving, then we obtain a large number of 
additional equivalent statements. To that end, we first 
briefly recall the notion of \emph{Orlicz spaces}. 
Regarding \cite[Chapter 3]{RaRe91}, a continuous function $\psi :
\R_+ \to\R_+$ is an {\em $N$-function} if it is convex, $\psi (s)=0$ if
and only if $s=0$, $\lim_{s\to 0+} \psi (s) / s = 0$, and
$\lim_{s\to\infty} \psi (s) / s = \infty$. Given an $N$-function $\psi$,
the {\em Orlicz space} $L^\psi(\Sigma )$ is the space
\[
L^\psi (\Sigma ) := \{ u : \Sigma \to\R \text{ measurable}: \int_\Sigma
\psi (\frac{|u|}{\alpha} )\; \mathrm{d}\mu <\infty \text{ for some }
\alpha >0\}
\]
equipped with the Orlicz-Minkowski norm
\[
\| u\|_{L^\psi} := \inf \{ \alpha >0 : \int_\Sigma \psi
(\frac{|u|}{\alpha} )\; \mathrm{d}\mu\leq 1\} .
\] 

In addition, for the following theorem, we make use
of the set $\mathcal{J}_{0}$ of all convex, lower semicontinuous functionals
 $\psi : \R\to[0,+\infty]$ satisfying $\psi (0)=0$.

 \begin{theorem}\label{thm:L1-contractivity}
  Suppose in addition to the assumptions of Theorem \ref{thm:Linfty-contractivity}
   that $j(D(\varphi))$ lies dense in $H$ and the semigroup $S$ is order preserving. Then
   the assertions \eqref{thm:Linfty-contractivity-claim-1},
   \eqref{thm:Linfty-contractivity-claim-2} and
   \eqref{thm:Linfty-contractivity-claim-3} from Theorem
   \ref{thm:Linfty-contractivity} are equivalent to each of the
   following assertions:
 \begin{enumerate} \setcounter{enumi}{3}
  \item\label{thm:L1-contractivity-claim-3}
    $\partial_{j}\E$ is $L^{\infty}$-accretive on $L^{2}(\Sigma)$.

  \item\label{thm:L1-contractivity-claim-4}
    $\partial_{j}\E$ is $L^{1}$-accretive on $L^{2}(\Sigma)$.

  \item\label{thm:L1-contractivity-claim-4-plus}
    $\partial_{j}\E$ is $L^{q}$-accretive on $L^{2}(\Sigma)$ for all
    $q\in (1,\infty)$.
  
  \item\label{thm:L1-contractivity-claim-4-plus-stern} 
    $\partial_{j}\E$ is $L^{\psi}$-accretive on $L^{2}(\Sigma)$
    for all $N$-functions $\psi$.   

  \item{\label{thm:L1-contractivity-claim4-plus-stern-alpha}}
    $\partial_{j}\E$ is completely accretive (in the sense of~\cite{BeCr91}),
    that is,
    \begin{equation}\label{ineq:completely-accretive}
      \int_{\Sigma}\psi (u_{1}-u_{2})\,\mathrm{d}\mu \le
      \int_{\Sigma} \psi (u_{1}-u_{2}+\lambda(v_{1}-v_{2}))\,\mathrm{d}\mu
    \end{equation}
    for all $\psi\in \mathcal{J}_{0}$ and all $(u_{1},v_{1})$, $(u_{2},v_{2})\in \partial_{j}\E$. 
    
  \item \label{thm:L1-contractivity-claim-5}
    The semigroup $S$ is $L^{1}$-contractive
    on $L^{2}(\Sigma)$.
  
  \item \label{thm:L1-contractivity-claim-5p}
   The semigroup $S$ is $L^{q}$-contractive
    on $L^{2}(\Sigma)$ for all $q\in (1,\infty)$.

 \item \label{thm:L1-contractivity-claim-5N}
   The semigroup $S$ is $L^{\psi}$-contractive
    on $L^{2}(\Sigma)$ for all $N$-functions $\psi$.

 \item \label{thm:L1-contractivity-claim-5-complete}
   The semigroup $S$ is {\em completely contractive}, that is, 
   \begin{equation}\label{ineq:completely-contractive-a}
     \int_{\Sigma}\psi (S(t)u_{1}-S(t)u_{2})\,\mathrm{d}\mu \le 
     \int_{\Sigma}\psi (u_{1}-u_{2})\,\mathrm{d}\mu
   \end{equation}
   for all $\psi\in \mathcal{J}_{0}$, $t\geq 0$ and all $u_{1}$, $u_{2}\in L^2(\Sigma)$.
 \end{enumerate}
 Moreover, if one of the equivalent conditions
 \eqref{thm:Linfty-contractivity-claim-1}-\eqref{thm:L1-contractivity-claim-5-complete}
 holds, and if there exists $u_0\in L^1\cap L^\infty (\Sigma )$ such
 that the orbit $S(\cdot )u_0$ is locally bounded on $\R_+$ with values
 in $L^1\cap L^\infty (\Sigma )$, then, for every $N$-function $\psi$,
 the semigroup $S$ can be extrapolated to a strongly continuous,
 order-preserving semigroup $S_\psi$ of contractions on $L^\psi(\Sigma)$.
\end{theorem}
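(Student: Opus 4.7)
The plan is to use Lemma~\ref{lem:psi-accretive} to pair each accretivity condition with its corresponding contractivity condition, thereby reducing the theorem to proving that the five accretivity conditions (4)--(8) are mutually equivalent; then I will close the chain by invoking the classical B\'enilan--Crandall theorem \cite{BeCr91} that "order-preserving plus $L^\infty$-accretive implies completely accretive". The extrapolation statement is treated last, by a standard density argument.

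\emph{Pairings.} First I will apply Lemma~\ref{lem:psi-accretive} on $H=L^2(\Sigma)$ with $\psi$ taken to be, successively, the convex, proper, lower semicontinuous functionals $\|\cdot\|_{L^\infty}$, $\|\cdot\|_{L^1}$, $\|\cdot\|_{L^q}$ and $\|\cdot\|_{L^\psi}$. This yields at once (4)$\Leftrightarrow$(1), (5)$\Leftrightarrow$(9), (6)$\Leftrightarrow$(10) and (7)$\Leftrightarrow$(11). For (8)$\Leftrightarrow$(12) I apply the same lemma to each of the functionals $u\mapsto\int_{\Sigma}\psi(u)\,\mathrm{d}\mu$ with $\psi\in\mathcal{J}_0$; each is convex, lower semicontinuous by Fatou, and proper. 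It then suffices to prove the mutual equivalence of (4)--(8).

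\emph{Accretivity chain.} That (8) implies each of (4)--(7) is routine: I plug into (\ref{ineq:completely-accretive}) the test functions $\psi(s)=|s|^q$ (giving (5) and (6)), $\psi_k(s)=(|s|-k)^+$ with $k=\|u_1-u_2+\lambda(v_1-v_2)\|_{L^\infty}$ (giving (4) after noting that the right-hand side is then zero and $\psi_k\ge 0$), and $s\mapsto\psi(|s|/\alpha)$ combined with the Orlicz--Minkowski definition of the norm, applied with $\alpha$ exceeding the $L^\psi$-norm on the right (giving (7)). The reverse direction (4)$\Rightarrow$(8) is the substantive step. The hypothesis that $S$ is order preserving translates, via Corollary~\ref{cor:order-preservingness} (or directly via Theorem~\ref{thm:beurling} applied to the closed convex half-spaces $\{w : w\le v\}$, $v\in L^2(\Sigma)$), into order-preservation of the resolvents of $\partial_j\E$, i.e.\ $T$-accretivity. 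The B\'enilan--Crandall theorem \cite{BeCr91} asserts that a maximal monotone operator on $L^2(\Sigma)$ which is simultaneously $T$-accretive and $L^\infty$-accretive is completely accretive; combined with (4) this yields (8) and closes the chain. The hard part is precisely this appeal to B\'enilan--Crandall; everything else is bookkeeping.

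\emph{Extrapolation.} Finally, suppose $u_0\in L^1\cap L^\infty(\Sigma)$ has orbit $S(\cdot)u_0$ locally bounded in $L^1\cap L^\infty$, and fix an $N$-function $\psi$. Using the inclusion $L^1\cap L^\infty\subseteq L^\psi$ together with the $L^\psi$-contractivity (11) applied to the pair $(u,u_0)$ for $u\in L^2\cap L^\psi$, I obtain
\[
\|S(t)u-S(t)u_0\|_{L^\psi}\le\|u-u_0\|_{L^\psi},
\]
so that $S(t)u\in L^\psi$ with a uniform local bound on compact time intervals. Since $L^2\cap L^\psi$ is dense in the separable part of $L^\psi$ (by truncation and dominated convergence, using that $\psi$ is an $N$-function), the uniform $L^\psi$-contractivity on this dense subset permits a standard extension by uniform continuity to a strongly continuous semigroup $S_\psi$ of contractions on $L^\psi(\Sigma)$; order-preservation is inherited from $S$ by passing to the limit through the density argument.
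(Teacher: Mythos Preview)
Your overall strategy mirrors the paper's, and the pairings (4)$\Leftrightarrow$(1), (5)$\Leftrightarrow$(9), (6)$\Leftrightarrow$(10), (7)$\Leftrightarrow$(11), (8)$\Leftrightarrow$(12) via Lemma~\ref{lem:psi-accretive} are fine. There are, however, two genuine gaps.

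\textbf{First gap: the implication (4)$\Rightarrow$(8).} The version of B\'enilan--Crandall you invoke---``$T$-accretive plus $L^\infty$-accretive implies completely accretive''---is stronger than what is actually available. The result in \cite{BeCr91} (recorded here as Lemma~\ref{lem:nonlinear-interpolation}) says that a map is completely contractive if and only if it is order preserving \emph{and} $L^1$-contractive \emph{and} $L^\infty$-contractive; all three ingredients are needed. From (4) and the standing order-preservation hypothesis you have two of them, but not $L^1$. The paper bridges this with the Duality Principle of B\'enilan--Picard (Lemma~\ref{lem:duality-principle}): since $\partial_j\E$ coincides with a classical subgradient $\partial\Eh$ (Corollary~\ref{cor.extended-quasiconvex-functional}), $L^\infty$-accretivity and $L^1$-accretivity are equivalent, giving (4)$\Leftrightarrow$(5). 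Only then does the interpolation lemma yield (8). Without this duality step your chain does not close.

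\textbf{Second gap: no return from (5), (6), (7).} Even granting (4)$\Leftrightarrow$(8), you have only shown (8)$\Rightarrow$(5),(6),(7), never any implication back from (5), (6) or (7) to the rest. So the announced ``mutual equivalence of (4)--(8)'' is incomplete. The paper closes the cycle on the semigroup side via (12)$\Rightarrow$(11)$\Rightarrow$(10)$\Rightarrow$(9) (dilations $\psi(\cdot/\alpha)$, then the choice $\psi(s)=s^q$, then the limit $q\to 1$), which together with the pairings and (4)$\Leftrightarrow$(5) makes everything equivalent. You need some such return path.

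A smaller point on the extrapolation part: the genuinely nontrivial step is strong continuity of $S_\psi$ at $t=0$, which does not follow automatically from $L^\psi$-contractivity plus strong $L^2$-continuity. The paper proves it by an explicit $\varepsilon$--$\delta$ argument exploiting the $N$-function property $\psi(s)/s\to 0$ as $s\to 0$ together with the local $L^1\cap L^\infty$-boundedness of orbits; your ``standard extension by uniform continuity'' sweeps this under the rug.
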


Following the convention of \cite{CiGr03}, we call a convex, proper and lower
semicontinuous functional on $L^2 (\Sigma)$ which satisfies property
\eqref{cor:order-preservingness-claim-2} of Corollary
\ref{cor:order-preservingness} a (nonlinear) {\em semi-Dirichlet form},
and we call it a (nonlinear) {\em Dirichlet form} if it satisfies in
addition property \eqref{thm:Linfty-contractivity-claim-2} of Theorem
\ref{thm:Linfty-contractivity} above. Accordingly, we call a pair $(\E
,j)$ consisting of a weak-to-weak continuous operator $j:\V\to L^2
(\Sigma )$ and a convex, proper and $j$-elliptic functional $\E : \V\to\eR$
a {\em Dirichlet form} if it satisfies the assertions
\eqref{cor:order-preservingness-claim-3} of Corollary
\ref{cor:order-preservingness} and
\eqref{thm:Linfty-contractivity-claim-3} of Theorem
\ref{thm:Linfty-contractivity}. By Corollary
\ref{cor:order-preservingness} and Theorem
\ref{thm:Linfty-contractivity}, Dirichlet forms are exactly those energy
functionals on $L^2 (\Sigma )$ / pairs $(\E,j)$ which generate order
preserving, $L^\infty$-contractive semigroups. This characterisation goes back
to B\'enilan and Picard \cite{BePi79a}, who also used the
term {\em Dirichlet form} in the nonlinear context. B\'enilan and Picard also proved
in \cite{BePi79a} that semigroups generated by Dirichlet forms extrapolate
to contraction semigroups on all $L^q (\Sigma )$-spaces ($q\in [1,\infty ]$)
and, more generally, on Orlicz spaces; see also
\cite[Theorem 3.6]{CiGr03} for the $L^q$ case. This result is somewhat parallel to the
theory of sesquilinear Dirichlet forms; see, for example,
\cite[Corollary 2.16]{Ou04}. Theorem~\ref{thm:L1-contractivity} includes these results from \cite{BePi79a,CiGr03,Ou04}.

For the proof of Theorem~\ref{thm:L1-contractivity}, we need first the 
 so-called \emph{duality principle} for subgradients established by B\'enilan and
 Picard~\cite{BePi79a}.

 \begin{lemma}[{Duality Principle,
 \cite[Corollaire 2.1 and subsequent Example]{BePi79a}}]
 \label{lem:duality-principle}
 Let $\Eh : L^{2}(\Sigma)\to\eR$ be convex, proper
 and lower semicontinuous. Further, let $\psi : L^{2}(\Sigma)\to
 [0,\infty]$ be sublinear, proper and lower semicontinuous, and let $\hat{\psi}
 : L^{2}(\Sigma)\to [0,\infty]$ be defined by
 \begin{displaymath}
   \hat{\psi}(u)=\sup_{\psi(v)\le 1}\langle u,v\rangle_{H}
 \end{displaymath}
 for every $u\in H$. Then the subgradient $\partial\Eh$ is
 $\psi$-accretive in $L^{2}(\Sigma)$ if and only if $\partial\Eh$ is
 $\hat{\psi}$-accretive in $L^{2}(\Sigma)$. 
 \end{lemma}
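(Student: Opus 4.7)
My approach rests on three ingredients: the bipolar theorem (which symmetrises the biconditional), the resolvent characterisation of $\psi$-accretivity, and a duality exchange exploiting the subgradient structure of $\partial\Eh$.

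First, I would invoke the bipolar theorem. Since $\psi:H\to[0,\infty]$ is sublinear, proper, and lower semicontinuous, it coincides with the support function of the closed convex set $K:=\{z\in H:\hat\psi(z)\le 1\}$; taking the support function of $K$ once more reproduces $\psi$, so $\hat{\hat\psi}=\psi$. The biconditional to be proved is therefore symmetric in $\psi$ and $\hat\psi$, and it is enough to prove one direction, say $\psi$-accretivity $\Rightarrow$ $\hat\psi$-accretivity.

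Next, I would reformulate both conditions via the proximal mapping. By Lemma~\ref{lem:psi-accretive}, applied to the semigroup $S$ generated by $-\partial\Eh$, $\partial\Eh$ is $\psi$-accretive if and only if the resolvent $J_\lambda=(I+\lambda\partial\Eh)^{-1}$ satisfies $\psi(J_\lambda w_1-J_\lambda w_2)\le\psi(w_1-w_2)$ for every $w_i\in H$ and every $\lambda>0$; the analogue holds for $\hat\psi$. The task is thus to transfer $\psi$-nonexpansiveness of $J_\lambda$ into $\hat\psi$-nonexpansiveness. The decisive step uses that $J_\lambda$ is a proximal map and therefore enjoys a hidden self-adjointness encoded by Moreau's identity $w=J_\lambda w+\lambda J^{\Eh^{*}}_{1/\lambda}(w/\lambda)$, which couples the resolvent of $\partial\Eh$ with the resolvent of its Legendre--Fenchel conjugate $\partial\Eh^{*}$. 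Using the representation $\psi(x)=\sup\{\langle x,z\rangle_H:\hat\psi(z)\le 1\}$, the $\psi$-nonexpansiveness of $J_\lambda$ becomes a family of pointwise linear inequalities indexed by $z\in K$; Moreau's identity rewrites these as linear inequalities involving the resolvent of $\partial\Eh^{*}$. Polarising --- taking the supremum over $w$ in the polar set $K^{\circ}:=\{w:\psi(w)\le 1\}$ instead of $K$ --- and reapplying Moreau's identity yields an inequality for $J_\lambda$ itself, now controlled by $\hat\psi$ rather than $\psi$.

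\textbf{Main obstacle.} The crux is the duality exchange: a generic nonexpansive map that is $\psi$-contractive need not be $\hat\psi$-contractive, and the exchange works here only because $J_\lambda$ is the resolvent of a subgradient, tying $\Eh$ and $\Eh^{*}$ together via Moreau's identity. Technical care is needed because $\psi$ may take the value $+\infty$ (so the support-function suprema may fail to be attained) and because $\partial\Eh$ may be multivalued; these issues are handled by the subgradient calculus $\partial\psi(u)=\{z:\hat\psi(z)\le 1,\ \langle z,u\rangle_H=\psi(u)\}$ together with Moreau--Yosida regularisation of $\Eh$ and $\psi$, passing to the limit via the lower semicontinuity of $\psi$ and $\hat\psi$. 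The reverse implication then follows automatically from the symmetry established in the first step.
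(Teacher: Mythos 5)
First, note that the paper does not actually prove this lemma: it is quoted from B\'enilan--Picard \cite[Corollaire 2.1]{BePi79a}, so there is no internal proof to compare yours against. Judged on its own terms, your proposal gets the architecture right in two places. The bipolar reduction is sound: since $\psi$ is nonnegative, sublinear, proper and lower semicontinuous, one has $\psi(0)=0$, $\psi$ is the support function of the closed convex set $C=\{z: \langle z,\cdot\rangle_\HH\le\psi\}\ni 0$, $\hat\psi$ is the support function of the polar $C^\circ=\{v:\psi(v)\le1\}$, and hence $\hat{\hat\psi}=\psi$; so one implication suffices. The passage to resolvents is also correct, and in fact more elementary than you make it: writing $w_i=u_i+\lambda v_i$ in the definition of $\psi$-accretivity shows that it is \emph{literally} equivalent to $\psi(J_\lambda w_1-J_\lambda w_2)\le\psi(w_1-w_2)$ for all $w_1$, $w_2\in\HH$ and $\lambda>0$, by maximal monotonicity of $\partial\Eh$; Lemma~\ref{lem:psi-accretive}, which concerns the semigroup, is not needed for this.

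The gap is in the decisive step. Moreau's identity couples $J_\lambda^{\Eh}$ with the resolvent of the \emph{Fenchel conjugate} $(\Eh)^*$; that is a duality in the energy variable and is independent of the polarity $\psi\leftrightarrow\hat\psi$, which lives in the increment variable $w_1-w_2$. Knowing $\psi(Jw_1-Jw_2)\le\psi(w_1-w_2)$ and substituting $J^{(\Eh)^*}=I-J^{\Eh}$ produces no usable inequality for $\hat\psi$, and the step ``polarising over $w$ in $K^\circ$ and reapplying Moreau's identity'' is not an argument as it stands. What actually makes the exchange work is the self-adjointness of the \emph{derivative} of the resolvent: $J_\lambda=\nabla\bigl(\tfrac12\|\cdot\|_\HH^2-\lambda(\Eh)_\lambda\bigr)$ is the gradient of a convex function, so (after Moreau--Yosida smoothing) $B(w):=DJ_\lambda(w)$ is a symmetric operator with $0\le B(w)\le I$. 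Dividing the $\psi$-inequality along the segment by $t$ and using positive homogeneity and lower semicontinuity gives $\psi(B(w)v)\le\psi(v)$ for all $v$, and then the transposition
\begin{displaymath}
  \hat\psi(B(w)x)=\sup_{\psi(v)\le1}\langle B(w)x,v\rangle_\HH
  =\sup_{\psi(v)\le1}\langle x,B(w)v\rangle_\HH\le\hat\psi(x),
\end{displaymath}
valid precisely because $B(w)$ is symmetric and $\psi(B(w)v)\le\psi(v)\le1$, gives the pointwise bound; integrating $Jw_1-Jw_2=\int_0^1B\bigl(w_2+t(w_1-w_2)\bigr)(w_1-w_2)\,\dt$ and using sublinearity of $\hat\psi$ recovers $\hat\psi(Jw_1-Jw_2)\le\hat\psi(w_1-w_2)$. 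This symmetry of $DJ_\lambda$ is exactly the ``hidden self-adjointness'' you allude to, but it comes from $J_\lambda$ being a gradient map, not from Moreau's identity; as written, your central step does not go through.
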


 Second, we need the following nonlinear interpolation theorem
 due to B\'enilan and Crandall \cite{BeCr91}.

 \begin{lemma}[{\cite[Proposition 1.2]{BeCr91}}]
 \label{lem:nonlinear-interpolation}
 Let $M(\Sigma)$ be the space of equivalence classes
 of measurable functions $f: \Sigma\to \R$, equivalence meaning equality
 $\mu$-a.e. on $\Sigma$. Let $S : M(\Sigma ) \supseteq D(S)\to M(\Sigma)$ be an operator such that,
 for every $u$, $v \in D(S)$ and every $k\ge0$, one has either $u\wedge (v+k)\in D(S)$
 or $(u-k)\vee v\in D(S)$. Then $S$ satisfies
 \begin{equation*} 
 \int_{\Sigma}\psi (Su-Sv)\,\mathrm{d}\mu \le \int_{\Sigma} \psi (u-v)\,\mathrm{d}\mu
 \quad\text{for all $\psi\in \mathcal{J}_{0}$ and all $u$, $v\in D(S)$}
 \end{equation*}
 if and only if $S$ is order preserving and contractive for
 the $L^{1}$- and $L^{\infty}$-norms.
 \end{lemma}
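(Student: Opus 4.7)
My plan is to organise the twelve conditions into pairs of accretivity/contractivity statements and use Lemma~\ref{lem:psi-accretive} as the bridge between them. First, I would apply that lemma to the functionals $\|\cdot\|_{L^\infty}$, $\|\cdot\|_{L^1}$, $\|\cdot\|_{L^q}$, and $\|\cdot\|_{L^\psi}$ on $L^2(\Sigma)$, and to the integral functional $u\mapsto\int_\Sigma\psi(u)\,\mathrm{d}\mu$ for each $\psi\in\mathcal{J}_0$ (convex and lower semicontinuous by Fatou, proper because $\psi(0)=0$). This gives at once the five ``horizontal'' equivalences $(1)\Leftrightarrow(4)$, $(5)\Leftrightarrow(9)$, $(6)\Leftrightarrow(10)$, $(7)\Leftrightarrow(11)$, and $(8)\Leftrightarrow(12)$.

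Next, to connect the $L^\infty$- and $L^1$-levels, I would compute $\widehat{\|\cdot\|_{L^\infty}}=\|\cdot\|_{L^1}$ and invoke the duality principle (Lemma~\ref{lem:duality-principle}) to obtain $(4)\Leftrightarrow(5)$, hence $(1)\Leftrightarrow(9)$. With $S$ now known to be both $L^\infty$- and $L^1$-contractive, and order preserving by the standing hypothesis, the hypotheses of the interpolation Lemma~\ref{lem:nonlinear-interpolation} are satisfied by each $S(t)$ (since $D(S(t))=L^2(\Sigma)$ is trivially closed under $u\wedge(v+k)$), and its conclusion is precisely the integral inequality defining complete contractivity~(12); by the first step this also gives complete accretivity~(8).

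To close the cycle and recover every remaining intermediate condition I would specialise the complete contractivity inequality. The choice $\psi(s)=|s|\in\mathcal{J}_0$ recovers~(9); the choice $\psi(s)=(|s|-k)_+$ recovers~(1), because $\|u_1-u_2\|_\infty\le k$ makes the right-hand integral vanish and forces $|S(t)u_1-S(t)u_2|\le k$ almost everywhere; $\psi(s)=|s|^q$ gives~(10); and for an $N$-function $\psi^*$, applying the inequality to $\psi_\alpha(s):=\psi^*(|s|/\alpha)$ and invoking the Luxemburg definition of $\|\cdot\|_{L^{\psi^*}}$ delivers~(11). The parallel specialisations of~(8) give~(6) and~(7). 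Together with the first step, this closes the equivalence of (1)--(12).

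For the extrapolation claim, fix an $N$-function $\psi$. Since $\psi(s)/s\to 0$ as $s\to 0^+$, I would first establish the embedding $L^1\cap L^\infty(\Sigma)\hookrightarrow L^\psi(\Sigma)$ with the quantitative bound $\int_\Sigma\psi(|u|/\alpha)\,\mathrm{d}\mu\le (C/\alpha)\|u\|_1$ for $\alpha$ sufficiently large in terms of $\|u\|_\infty$. Thus the orbit $S(\cdot)u_0$ lies in $L^\psi$ locally boundedly, and the $L^\psi$-contractivity~(11) yields for every $v\in L^2\cap L^\psi$ the estimate
\[
\|S(t)v\|_{L^\psi}\le \|v-u_0\|_{L^\psi}+\|S(t)u_0\|_{L^\psi}<\infty,
\]
showing that $S(t)$ is a $1$-Lipschitz map $L^2\cap L^\psi\to L^\psi$. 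Extending by uniform continuity to the closure of $L^2\cap L^\psi$ in $L^\psi$ (which coincides with $L^\psi$ in the $\Delta_2$ case) produces the extrapolated family $S_\psi$; order preservation, the semigroup law, and strong continuity for $t>0$ transfer from $S$ via the uniform contractivity. The main obstacle is precisely this final extrapolation step, where density of $L^2\cap L^\psi$ in $L^\psi$ and strong continuity without a $\Delta_2$ condition require some care, whereas the algebraic equivalences among (1)--(12) follow mechanically once the three cited lemmas are correctly aligned.
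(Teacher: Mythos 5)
Your proposal does not prove the statement you were asked to prove. The statement is Lemma~\ref{lem:nonlinear-interpolation}, the B\'enilan--Crandall interpolation result for a single operator $S$ on the space $M(\Sigma)$ of measurable functions: the inequality $\int_\Sigma\psi(Su-Sv)\,\mathrm{d}\mu\le\int_\Sigma\psi(u-v)\,\mathrm{d}\mu$ for all $\psi\in\mathcal{J}_{0}$ holds if and only if $S$ is order preserving and contractive for the $L^{1}$- and $L^{\infty}$-norms. What you have written instead is a proof sketch of Theorem~\ref{thm:L1-contractivity} (the twelve-way equivalence plus the extrapolation claim), and in the middle of that sketch you explicitly \emph{invoke} Lemma~\ref{lem:nonlinear-interpolation} to pass from $L^{1}$- and $L^{\infty}$-contractivity and order preservation to complete contractivity. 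As an argument for the lemma itself this is circular, and neither direction of the claimed equivalence is actually addressed anywhere in your text. (Note also that the paper does not prove this lemma; it is quoted from \cite[Proposition~1.2]{BeCr91}.)

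A genuine proof would have to run roughly as follows. The ``only if'' direction is obtained by specialising $\psi$: the choice $\psi(s)=s^{+}$ gives order preservation (if $u\le v$ then $\int_\Sigma (Su-Sv)^{+}\,\mathrm{d}\mu\le\int_\Sigma(u-v)^{+}\,\mathrm{d}\mu=0$, hence $Su\le Sv$), the choice $\psi(s)=\abs{s}$ gives $L^{1}$-contractivity, and the choice $\psi(s)=(\abs{s}-k)^{+}$ gives $L^{\infty}$-contractivity. The ``if'' direction is the substantive interpolation step: using order preservation, $L^{1}$-contractivity and the domain hypothesis that $u\wedge(v+k)\in D(S)$ or $(u-k)\vee v\in D(S)$, one first establishes the one-parameter family of inequalities $\int_\Sigma (Su-Sv-k)^{+}\,\mathrm{d}\mu\le\int_\Sigma (u-v-k)^{+}\,\mathrm{d}\mu$ for every $k\ge 0$ (and the symmetric version with $u$ and $v$ exchanged), and then represents an arbitrary $\psi\in\mathcal{J}_{0}$ as a superposition of the elementary functions $s\mapsto(\pm s-k)^{+}$ against a positive measure arising from the second distributional derivative of $\psi$, so that the general inequality follows by integrating the elementary ones. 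None of this machinery appears in your proposal; as a proof of the stated lemma it is a complete miss, even though as an outline of the proof of Theorem~\ref{thm:L1-contractivity} it is broadly aligned with what the paper actually does there.
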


Now, we can give the proof of
Theorem~\ref{thm:L1-contractivity}.

\begin{proof}[Proof of
 Theorem~\ref{thm:L1-contractivity}]
By Lemma~\ref{lem:psi-accretive},
assertion~\eqref{thm:L1-contractivity-claim-3} is equivalent to
assertion~\eqref{thm:Linfty-contractivity-claim-1} from Theorem
\ref{thm:Linfty-contractivity}, and for the same reason assertions
\eqref{thm:L1-contractivity-claim-4} and
\eqref{thm:L1-contractivity-claim-5},
\eqref{thm:L1-contractivity-claim-4-plus} and
\eqref{thm:L1-contractivity-claim-5p},
\eqref{thm:L1-contractivity-claim-4-plus-stern} and
\eqref{thm:L1-contractivity-claim-5N}, and
\eqref{thm:L1-contractivity-claim4-plus-stern-alpha} and
\eqref{thm:L1-contractivity-claim-5-complete} are equivalent. By the
duality principle (Lemma~\ref{lem:duality-principle}), assertions
\eqref{thm:L1-contractivity-claim-3} and
\eqref{thm:L1-contractivity-claim-4} are equivalent.

By Lemma \ref{lem:nonlinear-interpolation}, and by the assumption that
$S$ is order preserving, the now equivalent
assertions~\eqref{thm:Linfty-contractivity-claim-1} and
\eqref{thm:L1-contractivity-claim-5} imply the assertion
\eqref{thm:L1-contractivity-claim-5-complete}.

Now assume that assertion \eqref{thm:L1-contractivity-claim-5-complete} holds. Then the
inequality in \eqref{ineq:completely-contractive-a} holds
for every $N$-function $\psi$, as well as for every dilation
$\psi_\alpha := \psi (\frac{\cdot}{\alpha})$ of an $N$-function $\psi$
($\alpha >0$), and for all $t\geq 0$, and $u$, $v\in L^2 (\Sigma )$. In other words, if
$\psi$ is an $N$-function, then
\begin{displaymath}
 \int_{\Sigma}\psi (\frac{S(t) u-S(t)
   v}{\alpha})\,\mathrm{d}\mu
\le \int_{\Sigma} \psi (\frac{u-v}{\alpha})\,\mathrm{d}\mu
 \quad\text{for all $\alpha>0$, $t\geq 0$ and all $u$, $v\in L^2 (\Sigma )$}.
\end{displaymath}
Taking the infimum over all $\alpha >0$, we find
\begin{displaymath}
 \norm{S(t) u - S(t) v}_{L^\psi} \leq \norm{u-v}_{L^\psi}
\quad\text{for all $t\geq 0$ and all $u$, $v\in L^2 (\Sigma )$,}
\end{displaymath}
that is, the semigroup $S$ is $L^\psi$-contractive. Hence, assertion
\eqref{thm:L1-contractivity-claim-5-complete} implies assertion
\eqref{thm:L1-contractivity-claim-5N}.

The implication
\eqref{thm:L1-contractivity-claim-5N}$\Rightarrow$\eqref{thm:L1-contractivity-claim-5p}
follows by choosing $\psi (s) = s^q$ ($q\in (1,\infty )$), and the implication
\eqref{thm:L1-contractivity-claim-5p}$\Rightarrow$\eqref{thm:L1-contractivity-claim-5}
follows from a passage to the limit ($q\to 1$). We have thus proved the
equivalence of the assertions
\eqref{thm:Linfty-contractivity-claim-1}-\eqref{thm:L1-contractivity-claim-5-complete}.

Now, assume that one of the equivalent assertions
\eqref{thm:Linfty-contractivity-claim-1}-\eqref{thm:L1-contractivity-claim-5-complete}
holds, and assume that there exists $u_0\in L^1\cap L^\infty (\Sigma )$
such that the orbit $S(\cdot )u_0$ is locally bounded from $\R_+$ with
values in $L^1\cap L^\infty (\Sigma )$. The latter assumption together
with the fact that $S$ is both $L^1$-contractive and
$L^\infty$-contractive implies that for every $u_1\in L^1 \cap L^\infty
(\Sigma )$ the orbit $S(\cdot )u_1$ is locally bounded from $\R_+$ with
values in $L^1\cap L^\infty (\Sigma )$. Now let $\psi$ be an
$N$-function. Since $L^1 \cap L^\infty (\Sigma )$ is contained and dense
in $L^\psi (\Sigma )$, since the semigroup
$S$ leaves this subspace of $L^\psi (\Sigma )$ invariant, and since $S$ is 
$L^\psi$-contractive by assertion \eqref{thm:L1-contractivity-claim-5N} 
and order preserving by assumption, the semigroup $S$ extends to an 
order-preserving semigroup $S_\psi$ of contractions on $L^\psi (\Sigma )$. 
In order to see that it is strongly continuous, it
suffices to prove strong continuity on the subspace $L^1\cap L^\infty
(\Sigma )$. 

Let $u_1\in L^1\cap L^\infty (\Sigma )$. Since the orbit $S(\cdot )u_1$
is locally bounded with values in $L^1\cap L^\infty (\Sigma )$, there
exists a constant $C\geq 0$ such that
\begin{equation*}
 \sup_{t\in [0,1]} ( \| S(t)u_1\|_{L^1} + \| S(t)u_1\|_{L^\infty} ) \leq C .
\end{equation*}
Let $\varepsilon >0$. Since $\psi$ is an $N$-function, there exists
$\delta >0$ such that 
\[
 \psi (s) \leq \varepsilon \, s \text{ for every } s\in [0,\delta] .
\]
Since the function $\psi$ is bounded on $[\delta ,C]$, there exists
$C_\delta \geq 0$ such that
\[
 \psi (s) \leq C_\delta \, s^2 \text{ for every } s\in [\delta ,C] .
\]
Hence,
\begin{align*}
  \lefteqn{\limsup_{t\searrow 0} \int_\Sigma \psi (|S(t)u_1 - u_1|)\,\mathrm{d}\mu} \\
  & \leq \limsup_{t\searrow 0} \left[ \int_{|S(t)u_1-u_1|<\delta} \varepsilon  
  \, |S(t)u_1 - u_1|\,\mathrm{d}\mu + \int_{|S(t)u_1-u_1|\geq \delta} C_\delta \, 
  |S(t)u_1 - u_1|^2 \,\mathrm{d}\mu\right] \\
  & \leq \varepsilon \, \limsup_{t\searrow 0} \| S(t)u_1-u_1\|_{L^1} 
  + C_\delta \, \limsup_{t\searrow 0} \| S(t)u_1 -u_1\|_{L^2}^2 \leq \varepsilon \, 2C .
\end{align*}
Since $\varepsilon >0$ was arbitrary, we obtain
\[
 \lim_{t\searrow 0} \int_\Sigma \psi (|S(t)u_1 - u_1|)\,\mathrm{d}\mu = 0 .
\]
Replacing $\psi$ by $\psi ({\alpha}^{-1}\,\cdot\, )$ ($\alpha >0$) in this 
equality and using the definition of the $L^\psi$-norm, we deduce
\begin{displaymath}
 \lim_{t\searrow 0} \|S(t)u_1 - u_1\|_{L^\psi} = 0 .
\end{displaymath}
This completes the proof.
\end{proof}

\begin{remark}
  If we assume in Theorem~\ref{thm:L1-contractivity} that the
  underlying measure space $(\Sigma,\mu)$ is finite, then the
  semigroup $S$ is easily seen to extrapolate to a \emph{strongly
    continuous} contraction semigroup on $L^1 (\Sigma )$, too
  (contractivity holds in general and is stated in assertion
  \eqref{thm:L1-contractivity-claim-5}).

Actually, strong continuity in $L^1 (\Sigma )$ also holds for general
measure spaces, if there is an element
$u_{0}\in L^{1}\cap L^{\infty}(\Sigma)$ such that the semigroup $S$
leaves $\{u_{0}\}$
invariant. 
We only sketch the proof. Since the resolvent $J_{1}$ of
$\partial_j\E$ is $L^{1}$-contractive on $L^1 \cap L^2 (\Sigma )$ and
since by assumption, $J_{1}u_{0}\in L^{1}\cap L^{2}(\Sigma)$, the
inverse triangle inequality implies that $J_{1}$ maps $L^{1}\cap
L^{2}(\Sigma)$ into $L^{1}(\Sigma)$. Thus $J_{1}$ has a unique
extension on $L^{1}(\Sigma)$ (again denoted by $J_1$), and so the
operator $A:=J_{1}^{-1}-I$ is $m$-accretive on $L^{1}(\Sigma)$. By the
Crandall--Liggett Theorem~\cite{CraLi71}, $-A$ generates a strongly
continuous contraction semigroup on $L^{1}(\Sigma)$, which by
construction of $A$ and the concrete form of its resolvent coincides
with $S$ on $L^{1}(\Sigma)$ by the exponential formula.
\end{remark}

\section{Examples} \label{sec.examples}

\subsection{The $p$-Laplace operator with Robin boundary conditions on general open sets} \label{sec.robin}

Let $\Omega\subseteq \R^{d}$ be an open set having finite Lebesgue
measure $\abs{\Omega}<\infty$ and $1<p<\infty$. In this
example we introduce a weak formulation of the nonlinear
parabolic Robin boundary value problem
\begin{equation}
  \label{eq:parabolic-robin}
  \begin{cases}
    \hspace{0,265cm}\partial_t u-\Delta_{p}u+g(x,u)=0 & \text{in
      $(0,\infty)\times \Omega$,}\\
   \abs{\nabla u}^{p-2}\frac{\partial u}{\partial \nu}+\beta(x,u)=0 & 
   \text{on $(0,\infty)\times \partial\Omega$,} \\
   u(0,\cdot ) = u_0 & \text{in $\Omega$,}
  \end{cases}
\end{equation}
without any regularity assumptions on the boundary of $\Omega$. For similar problems
we refer the reader to Daners and Dr\'abek \cite{DaDr09} and Warma \cite{Wa14}. Throughout this
section, we assume that $g:\Omega\times\R\to\R$ and $\beta
: \partial\Omega\times\R\to\R$ satisfy the \emph{Caratheodory conditions}:
\begin{enumerate}
\item[(i)] $g(\cdot, z)$ and $\beta(\cdot, z)$ are measurable on $\Omega$
  and on $\partial\Omega$, respectively, for every $z\in \R$,

\item[(ii)]  $g(x,\cdot)$ and $\beta(y,\cdot)$ are continuous on $\R$ for 
a.e.~$x\in \Omega$ and for a.e.~$y\in \partial\Omega$, respectively.
\end{enumerate}
In addition, we assume that 
\begin{equation}
  \label{ass:f}
  \begin{cases}
  g(\cdot,0)\in L^{2}(\Omega),\text{ and the function $z\mapsto g(x,z)$ is Lipschitz
    con-} &\\
  \text{tinuous on $\R$ with constant $L\ge0$, uniformly for a.e. $x\in \Omega$,}&
  \end{cases}
\end{equation}
and
\begin{equation}
  \label{ass:beta}
  \begin{cases}
  \text{the function $z\mapsto \beta(y,z)$ is increasing on $\R$
    for} &\\
  \text{a.e. $y\in \partial\Omega$, and there are $\alpha$, $c>0$, $r\geq 1$,
    such that }\beta(y,z)z\ge\alpha\abs{z}^{r} &\\
  \text{and }\abs{\beta(y,z)}\le c\,\abs{z}^{r-1}
 \text{ for all $z\in \R$ and a.e. $y\in \partial\Omega$.}&
 \end{cases}
\end{equation}

As a first step, we consider the elliptic nonlinear Robin problem
\begin{equation}
  \label{eq:12}
  \left\{
  \begin{aligned}
    -\Delta_{p}u+g(x,u) &= f &\quad &\text{in
      $\Omega$,}\\
   \abs{\nabla u}^{p-2}\frac{\partial u}{\partial \nu}+\beta(x,u) &= 0 
   & &\text{on $\partial\Omega$,}
  \end{aligned}
  \right.
\end{equation}
where $f\in L^{q}(\Omega)$ is a given function for some $q \geq 1$ specified
below. A general approach for dealing with elliptic Robin problems on
arbitrary open sets goes back to a theory developed by Maz'ya
(cf. Daners~\cite{Da00} in the linear case $p=2$ and $g=0$), which we wish
to review briefly.

This theory is made possible by the following inequality
(see \cite{Mz60} and \cite[Cor.~2, Sec.~4.11.1, p.258]{Mz85}),
which states that if $\Omega$ has finite Lebesgue measure, and if the
parameters $1\le p$, $q$, $r<\infty$ satisfy
\begin{equation}
  \label{eq:mazyaparam}
  (d-p)r\le p (d-1)\; \text{ and }\; q\leq r d/(d-1),
\end{equation}
then there is a constant $C=C(d,p,q,r,|\Omega|)>0$ such that
\begin{equation}
  \label{ineq:mazya}
  \norm{u}_{L^{q}(\Omega)}\leq C\,
  \left(\norm{\nabla u}_{L^{p}(\Omega)^{d}}+
  \norm{u_{|\partial\Omega}}_{L^{r}(\partial\Omega)}\right) 
\end{equation}
for all $u\in W^{1,p}(\Omega)\cap C_c(\overline{\Omega})$. Here
$C_c(\overline{\Omega})$ is the set of all functions $u\in
C(\overline \Omega)$ with compact support in $\overline\Omega$, and 
$W^{1,p} (\Omega)$ is the classical Sobolev space. 
We shall refer to inequality~\eqref{ineq:mazya} as \emph{Maz'ya's inequality}. 
This inequality motivates the introduction of the following Sobolev-type spaces. 
Firstly, for $1\le p$, $q\le \infty$ let $W^{1}_{p,q}(\Omega)$ be the Banach 
space of all $u\in L^{q}(\Omega)$ having all distributional partial derivatives
$\tfrac{\partial u}{\partial x_{1}}$, $\dots$, $\tfrac{\partial
  u}{\partial x_{d}}\in L^{p}(\Omega)$. We equip $W^{1}_{p,q}(\Omega)$
with the natural norm $\norm{u}_{W^{1}_{p,q}}:=
\norm{u}_{L^{q}(\Omega)}+\norm{\nabla u}_{L^{p}(\Omega)^{d}}$.
Secondly, we define the space 
$V_{p,r} (\Omega )$ to be the abstract completion of
\begin{equation} \label{eq:v0}
  V_{0}:=\Big\{ u\in W^{1,p}(\Omega)\cap
  C_c(\overline{\Omega})\,\Big\vert\; \norm{u}_{V_{p,r}}<\infty\Big\}
\end{equation}
with respect to the norm
\begin{equation*}
  \norm{u}_{V_{p,r}}:=\norm{\nabla u}_{L^{p}(\Omega)^{d}} + 
  \norm{u_{\vert\partial\Omega}}_{L^{r}(\partial\Omega)},
\end{equation*}
where $L^{r}(\partial\Omega):=L^{r}(\partial\Omega,\mathcal{H}^{d-1})$,
and $\mathcal{H}=\mathcal{H}^{d-1}$ denotes the $(d-1)$-dimensional 
Hausdorff measure on the boundary $\partial\Omega$. (Note that in 
\cite{Mz85}, the function space $V_{p,r} (\Omega)$ is denoted by 
$W^{1}_{p,r}(\Omega,\partial\Omega)$.)

Maz'ya's inequality~\eqref{ineq:mazya} says that if $1 \leq p$, $q$, $r <\infty$ 
satisfy \eqref{eq:mazyaparam}, then the natural embedding
\begin{equation}
\label{eq:j-0}
  j_{0} : V_{0}\to W^{1}_{p,q}(\Omega), \quad u\mapsto u 
\end{equation}
is well defined and bounded. Moreover, by definition of $V_0$, the operator
\begin{displaymath}
\iota_0 : V_0 \to W^1_{p,q}(\Omega) \times L^r(\partial\Omega),\quad
	u \mapsto (u, u_{|\partial\Omega}),
\end{displaymath}
is well defined and bounded, too, and it is
an isomorphism from $V_0$ onto its image. 
The operator $\iota_0$ then has a unique extension to a bounded linear operator 
\[
\iota: V_{p,r}(\Omega) \to W^1_{p,q}(\Omega) \times L^r(\partial\Omega)
\] 
which is again an isomorphism from $V_{p,r}(\Omega)$ onto its image.
This means we may identify $V_{p,r}(\Omega)$ with a closed linear subspace of 
$W^{1}_{p,q}(\Omega) \times L^r(\partial\Omega)$. Let $p_1 : W^{1}_{p,q}(\Omega) \times L^r(\partial\Omega) \to W^{1}_{p,q}(\Omega)$ and $p_2 : W^{1}_{p,q}(\Omega) \times L^r(\partial\Omega)\to L^r(\partial\Omega)$ be the canonical coordinate projections. We then define the bounded linear operators 
\begin{equation}\label{eq:robin-j}
j := p_1 \circ \iota : V_{p,r}(\Omega) \to W^1_{p,q}(\Omega) , 
\end{equation}
and
\begin{equation}\label{eq:tr}
 \T := p_2 \circ \iota : V_{p,r}(\Omega) \to L^r (\partial\Omega) .
\end{equation}
For example, $j$ may be regarded as the embedding of $V_{p,r}(\Omega)$ into 
$W^{1}_{p,q} (\Omega)$ induced by Maz'ya's inequality. Or, in other words, $j$ is the
bounded linear extension of the natural embedding $j_0$ from \eqref{eq:j-0}. 
In an abuse of notation, we will also use $j$ to denote the map $V_{p,r}(\Omega) 
\to L^q(\Omega)$ given by $i\circ p_1 \circ \iota$, where $i:W^1_{p,q}(\Omega) 
\to L^q(\Omega)$ is the natural embedding, if there is no danger of confusion. The operator $\T$ is a natural extension of the trace operator $u\mapsto u|_{\partial\Omega}$ defined on $V_0$, and we therefore still call $\T u$ the {\em trace} of an element $u\in V_{p,r} (\Omega )$. 

\begin{remark}
\label{rem:mazya-inj}
  There is a potential complication with the map $j$ which Maz'ya did not 
  explore in~\cite{Mz60} or~\cite{Mz85}, but which has 
  subsequently received a certain amount of attention: $j$ is not necessarily 
  injective. Since an element $u$ belongs to 
  $\kernel j$ if and only if there is a sequence $(u_{n})$ in
  $W^{1,p}(\Omega)\cap C_c(\overline{\Omega})$ such that
  \begin{equation}
    \label{eq:13}
  \begin{split}
    &\lim_{n\to\infty}\nabla u_{n}=0\text{ in $L^{p}(\Omega)^{d}$,
    }\quad\lim_{n\to\infty} u_{n}=0\text{ in $L^{q}(\Omega)$,}\\
    &\hspace{2cm}\text{and }\lim_{n\to\infty} u_{n\vert\partial\Omega}= w
    \text{ in $L^{r}(\partial\Omega)$}
  \end{split}
  \end{equation}
  for some $w\in L^{r}(\partial\Omega)$, the map $j$ being injective is
  equivalent to $w=0$ whenever \eqref{eq:13} holds.  This is certainly
  true if, for example, $\Omega$ is a bounded Lipschitz domain, since in 
  that case we have a trace inequality (see, for
  instance,~\cite{Nc67}); but of course such an inequality does not 
  hold on arbitrary open sets. This important point was first raised by Daners 
  in~\cite{Da00}; soon afterwards an example of an $\Omega$ for 
  which $j$ is not injective was constructed by Warma~\cite{Wa02phd}. 
  This issue reemerged some time later when Arendt and ter Elst 
  \cite{ArEl11,ArEl12} introduced a generalisation of the 
  notion of trace valid  on an arbitrary open set, based in large part on 
  Maz'ya's inequality.
\end{remark}

There is another possible definition of trace, which is a further generalisation 
(to $p$, $q$, $r \neq 2$) of the generalisation of trace in \cite{ArEl11}.  
In particular, the following definition agrees with \cite[Section~1]{ArEl11} when
$p=q=r=2$.

\begin{definition}
\label{def:trace}
  For $1\le p$, $q$, $r\le \infty$, following \cite{ArEl11}, we say that $\varphi\in
  L^{r}(\partial\Omega)$ is a \emph{weak trace} of $u\in
  W^{1}_{p,q}(\Omega)$ if there is a sequence
  $(u_{n})$ in $W^{1}_{p,q}(\Omega)\cap C_c(\overline{\Omega})$ such that
  $u_{n} \to u$ in $W^{1}_{p,q}(\Omega)$ and
  $u_{n\vert\partial\Omega} \to \varphi$ in $L^{r}(\partial\Omega)$.
\end{definition}

In other words, $\varphi \in L^r(\partial\Omega)$ is a weak trace of 
$u \in W^{1}_{p,q}(\Omega)$ if and only if the pair $(u,\varphi) 
\in \iota(V_{p,r}(\Omega))$.

\begin{remark}
\label{rem:trace-identify}
  (a) It is known that there are domains on which functions may have 
  multiple weak traces in the sense of Definition~\ref{def:trace}; this is 
  immediately seen to be the case exactly when the map $j$ is not injective, 
  which in particular is a property of the \emph{domain} $\Omega$ and not 
  the function(s) in question.
  This can happen if $\partial\Omega$ becomes too ``disconnected'' 
  from $\Omega$ in a sense which can be made precise using the notion of 
  relative capacity; we refer to \cite{ArEl11} for more details in the case $p=
  q=r=2$. Of course, functions in $V_{p,r}(\Omega)$ always have unique traces 
  in the sense of \eqref{eq:tr}, since the map $\T$ is well defined.

  (b) Weak traces in the sense of Definition~\ref{def:trace} depend intrinsically on all 
  three parameters $p$, $q$, $r$. We expect it is possible that a given function 
  in $W^1_{p,q}(\Omega)$ may have multiple traces for some $r$ and only 
  one (or even none) for other $r$, although we do not explore this here.

  (c) If $p$, $r \geq 1$ satisfy the first inequality in \eqref{eq:mazyaparam}, 
  then one can always find a $q\geq 1$ such that $V_{p,r}(\Omega)$ 
  maps into $W^1_{p,q}(\Omega)$ (take $q=rd/(d-1)$) so that the maps 
  $j$ and $\T$ from \eqref{eq:robin-j} and \eqref{eq:tr}, respectively, are 
  well defined. Moreover, if $u\in C_c^\infty(\R^d)$, then, approximating 
  $u$ by itself wherever necessary, we may identify $u$ canonically with an 
  element of $V_{p,r}(\Omega)$ and $W^{1}_{p,q}(\Omega)$, and 
  $u_{|\partial\Omega}$ is both a trace and a weak trace of $u$.

  (d) The definition \eqref{eq:tr} of the trace of a function in $V_{p,r}
  (\Omega)$ can be easily extended to any pair $p$, $r\geq 1$, 
  even if they do not satisfy the first inequality in \eqref{eq:mazyaparam}, 
  since one can always identify $V_{p,r}(\Omega)$ canonically with a 
  closed subset of $L^p(\Omega)^d \times L^r(\partial\Omega)$; the 
  trace is simply the composition of this embedding and the projection onto $L^r(\partial\Omega)$.
  In the sequel, however, we will always 
  assume that \eqref{eq:mazyaparam} holds, and so we will tend not to
  distinguish between the various possible notions of trace.
\end{remark}

We next have a couple more results concerning the space $V_{p,r}(\Omega)$.
The following lemma is quite standard, but we state it for later use.

\begin{lemma}
  \label{lem:density}
  Let $p$, $r\ge 1$ and suppose $\mathcal{H}^{d-1}(K) < \infty$ for any 
  compact $K \subseteq \partial\Omega$. Then the set $\{ \hat u|_{\partial\Omega}:
  \hat u \in C_c^\infty(\R^d)\}$ is a subset of $\T V_{p,r}(\Omega)$ and
  is dense in $L^r(\partial\Omega)$. In particular, $\T V_{p,r}(\Omega)$ 
  is dense in $L^r(\partial\Omega)$.
\end{lemma}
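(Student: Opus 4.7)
The plan is to verify the two claims of the lemma separately; both are standard but the hypothesis on the local finiteness of $\mathcal{H}^{d-1}$ enters crucially at just one point.

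\textbf{Inclusion.} First I would take $\hat{u}\in C_c^\infty(\R^d)$, set $u:=\hat{u}|_\Omega$, and verify that $u$ lies in the set $V_0$ of~\eqref{eq:v0}. Since $\hat{u}$ is smooth with compact support in $\R^d$, clearly $u\in W^{1,p}(\Omega)\cap C_c(\overline{\Omega})$ and $\norm{\nabla u}_{L^p(\Omega)^d}<\infty$. For the boundary term, $u|_{\partial\Omega}=\hat{u}|_{\partial\Omega}$ is uniformly bounded by $\norm{\hat{u}}_\infty$ and is supported in the compact set $K:=\supp(\hat{u})\cap\partial\Omega$, which by hypothesis has $\mathcal{H}^{d-1}(K)<\infty$; hence $\norm{u|_{\partial\Omega}}_{L^r(\partial\Omega)}\le \norm{\hat{u}}_\infty\,\mathcal{H}^{d-1}(K)^{1/r}<\infty$. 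Thus $u\in V_0$, and via the canonical embedding into $V_{p,r}(\Omega)$ we have $\iota(u)=\iota_0(u)=(u,\hat{u}|_{\partial\Omega})$, so $\T u=\hat{u}|_{\partial\Omega}$.

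\textbf{Density.} For the second claim I would proceed in three standard steps. First, $\partial\Omega$ is a closed subset of $\R^d$, hence $\sigma$-compact, and the hypothesis that $\mathcal{H}^{d-1}(K)<\infty$ for every compact $K\subseteq\partial\Omega$ means that the Borel measure $\mathcal{H}^{d-1}$ restricted to $\partial\Omega$ is Radon; by the usual Lusin/regularity argument, $C_c(\partial\Omega)$ is therefore dense in $L^r(\partial\Omega)$. Second, given $\varphi\in C_c(\partial\Omega)$, Tietze's extension theorem applied to the closed subset $\partial\Omega\subseteq\R^d$, followed by multiplication with a smooth cut-off equal to $1$ on a neighbourhood of $\supp(\varphi)$, produces $\tilde\varphi\in C_c(\R^d)$ with $\tilde\varphi|_{\partial\Omega}=\varphi$. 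Third, the mollifications $\tilde\varphi_\varepsilon:=\tilde\varphi\ast\rho_\varepsilon\in C_c^\infty(\R^d)$ converge uniformly to $\tilde\varphi$ on $\R^d$, with all supports contained in a fixed compact set $K'$; since $\mathcal{H}^{d-1}(K'\cap\partial\Omega)<\infty$, this uniform convergence is automatically promoted to convergence in $L^r(\partial\Omega)$, and so $\tilde\varphi_\varepsilon|_{\partial\Omega}\to\varphi$ in $L^r(\partial\Omega)$. Concatenating these three approximations proves the density.

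\textbf{The hard part.} There is no real obstacle here: everything reduces to textbook measure theory (Lusin/regularity), the Tietze extension theorem and mollification. The only hypothesis-sensitive step is the first one in the density argument, which is exactly where the assumption that $\mathcal{H}^{d-1}$ is locally finite on $\partial\Omega$ is used; without it, $L^r(\partial\Omega)$ would not be well approximated by continuous compactly supported functions, and neither part of the lemma would follow from this strategy.
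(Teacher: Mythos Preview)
Your argument is correct and follows the same overall architecture as the paper: first the inclusion $\{\hat u|_{\partial\Omega}:\hat u\in C_c^\infty(\R^d)\}\subseteq\T V_{p,r}(\Omega)$, then density of $C_c(\partial\Omega)$ in $L^r(\partial\Omega)$ via the fact that $\mathcal{H}^{d-1}$ is a locally finite Borel regular (hence Radon) measure, and finally approximation of $C_c(\partial\Omega)$ by restrictions of $C_c^\infty(\R^d)$ functions. The only genuine difference is in this last step: the paper invokes the Stone--Weierstra{\ss} theorem (the restrictions form a point-separating, nowhere-vanishing subalgebra of $C_c(\partial\Omega)$), whereas you go through Tietze extension and mollification. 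Your route is a little more hands-on and has the mild advantage of being constructive, while the paper's route is shorter; both are entirely standard and equally adequate here. Your treatment of the inclusion is also more explicit than the paper's, which simply refers back to an earlier remark.
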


\begin{proof}
By the Stone-Weierstra{\ss} theorem \cite[Chapter~0]{Yo95}, the set 
of restrictions of $C_c^\infty(\R^d)$ functions to $\partial\Omega$ is dense in 
$C_c(\partial\Omega)$. Since $\mathcal{H}^{d-1}$ is a Borel regular measure 
\cite{EvGa92}, which is finite on every compact set, $C_c(\partial\Omega)$ is 
dense in $L^r(\partial\Omega)$ (see~\cite[Theorem~3.14]{Ru74}).

Now, since we may identify every $\hat u \in C_c^\infty(\R^d)$
with an element of $V_{p,r}(\Omega)$ as in Remark~\ref{rem:trace-identify}(c) in
such a way that $\T(\hat u) = \hat u|_{\partial\Omega}$, we may
identify the set $\{ \hat u|_{\partial\Omega}: \hat u \in C_c^\infty(\R^d)\}$
with a subset of $\T V_{p,r}(\Omega)$ and conclude that $\T V_{p,r}(\Omega)$
is dense in $L^r(\partial\Omega)$.
\end{proof}

Next, we note that a density argument shows that $V_{p,r}(\Omega)$ has 
a lattice structure whose ordering is induced by that of the space $V_0$ defined in 
\eqref{eq:v0} in the natural way. We omit the easy proof, which follows directly 
from the fact that $V_0$ inherits the lattice structure of $W^{1,p}(\Omega)$ 
and $C_c(\overline\Omega)$.

\begin{lemma}\label{lem:lattice}
The space $V_{p,r}(\Omega)$ is a lattice for any $1 \leq p$, $r < \infty$, and the lattice 
operations are continuous. Moreover, 
assuming $p$, $q$, $r\geq 1$ satisfy \eqref{eq:mazyaparam}, then $\iota$ is a lattice isomorphism onto its range, and in particular $\iota(V_{p,r} 
(\Omega ))$ is a sublattice of $W^{1}_{p,q} (\Omega )\times L^r (\partial\Omega)$
equipped with its natural ordering. As a consequence, $j$ and $\T$ are lattice homomorphisms. 
\end{lemma}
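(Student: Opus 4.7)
The plan is to first establish the lattice structure on the dense subspace $V_0$ defined in \eqref{eq:v0}, then extend by continuity to the completion $V_{p,r}(\Omega)$, and finally read off the statements about $\iota$, $j$ and $\T$.

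First, both $W^{1,p}(\Omega)$ and $C_c(\overline\Omega)$ are lattices under the pointwise operations $u\vee v = \max\{u,v\}$ and $u\wedge v = \min\{u,v\}$, so their intersection $V_0$ is closed under these operations. Writing $u\vee v = \tfrac12(u+v+|u-v|)$ and $u\wedge v = \tfrac12(u+v-|u-v|)$, it suffices to check that the map $u\mapsto |u|$ is continuous on $V_0$ in the $V_{p,r}$-norm. For the gradient part, one uses the standard fact that $u\mapsto |u|$ is continuous $W^{1,p}(\Omega)\to W^{1,p}(\Omega)$, together with the pointwise inequality $|\nabla|u|| \leq |\nabla u|$ a.e.; for the boundary part, the identity $|u|_{|\partial\Omega} = |u_{|\partial\Omega}|$ (valid for $u\in V_0$ since its elements are continuous up to the boundary) combined with continuity of $w\mapsto |w|$ in $L^r(\partial\Omega)$ suffices. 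In particular the lattice operations on $V_0$ are (uniformly) continuous with respect to the $V_{p,r}$-norm.

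Second, given $u,v\in V_{p,r}(\Omega)$ and Cauchy sequences $(u_n),(v_n)$ in $V_0$ representing them, the sequences $(u_n\vee v_n)$ and $(u_n\wedge v_n)$ are again Cauchy in $V_0$ by the continuity just established, so they converge in $V_{p,r}(\Omega)$. A standard argument shows the limits are independent of the chosen representatives, so we may define $u\vee v$ and $u\wedge v$ as these limits. Axioms of a lattice pass to these limits by continuity, so $V_{p,r}(\Omega)$ is a lattice and the operations are continuous.

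Third, for $u\in V_0$ we have $\iota_0(u) = (u, u_{|\partial\Omega})$, and since restriction of a continuous function commutes with $\vee$ and $\wedge$, we obtain $\iota_0(u\vee v) = (u,u_{|\partial\Omega})\vee (v,v_{|\partial\Omega}) = \iota_0(u)\vee\iota_0(v)$, and similarly for $\wedge$; thus $\iota_0$ is a lattice homomorphism into $W^1_{p,q}(\Omega)\times L^r(\partial\Omega)$. By density of $V_0$ in $V_{p,r}(\Omega)$ and continuity of all operations involved, this property transfers to $\iota$; since $\iota$ is already known to be an isomorphism onto its image, it is in fact a lattice isomorphism, and $\iota(V_{p,r}(\Omega))$ is a sublattice of the product. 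Finally, the coordinate projections $p_1$ and $p_2$ of a product lattice are lattice homomorphisms, so the compositions $j = p_1\circ\iota$ and $\T = p_2\circ\iota$ are lattice homomorphisms as well.

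The main technical point is the passage from $V_0$ to $V_{p,r}(\Omega)$, which hinges on showing that $u\mapsto |u|$ is continuous in the $V_{p,r}$-norm. The boundary term is what requires care, but since elements of $V_0$ are continuous up to $\overline{\Omega}$, the trace of $|u|$ literally coincides with the pointwise absolute value of the trace, avoiding any subtleties with weak traces discussed in Remark~\ref{rem:trace-identify}.
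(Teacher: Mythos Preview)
The paper omits the proof of this lemma, merely noting that it ``follows directly from the fact that $V_0$ inherits the lattice structure of $W^{1,p}(\Omega)$ and $C_c(\overline{\Omega})$''. Your outline is therefore exactly the intended one, and your first and third steps are fine. There is, however, a real gap in the passage from $V_0$ to the completion in your second step.

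Your parenthetical claim that $u\mapsto |u|$ is \emph{uniformly} continuous on $V_0$ in the $V_{p,r}$-norm is false for $p>1$. On $\Omega=(-1,1)$ take $u_\lambda(x)=\lambda x$ and $v_\lambda(x)=\lambda(x+\varepsilon)$; then $\|u_\lambda-v_\lambda\|_{V_{p,r}}$ is of order $\lambda\varepsilon$ while $\|\nabla|u_\lambda|-\nabla|v_\lambda|\|_{L^p}$ is of order $\lambda\varepsilon^{1/p}$, and choosing $\lambda=\varepsilon^{-\alpha}$ with $\tfrac1p<\alpha<1$ sends the first quantity to $0$ and the second to $\infty$. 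Mere continuity on the incomplete space $V_0$ does not let you conclude that $(u_n\vee v_n)$ is Cauchy when $(u_n)$ and $(v_n)$ are.

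The fix is to run the limiting argument in the ambient complete space rather than in $V_0$. You already know that $\iota:V_{p,r}(\Omega)\to W^1_{p,q}(\Omega)\times L^r(\partial\Omega)$ is an isomorphism onto a closed subspace, and Maz'ya's inequality makes the $V_{p,r}$-norm equivalent to the norm induced from the product. The lattice operations are continuous on the \emph{complete} space $W^1_{p,q}(\Omega)\times L^r(\partial\Omega)$ (this is the standard fact that $u\mapsto u^+$ is continuous, though not uniformly continuous, on first-order Sobolev spaces, combined with the obvious Lipschitz continuity on $L^q$ and $L^r$). Given Cauchy sequences $(u_n),(v_n)$ in $V_0$ with limits $u,v\in V_{p,r}(\Omega)$, the sequence $\iota_0(u_n\vee v_n)=\iota_0(u_n)\vee\iota_0(v_n)$ therefore converges in $W^1_{p,q}\times L^r$ to $\iota(u)\vee\iota(v)$, and the limit lies in the closed range $\iota(V_{p,r}(\Omega))$. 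Pulling back through $\iota^{-1}$ gives the element $u\vee v\in V_{p,r}(\Omega)$, and the rest of your argument then goes through unchanged.
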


%
%

With this background, we can return to studying our elliptic boundary value
problem~\eqref{eq:12}. We are now in a position to introduce the notion
of weak solutions of the elliptic Robin boundary value
problem~\eqref{eq:12} on general open sets.

\begin{definition}
  Suppose $1\le p$, $r<\infty$ satisfy~\eqref{eq:mazyaparam} with
  $q=2$ and let $f\in L^{q^{\mbox{}_{\prime}}}(\Omega)$, where $q'=\frac{q}{q-1}$.
  Then we call $\hat{u}\in V_{p,r} (\Omega )$ a \emph{weak solution} of
  the elliptic Robin boundary value problem~\eqref{eq:12} if for all
  $\hat{v}\in V_{p,r} (\Omega )$,
 \begin{displaymath}
   \int_{\Omega}\abs{\nabla
     \hat{u}}^{p-2}\nabla\hat{u}\nabla\hat{v}\,\dx +
   \int_\Omega g(x,j(\hat u)) j(\hat v)\,\dx+
   \int_{\partial\Omega}\beta(y,\T\hat{u})\T\hat{v}\,\dH
   = \int_\Omega f \, j(\hat{v}) \, \dx . 
 \end{displaymath}
\end{definition}

For all $z \in \R$, $x\in \Omega$ and $y \in\partial\Omega$ we set  
\begin{displaymath}
G(x,z)  := \int_{0}^{z} g(x,s)\,\ds \qquad \text{and}\qquad 
\mathcal{B}(y,z) := \int_{0}^{z}\beta(y,s)\,\ds . 
\end{displaymath}

\begin{lemma}
  \label{propo:energy-robin-problem}
  Suppose $p$, $r>1$ satisfy \eqref{eq:mazyaparam} with $q=2$, and let
  $j$ be the map given by \eqref{eq:robin-j}. Then the
  functional $\E : V_{p,r} (\Omega )\to \R$ defined by
  \begin{equation}
    \label{eq:20}
    \E(\hat{u}) = \tfrac{1}{p}\int_{\Omega}\abs{\nabla
    \hat{u}}^{p}\,\dx + \int_{\Omega} G(x,j(\hat{u}))\,\dx
   + \int_{\partial\Omega}\mathcal{B}(y,\T\hat{u})\,\dH \quad  
  \end{equation}
  for every $\hat{u}\in V_{p,r} (\Omega )$ is continuously differentiable
  and $j$-elliptic. Moreover, its $j$-subgradient is densely defined and given by
\begin{align*}
  &\partial_{j}\E = \Big\{ (u,f)\in L^2(\Omega )\times L^2 (\Omega)\;\Big\vert\;
  \exists\, \hat{u}\in V_{p,r}(\Omega) \text{
    s.t. }j(\hat{u})=u\text{ and }\forall\;\hat{v}\in V_{p,r}(\Omega)\Big. \\
  & \hspace{0,7cm}\Big.\int_\Omega \abs{\nabla\hat{u}}^{p-2}
  \nabla\hat{u}\nabla\hat{v}\dx + \int_\Omega g(x,j(\hat{u})) j(\hat{v}) \dx +
  \int_{\partial\Omega} \beta (y,\T\hat{u})\T\hat{v}\dH = \int_\Omega f
  j(\hat{v})\dx\Big\}.
\end{align*}
\end{lemma}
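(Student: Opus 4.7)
My proof plan is to verify the four claims in order: $C^{1}$-smoothness of $\E$, $j$-ellipticity, the formula for $\partial_{j}\E$, and density of its domain.

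\textbf{Step 1 (smoothness).} I would split $\E$ into its three summands. The $p$-Dirichlet term $\tfrac{1}{p}\int_{\Omega}|\nabla\hat{u}|^{p}\dx$ is the classical $p$-energy, $C^{1}$ on $W^{1,p}(\Omega)$ and hence on $V_{p,r}(\Omega)$ via continuity of $\hat{u}\mapsto\nabla\hat{u}$, with derivative $\hat{v}\mapsto\int_{\Omega}|\nabla\hat{u}|^{p-2}\nabla\hat{u}\cdot\nabla\hat{v}\dx$. For the interior term, (ass:f) yields the growth $|G(x,z)|\le|g(x,0)||z|+\tfrac{L}{2}z^{2}$, so the associated Nemytskii operator is $C^{1}$ on $L^{2}(\Omega)$ with derivative $u\mapsto g(\cdot,u)$; composition with the bounded linear $j:V_{p,r}(\Omega)\to L^{2}(\Omega)$ (using $q=2$) gives a $C^{1}$ map. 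For the boundary term, the growth $|\beta(y,z)|\le c|z|^{r-1}$ from (ass:beta) gives $|\mathcal{B}(y,z)|\le \tfrac{c}{r}|z|^{r}$, and the analogous Nemytskii argument produces a $C^{1}$ map on $L^{r}(\partial\Omega)$, which composed with the bounded trace $\T$ is $C^{1}$ on $V_{p,r}(\Omega)$. Summing yields the claimed $\E'$.

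\textbf{Step 2 ($j$-ellipticity).} I fix $\omega>L$. For convexity of $\E_{\omega}$: the $p$-Dirichlet term is convex; the boundary term is convex because $z\mapsto\beta(y,z)$ is non-decreasing and so $z\mapsto\mathcal{B}(y,z)$ is convex; and the remaining part $\int_{\Omega}G(x,j(\hat{u}))\dx+\tfrac{\omega}{2}\|j(\hat{u})\|_{L^{2}}^{2}$ is convex since $z\mapsto g(x,z)+\omega z$ is non-decreasing (by the $L$-Lipschitz hypothesis), hence $z\mapsto G(x,z)+\tfrac{\omega}{2}z^{2}$ is convex. For coercivity: monotonicity of $\beta$ combined with $\beta(y,z)z\ge\alpha|z|^{r}$ gives $\mathcal{B}(y,z)\ge\tfrac{\alpha}{r}|z|^{r}$, while a Young inequality absorbs the $g(x,0)j(\hat{u})$ contribution into $\tfrac{\omega-L}{2}\|j(\hat{u})\|_{L^{2}}^{2}$. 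A bound $\E_{\omega}(\hat{u})\le c$ therefore forces $\|\nabla\hat{u}\|_{L^{p}}$ and $\|\T\hat{u}\|_{L^{r}}$ to be bounded, i.e.\ $\hat{u}$ bounded in $V_{p,r}(\Omega)$. Since $V_{p,r}(\Omega)$ is reflexive (it embeds as a closed subspace of the reflexive $L^{p}(\Omega)^{d}\times L^{r}(\partial\Omega)$), sublevels are relatively weakly compact; with the lower semicontinuity provided by Step~1, $\E$ is $j$-elliptic.

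\textbf{Step 3 (formula for $\partial_{j}\E$).} Given the G\^ateaux differentiability and explicit derivative from Step~1, Lemma~\ref{lem.easy.ident}(b) identifies $\partial_{j}\E$ as precisely the set displayed in the statement.

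\textbf{Step 4 (density).} Since $\E$ is convex, proper, lower semicontinuous and $j$-elliptic, Corollary~\ref{cor.eh.identification} produces $\Eh$ with $D(\Eh)=j(V_{p,r}(\Omega))$ and $\omega I+\partial_{j}\E=\partial\Eh$ maximal monotone. The standard subgradient density result \cite[Proposition~2.11]{Br73} gives $\overline{D(\partial\Eh)}=\overline{D(\Eh)}$ in $L^{2}(\Omega)$, so it suffices to show $j(V_{p,r}(\Omega))$ is dense in $L^{2}(\Omega)$. This follows because $C_{c}^{\infty}(\R^{d})$ canonically embeds into $V_{p,r}(\Omega)$ (Remark~\ref{rem:trace-identify}(c)), $j$ acts on this subspace as the natural restriction/inclusion into $L^{2}(\Omega)$, and $C_{c}^{\infty}(\R^{d})|_{\Omega}$ is dense in $L^{2}(\Omega)$. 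The main obstacle I anticipate is Step~2: orchestrating the choice of $\omega$ so that the sign-indefinite Lipschitz nonlinearity $g$ yields a convex contribution after shifting, while simultaneously preserving enough quadratic slack to absorb the inhomogeneity $g(\cdot,0)$ into coercivity coming from the boundary growth $\alpha|z|^{r}$.
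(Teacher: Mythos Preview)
Your proposal is correct and follows essentially the same approach as the paper: both establish $C^1$-smoothness via standard Nemytskii arguments, verify $j$-ellipticity by showing $\E_\omega$ is convex and coercive for $\omega>L$ (the paper uses $\omega=L$ for convexity and $\omega=L+\varepsilon$ for coercivity), invoke Lemma~\ref{lem.easy.ident}(b) for the subgradient formula, and deduce density from $C_c^\infty\subseteq j(V_{p,r}(\Omega))$ together with the abstract identification $\overline{D(\partial_j\E)}=\overline{j(D(\E))}$. One small slip in Step~4: you write ``$\E$ is convex'', but only $\E_\omega$ is; this is harmless since Corollary~\ref{cor.eh.identification} (and Corollary~\ref{cor.extended-quasiconvex-functional}) require only $j$-ellipticity, so the argument stands as written.
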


As mentioned earlier, here we commit a mild abuse of notation by considering $j$ to 
be the composite map $V_{p,r}(\Omega) \to W^1_{p,2}(\Omega) \hookrightarrow 
L^2(\Omega)$. In light of the lemma, we consider that $u \in L^2(\Omega)$ is a 
weak solution to \eqref{eq:12} for a given $f \in L^2(\Omega)$ if and only if the pair 
$(u,f) \in \partial_j\E$ as in Lemma~\ref{propo:energy-robin-problem}.

\begin{proof}
  By hypothesis on $p$ and $r$, the operator $j$ is linear and bounded from $V_{p,r} (\Omega)$ 
  into $L^{2}(\Omega)$.
  It is a standard exercise to show that the assumptions~\eqref{ass:f} on $g$ and \eqref{ass:beta} on $\beta$
  imply that $\E$ is (among other things) continuously differentiable
  (and in particular G\^ateaux differentiable) and that, for every
  $\hat{u}$, $\hat{v}\in V_{p,r} (\Omega )$,
  \begin{displaymath}
    \E'(\hat{u}) \, \hat{v} = \int_\Omega
    \abs{\nabla\hat{u}}^{p-2}\nabla\hat{u}\nabla\hat{v}\,\dx +
    \int_\Omega g(x,j(\hat{u}))j(\hat{v})\,\dx + \int_{\partial\Omega}
    \beta (y,\T\hat{u})\T\hat{v}\,\dH.
  \end{displaymath}
  Since $g$ was assumed to be Lipschitz continuous in the
  second variable, a.e. uniformly with respect to the first one
  (see~\eqref{ass:f}), for a.e. $x\in\Omega$, the real-valued function
  $z\mapsto g(x,z)+Lz$ is increasing on $\R$. Thus the primitive
  $z\mapsto G(x,z)+\frac{L}{2}z^{2}$, with $L\geq 0$ as in
  \eqref{ass:f}, is convex on $\R$ for a.e. $x\in \Omega$. It follows
  that the functional $\E_{L}$ is convex, and one easily
  verifies that $\E_{L+\varepsilon}$ is coercive for every $\varepsilon>0$.
  As a consequence, $\E$ is $j$-elliptic, and by Lemma \ref{lem.easy.ident} (b), 
  the $j$-subgradient $\partial_j\E$ takes the form as in the statement. 

  Observe that the range of $j$ contains the space of test functions
  $C_{c}^{\infty}(\Omega)$ (Remark~\ref{rem:trace-identify}(c)), and is 
  therefore dense in $L^{2}(\Omega)$, cf.~\cite[Theorem 3.14]{Ru74}). Since the 
  closure of $D(\partial\Eh ) = D(\partial_j\E )$ (Corollary \ref{cor.extended-quasiconvex-functional}) 
  and $D(\Eh) = j(V_{p,r} (\Omega ))$ (Theorem \ref{thm.eh.identification}) coincide by
  \cite[Proposition 2.11, p.39]{Br73}, we deduce that $\partial_j\E$ is densely defined.
\end{proof}

Before stating our main generation result, we introduce the following
notation. Let $S_D$ and $S_N$ be the semigroups generated by the
Dirichlet-$p$-Laplace operator and the Neumann-$p$-Laplace operator on
$\Omega$, respectively. These operators are, by definition, the
subgradients of the associated functionals $\E_D$,
$\E_N : L^2 (\Omega ) \to\eR$ given by
\begin{displaymath}
 \E_D (u) := 
 \begin{cases}
   \tfrac{1}{p}\displaystyle \int_\Omega |\nabla u|^p\,\dx & \text{if } u\in \mathring{W}^{1}_{p,2} (\Omega ) ,\\[0,4cm]
   +\infty & \text{else},
 \end{cases}
\end{displaymath}
and
\begin{displaymath}
 \E_N (u) := 
 \begin{cases}
   \tfrac{1}{p} \displaystyle\int_\Omega |\nabla u|^p\,\dx & \text{if } u\in \tilde{W}^{1}_{p,2} (\Omega ) ,\\[0,4cm]
   +\infty & \text{else},
 \end{cases}
\end{displaymath}
respectively, where $\mathring{W}^{1}_{p,2} (\Omega )$ is the closure
of $C_c^\infty(\Omega)$ in $W^{1}_{p,2} (\Omega )$, while
$\tilde{W}^{1}_{p,2} (\Omega )$ is the closure of the space
$W^{1}_{p,2} (\Omega ) \cap C_c (\overline{\Omega})$ in
$W^{1}_{p,2} (\Omega )$. In this case of course we have classical
subgradients, that is, the map $j$ is the identity map on
$L^2 (\Omega )$. It is well known, and easy to verify with the help of
the results from Section \ref{sec.invariance}, that both semigroups
$S_D$ and $S_N$ are positive, order preserving and
$L^\infty$-contractive. Moreover, $S_D\preccurlyeq S_N$.

\begin{theorem}
\label{thm:robin-semigroup}
Let $\E$ be the functional defined in~\eqref{eq:20} and suppose that
$p$, $r>1$ satisfy \eqref{eq:mazyaparam} with $q=2$. Then the operator
$-\partial_{j}\E$ generates a strongly continuous semigroup $S$ on
$L^{2}(\Omega)$. If $g(x,\,\cdot\,)$ is increasing for almost every
$x\in\Omega$, then the semigroup is a semigroup of contractions, order
preserving, $L^\infty$-contractive on $L^2(\Omega)$ and extrapolates
to an order-preserving contraction semigroup $S_q$ on $L^q(\Omega)$
for any $q\in [1, \infty]$, which is strongly continuous for
$q\in [1,\infty)$ and weak*-continuous if $q=\infty$. If, in addition,
$g(\cdot ,0)=0$, then the semigroups $S_q$ are positive. Finally, if
$g=0$, then $S_D\preccurlyeq S$.
\end{theorem}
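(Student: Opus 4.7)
The plan is to deduce each claim from the general results of Sections~\ref{sec.subgradient}--\ref{sec.invariance} applied to the functional $\E$ and the map $j$ of Lemma~\ref{propo:energy-robin-problem}. Generation is immediate: Lemma~\ref{propo:energy-robin-problem} shows $\E$ is proper, continuously differentiable (hence lower semicontinuous) and $j$-elliptic, and $j(V_{p,r}(\Omega))$ contains $C_c^\infty(\Omega)$ so is dense in $L^2(\Omega)$; thus Theorem~\ref{thm:generation} yields the strongly continuous semigroup $S$ on $L^2(\Omega)$. When $g(x,\cdot)$ is increasing, the primitive $G(x,\cdot)$ is convex, and then $\E$ itself is convex (as the sum of three convex terms), so by the classical Brezis theorem cited in Theorem~\ref{thm:generation} the semigroup $S$ consists of contractions.

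For order preservation I would apply Corollary~\ref{cor:order-preservingness}\eqref{cor:order-preservingness-claim-3} with $\hat v_1 := \hat u_1\wedge\hat u_2$ and $\hat v_2 := \hat u_1\vee\hat u_2$ in $V_{p,r}(\Omega)$: these satisfy $j(\hat v_1)=j(\hat u_1)\wedge j(\hat u_2)$ and $j(\hat v_2)=j(\hat u_1)\vee j(\hat u_2)$ by Lemma~\ref{lem:lattice}, and the energy equality follows pointwise from the identities $|\nabla(u\wedge v)|^p+|\nabla(u\vee v)|^p=|\nabla u|^p+|\nabla v|^p$ a.e.\ and $F(u\wedge v)+F(u\vee v)=F(u)+F(v)$. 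For $L^\infty$-contractivity via Theorem~\ref{thm:Linfty-contractivity}\eqref{thm:Linfty-contractivity-claim-3}, I would take
\[
\hat v_i \;:=\; \hat u_i \;\mp\; \tfrac{1}{2}\bigl[(\hat u_1-\hat u_2-\alpha)^+ - (\hat u_2-\hat u_1-\alpha)^+\bigr],
\]
constructing each truncation as an element of $V_{p,r}(\Omega)$ by approximating the $\hat u_i$ in $V_0$: the approximate truncations remain in $V_0$ because they preserve both the Lipschitz character and the compact support in $\overline\Omega$, and convergence in the Maz'ya norm follows from the continuity of Lipschitz Nemytskii operators on Sobolev functions. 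On $\{|u_1-u_2|\le\alpha\}$ the pair is unchanged, while on its complement $(u_1,u_2)\mapsto(v_1,v_2)$ preserves the mean $(u_1+u_2)/2$ and shrinks the spread, so $F(v_1)+F(v_2)\le F(u_1)+F(u_2)$ for any convex $F$; applied termwise to $\tfrac{1}{p}|\cdot|^p$, $G(x,\cdot)$ and $\mathcal B(y,\cdot)$ this gives the required energy inequality.

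Extrapolation then follows from Theorem~\ref{thm:L1-contractivity}: the two conditions already verified supply its equivalents, and the orbit-boundedness hypothesis in $L^1\cap L^\infty$ is obtained by choosing $u_0=U$, a stationary solution of $\omega U+\partial_j\E(U)\ni 0$ furnished by the minimisation argument of Theorem~\ref{thm:max-monoton-partial-j-E} combined with a Moser/De~Giorgi $L^\infty$-bound as in \cite{DaDr09} (and $U\in L^1$ since $|\Omega|<\infty$), whence $S(t)U=U$ and the orbit is trivial. Positivity under $g(\cdot,0)=0$ then follows from Theorem~\ref{thm:positive-semigroups}\eqref{thm:positive-semigroups-claim-1} with $\hat v:=\hat u^+$: since $G'(x,0)=g(x,0)=0$ and the growth condition~\eqref{ass:beta} with $r>1$ forces $\beta(y,0)=0$, both convex primitives $G(x,\cdot)$ and $\mathcal B(y,\cdot)$ attain their minimum at $0$, giving $\E(\hat u^+)\le\E(\hat u)$ termwise.

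Finally, for $g=0$ and $S_D\preccurlyeq S$, I would apply Corollary~\ref{cor:domination-property}\eqref{cor:domination-property-claim3} with $\hat v_1 := (\hat u_1\vee(-\hat u_2))\wedge\hat u_2$ and $\hat v_2 := |\hat u_1|\vee\hat u_2$: a case distinction on the sign and size of $\hat u_1$ relative to $\hat u_2\geq 0$ confirms that $j(\hat v_1)$ and $j(\hat v_2)$ have the stipulated values; since $\T\hat u_1=0$ and $\T\hat u_2\ge 0$ one computes $\T\hat v_1=0$ (so that $\hat v_1\in\mathring W^{1}_{p,2}(\Omega)$) and $\T\hat v_2=\T\hat u_2$, so that the boundary contributions cancel, and the residual gradient inequality $|\nabla\hat v_1|^p+|\nabla\hat v_2|^p\le|\nabla\hat u_1|^p+|\nabla\hat u_2|^p$ a.e.\ reduces again to a pointwise lattice/symmetrisation computation. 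I expect the principal technical obstacle throughout to be the $V_{p,r}$-regularity of the various truncations: constants are not in $V_{p,r}(\Omega)$, so every cut-off involving a real parameter must first be performed at the level of the dense subspace $V_0$, where the Lipschitz character and the compact support in $\overline\Omega$ are preserved simultaneously, before passing to the limit in the Maz'ya norm.
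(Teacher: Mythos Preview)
Your approach matches the paper's almost exactly: generation via Lemma~\ref{propo:energy-robin-problem} and Theorem~\ref{thm:generation}; convexity of $\E$ when $g(x,\cdot)$ is increasing; order preservation via Corollary~\ref{cor:order-preservingness} with $\hat v_i=\hat u_1\wedge\hat u_2,\ \hat u_1\vee\hat u_2$; $L^\infty$-contractivity via Theorem~\ref{thm:Linfty-contractivity} with the truncated pair (your formula is algebraically the same as the paper's min/max expression, and your ``mean preserved, spread shrinks'' convexity argument is a clean way to see the energy inequality the paper merely calls ``analogous''); positivity via $\hat v=\hat u^+$; and domination $S_D\preccurlyeq S$ via Corollary~\ref{cor:domination-property} after embedding $u_1\in\mathring W^{1}_{p,2}(\Omega)$ as the element of $V_{p,r}(\Omega)$ with zero trace. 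Your final caveat about performing all truncations first in $V_0$ and passing to the limit is well placed; the paper packages this into Lemma~\ref{lem:lattice}.

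There is, however, one genuine slip in your extrapolation step. A solution $U$ of $\omega U+\partial_j\E(U)\ni 0$ with $\omega>0$ is \emph{not} a fixed point of the semigroup $S$ generated by $-\partial_j\E$; it satisfies $J_{1/\omega}(0)=U$ for the resolvent, but in general $S(t)U\neq U$, so your ``whence $S(t)U=U$'' fails. What you actually need for the orbit-boundedness hypothesis of Theorem~\ref{thm:L1-contractivity} is $0\in\partial_j\E(U)$, i.e.\ $U=j(\hat U)$ for a minimiser $\hat U$ of $\E$ itself. This is available here: when $g(x,\cdot)$ is increasing, $G(x,\cdot)$ is convex with $G(x,0)=0$, hence $G(x,z)\ge g(x,0)z$, and combining this with the boundary coercivity $\mathcal B(y,z)\ge\frac{\alpha}{r}|z|^r$ from~\eqref{ass:beta} and Maz'ya's inequality shows that $\E$ is not only convex but coercive on $V_{p,r}(\Omega)$ (using $p,r>1$). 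So a minimiser $\hat U$ exists and $S(t)j(\hat U)=j(\hat U)$; your Moser/De~Giorgi route to $U\in L^\infty$ then completes the argument. The paper, for its part, simply invokes Theorem~\ref{thm:L1-contractivity} without explicitly addressing the orbit-boundedness hypothesis, so on this point you are being more careful than the paper, once the slip is corrected.
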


\begin{proof}
  The generation result follows immediately from Lemma
  \ref{propo:energy-robin-problem} and Theorem~\ref{thm:generation}.

  For the rest of the proof, assume that $g$ is increasing. Then the
  functional $\E$ is convex and the semigroup is a semigroup of
  contractions \cite[Th\'eor\`eme 3.1]{Br73}. In order to show that it
  is also order preserving, we apply Corollary
  \ref{cor:order-preservingness}. Let $\hat{u}_1$,
  $\hat{u}_2\in V_{p,r} (\Omega ) = D(\E )$. Then, since $j$ is a
  lattice homomorphism,
\begin{align*}
  & j(\hat{u}_1)\wedge j(\hat{u}_2) = j(\hat{u}_1 \wedge \hat{u}_2) \text{ and } j(\hat{u}_1)\vee j(\hat{u}_2) = j(\hat{u}_1 \vee \hat{u}_2) \text{ with} \\
  & \hat{u}_1 \wedge \hat{u}_2 , \, \hat{u}_1 \vee \hat{u}_2 \in V_{p,r} (\Omega ) ,
\end{align*}
and, noting that the orderings on $V_{p,r}(\Omega)$ and $L^2(\Omega)$ are consistent (cf.~Lemma~\ref{lem:lattice}), 
\allowdisplaybreaks{
\begin{align*}
 &\E ( \hat{u}_1 \wedge \hat{u}_2 ) + \E (\hat{u}_1 \vee \hat{u}_2 )\\
  &\qquad = \tfrac{1}{p} \int_\Omega |\nabla\hat{u}_1|^p \mathds{1}_{\{ \hat{u}_1\leq \hat{u}_2\}}\,\dx + \tfrac{1}{p} \int_\Omega |\nabla \hat{u}_2|^p \mathds{1}_{\{ \hat{u}_1 > \hat{u}_2\}}\,\dx \\
 & \phantom{=} + \int_\Omega G(x,j(\hat{u}_1)) \mathds{1}_{\{ j(\hat{u}_1)\leq j(\hat{u}_2)\}}\,\dx 
 + \int_\Omega G(x,j(\hat{u}_2) )  \mathds{1}_{\{ j(\hat{u}_1) > j(\hat{u}_2)\}}\,\dx \\
 & \phantom{=} + \int_{\partial\Omega} {\mathcal B} (y,\T\hat{u}_1) \mathds{1}_{\{ \T\hat{u}_1\leq \T\hat{u}_2\}}w
 + \int_{\partial\Omega} {\mathcal B} (y,\T\hat{u}_2 )  \mathds{1}_{\{ \T\hat{u}_1 > \T\hat{u}_2\}}\,\textrm{d}\mathcal{H}\\
 & \phantom{=} + \tfrac{1}{p} \int_\Omega |\nabla\hat{u}_1|^p \mathds{1}_{\{ \hat{u}_1 > \hat{u}_2\}}\,\dx + \tfrac{1}{p} \int_\Omega |\nabla \hat{u}_2|^p \mathds{1}_{\{ \hat{u}_1 \leq \hat{u}_2\}}\,\dx \\
 & \phantom{=} + \int_\Omega G(x,j(\hat{u}_1)) \mathds{1}_{\{ j(\hat{u}_1) > j(\hat{u}_2)\}}\,\dx 
 + \int_\Omega G(x,j(\hat{u}_2) )  \mathds{1}_{\{ j(\hat{u}_1) \leq j(\hat{u}_2)\}}\,\dx \\
 & \phantom{=} + \int_{\partial\Omega} {\mathcal B} (y,\T\hat{u}_1) \mathds{1}_{\{ \T\hat{u}_1> \T\hat{u}_2\}}\,\textrm{d}\mathcal{H} + \int_{\partial\Omega} {\mathcal B} (y,\T\hat{u}_2 )  \mathds{1}_{\{ \T\hat{u}_1 \leq \T\hat{u}_2\}}\,\textrm{d}\mathcal{H} \\
 & = \E (\hat{u}_1 ) + \E (\hat{u}_2 ) .
\end{align*}
}
By Corollary \ref{cor:order-preservingness}, the semigroup is order preserving.

Next, we show that the semigroup is $L^\infty$-contractive. Let $\hat{u}_1$, $\hat{u}_2\in V_{p,r} (\Omega )$ and $\alpha>0$ a real number. Then  
\begin{displaymath}
\hat v_1 = \Big(\hat u_{1}\vee \tfrac{\hat u_{1}+\hat u_{2}-\alpha}{2}\Big)\wedge
    \Big(\tfrac{\hat u_{1}+\hat u_{2}+\alpha}{2}\Big)
=
\begin{cases}
\hat u_1 \qquad\quad &\text{if $|\hat u_1 - \hat u_2|\leq \alpha$}\\
\tfrac{\hat u_1+\hat u_2-\alpha}{2} &\text{if $\hat u_1-\hat u_2<-\alpha$}\\
\tfrac{\hat u_1+\hat u_2+\alpha}{2} &\text{if $\hat u_1-\hat u_2>\alpha$}
\end{cases}
\end{displaymath}
and
\begin{displaymath}
\hat v_2 = \Big(\hat u_{2}\wedge \tfrac{\hat u_{1}+\hat u_{2}+\alpha}{2}\Big)\vee
    \Big(\tfrac{\hat u_{1}+\hat u_{2}-\alpha}{2}\Big)
=
\begin{cases}
\hat u_2 \qquad\quad &\text{if $|\hat u_2 - \hat u_1|\leq \alpha$}\\
\tfrac{\hat u_1+\hat u_2-\alpha}{2} &\text{if $\hat u_2-\hat u_1<-\alpha$}\\
\tfrac{\hat u_1+\hat u_2+\alpha}{2} &\text{if $\hat u_2-\hat u_1>\alpha$}
\end{cases}
\end{displaymath}
are in $V_{p,r}(\Omega)$ and satisfy the first two equalities in
Theorem \ref{thm:Linfty-contractivity}, assertion
\eqref{thm:Linfty-contractivity-claim-3}, with $u_1 = j(\hat{u}_1)$
and $u_2 = j(\hat{u}_2)$; here again, we have used that $j$ is a
lattice homomorphism. It remains to check that
$\E(\hat v_1) + \E(\hat v_2) \leq \E(\hat u_1) + \E(\hat u_2)$ in
order to see that assertion \eqref{thm:Linfty-contractivity-claim-3}
of Theorem \ref{thm:Linfty-contractivity} is fulfilled. As this is an
argument analogous to the one above, we omit it. By
Theorem~\ref{thm:Linfty-contractivity}, the semigroup is
$L^\infty$-contractive. The fact that the semigroup extrapolates to
the whole scale of $L^q$-spaces now follows immediately from the
preceding two steps and Theorem \ref{thm:L1-contractivity}.

Now assume in addition that $g(\cdot ,0)=0$ almost everywhere. This
assumption and the assumption that $g(x,\cdot )$ is increasing for
almost every $x\in\Omega$ imply that the primitive $G$ is
positive. For the same reason, using assumption \eqref{ass:beta},
${\mathcal B}$ is positive, too. Now let
$\hat{u}\in V_{p,r} (\Omega ) = D(\E )$. Then
$j(\hat{u})^+ = j(\hat{u}^+)$ with $\hat{u}^+\in V_{p,r} (\Omega )$
and
\begin{align*}
  \E (\hat{u}^+ ) & = \tfrac{1}{p} \int_\Omega |\nabla \hat{u}|^p \mathds{1}_{\{ j(\hat{u}) >0\}}\dx 
  + \int_\Omega G(x,j(\hat{u})) \mathds{1}_{\{ j(\hat{u}) >0\}}\dx 
  + \int_{\partial\Omega} {\mathcal B} (y,\T \hat{u}) \mathds{1}_{\{ \T \hat{u} >0\}}
  \textrm{d}\mathcal{H} \\
                  & \leq \E (\hat{u} ) .
\end{align*}
Positivity of the semigroup now follows from Theorem \ref{thm:positive-semigroups}. 

We turn to the last statement and assume now that $g=0$. Here we shall
apply Corollary~\ref{cor:domination-property}. For the domination
$S_D \preccurlyeq S$, let $u_1 \in \mathring{W}^{1,p} (\Omega )$ and
$\hat{u}_2\in V_{p,r} (\Omega)$ with
$j(\hat{u}_2) \in L^2 (\Omega )^+$. Then
$\hat{u}_1 := (u_1,0)\in \iota (V_{p,r}(\Omega ))$, that is, with an
abuse of notation, we have found an element
$\hat{u}_1 \in V_{p,r} (\Omega )$ such that $j(\hat{u}_1) =
u_1$.
Clearly,
$(|{u}_1|\wedge j(\hat{u}_2)) \sign({u}_1) \in \mathring{W}^{1}_{p,2}
(\Omega )$, $|\hat{u}_1|\vee \hat{u}_2 \in V_{p,r} (\Omega )$, and
\begin{align*}
 \lefteqn{\E_D ((|u_1| \wedge j(\hat{u}_2)) \sign (u_1)) + \E (|\hat{u}_1|\vee \hat{u}_2)} \\
 &\qquad = \tfrac{1}{p} \int_{\Omega} |\nabla u_1|^p \, \mathds{1}_{\{|u_1|\leq j(\hat{u}_2)\}}\,\dx
 	+ \tfrac{1}{p} \int_\Omega |\nabla j(\hat{u}_2)|^p \, \mathds{1}_{\{|u_1| >j(\hat{u}_2)\}}\,\dx \\
 &\qquad \phantom{=} + \tfrac{1}{p} \int_{\Omega} |\nabla u_1|^p \, \mathds{1}_{\{ |u_1|> j(\hat{u}_2)\}}\,\dx + \tfrac{1}{p} \int_{\Omega} |\nabla j(\hat{u}_2)|^p \, \mathds{1}_{\{|u_1|\leq j(\hat{u}_2)\}}\,\dx \\
 &\qquad \phantom{=} + \int_{\partial\Omega} {\mathcal B}(y,\T \hat{u}_1) \mathds{1}_{\{|\T\hat{u}_1| \leq \T\hat{u}_2\}}\,\textrm{d}\mathcal{H}
 + \int_{\partial\Omega}  {\mathcal B}(y,\T \hat{u}_2) \mathds{1}_{\{|\T\hat{u}_1| > \T\hat{u}_2\}}\,\textrm{d}\mathcal{H} \\
 &\qquad = \tfrac{1}{p} \int_\Omega |\nabla u_1|^p\,\dx + \tfrac{1}{p} \int_\Omega |\nabla j(\hat{u}_2)|^p\,\dx
   + \int_{\partial\Omega}  {\mathcal B}(y,\T \hat{u}_2) \mathds{1}_{\{|\T\hat{u}_1| > \T\hat{u}_2\}}\,\textrm{d}\mathcal{H} \\
 &\qquad \leq \E_D (u_1) + \E (\hat{u}_2 ) .
\end{align*}
Hence, by Corollary~\ref{cor:domination-property}, $S_D$ is dominated by $S$.
\end{proof}

\begin{remark}
  The article \cite{ChWa12} characterises all positive, order
  preserving, local semigroups $S$ generated by negative subgradients
  and satisfying $S_D\preccurlyeq S\preccurlyeq S_N$. These semigroups
  are generated by realisations of the $p$-Laplace operator with
  general Robin boundary conditions which formally include the class
  of Robin boundary conditions which we consider in this
  example. However, in \cite{ChWa12}, the set $\Omega$ is supposed to
  be a Lipschitz domain. The above example shows that the first
  domination still holds under relaxed assumptions on $\Omega$. It is
  therefore a natural question as to whether the domination
  $S \preccurlyeq S_N$ also holds in our context. The decisive
  question is whether Corollary~\ref{cor:domination-property} (3)
  holds for (all functions in) the spaces $V_{p,r}(\Omega)$ and
  $\tilde{W}_{p,2}^{1}(\Omega)$, which in turn seems to depend on
  whether $V_{p,r}(\Omega)$ has certain rather subtle lattice-type
  properties. Since a technical investigation at this point would take
  us too far afield, we leave it as an open question.
\end{remark}

\subsection{The $p$-Dirichlet-to-Neumann operator} \label{sec.Dirichlet-to-Neumann}

As a weak variational problem, in a certain sense our second example
bears considerable similarity to the nonlinear Robin problem
considered above. It will also use much of the same theory, in
particular (keeping the notation from the previous section) the space
$V_{p,r} (\Omega )$ for $p$, $r\ge 1$ and the trace operator
$\T: V_{p,r}(\Omega) \to L^r (\partial\Omega)$.

However, in this case the map $j$ from our abstract theory \emph{is}
the trace $\T$, rather than the map
$V_{p,r}(\Omega) \to L^{q}(\Omega)$, meaning its non-injectivity is
intrinsic to the structure of the operator and not a consequence of
$\Omega$ having rough boundary. We shall again make minimal regularity
assumptions on $\partial\Omega$, but this approach is also new in the
case of smooth boundary (apart from the recent work \cite{Ha15}; see
also Remark~\ref{rem:smooth}). For more details on the
$p$-Dirichlet-to-Neumann operator on Lipschitz domains, we refer to
\cite{Ha15}.

Here we assume that $\Omega\subseteq \R^{d}$ is an open set of finite
Lebesgue measure $|\Omega |$ for which the topological boundary
$\partial\Omega$ has locally finite $(d-1)$-dimensional Hausdorff
measure, that is, 
\begin{displaymath}
  \mathcal{H}^{d-1} (K) < \infty \text{ for every compact } K\subseteq\partial\Omega ,
\end{displaymath}
although we expect this could be weakened.  We
assume that $g:\Omega\times\R\to\R$ is a function satisfying the
Caratheodory conditions (i) and (ii) from the previous example, as
well as the growth condition
\begin{equation}
  \label{ass:f2}
  \begin{cases}
    \text{there exists } \alpha\in L^{\frac{2d}{d+1}}(\Omega) \text{ and } C\geq 0 \text{ such that} & \\
    |g(x,z)| \leq \alpha (x) + C \, |z|^{\frac{d+1}{d-1}} \text{ for
      all } z\in\R \text{ and a.e. } x\in \Omega . &
  \end{cases}
\end{equation} 
Our principal aim is to prove well-posedness of the parabolic
initial-boundary value problem
\begin{equation}
  \label{eq:11}
  \begin{cases}
    -\Delta_{p}\hat{u}+g(x,\hat{u})=0 & \text{in $(0,\infty)\times \Omega$,}\\
     \partial_t \hat{u} + \abs{\nabla \hat{u}}^{p-2} \partial_\nu \hat{u} =0
     & \text{on $(0,\infty)\times \partial\Omega$,}\\
     \hat{u}(0,\cdot ) = u_0 & \text{on } \partial\Omega,
  \end{cases}
\end{equation}
for a given initial value $u_0\in L^2 (\partial\Omega )$. This is
closely associated with the Dirichlet-to-Neumann map $\Lambda_{p,g}$
which -- formally speaking -- maps the trace
$\T \hat{u} \in D(\Lambda_{p,g})\subseteq L^2 (\partial\Omega)$
(Dirichlet data) of a weak solution $\hat{u}$ of the elliptic problem
\begin{equation*}
    -\Delta_{p}\hat{u} + g(x,\hat{u}) = 0 \text{ in $\Omega$,}\\
\end{equation*}
to the outer $p$-normal derivative
$\abs{\nabla \hat{u}}^{p-2} \partial_\nu \hat{u}$ (Neumann data); see below.\\

As mentioned above, we consider the space $V_{p,2} (\Omega )$
introduced in the previous example (in particular, $r=2$). For general
$g$ we assume $p\geq \frac{2d}{d+1}$ so that Mazya's condition
\eqref{eq:mazyaparam} is fulfilled for $q = \frac{2d}{d-1}$; note that
$q-1 = \frac{d+1}{d-1}$ and $q' = \frac{2d}{d+1}$ are the exponents
appearing in the growth condition \eqref{ass:f2}. It will be
convenient to write $\hat{u}$ for elements in $V_{p,2} (\Omega )$,
$u= \T \hat{u}$ for their traces, and $j(\hat{u})$ for their
embeddings into $L^{\frac{2d}{d-1}} (\Omega )$. We mention that if
$g=0$, then the condition on $p$ can be relaxed to $p>1$ since in this
case we do not need the embedding $j$, as one can see from the
definitions of weak solution, $p$-Dirichlet-to-Neumann
operator and underlying energy functional. However, by definition,
elements of $\hat{u}\in V_{p,2} (\Omega )$ still admit both a natural
gradient $\nabla \hat{u}\in L^p (\Omega )$ and a trace $u = \T \hat{u}
\in L^2(\partial\Omega)$ since $V_{p,2}(\Omega)$ may be identified
with a closed subset of $L^p(\Omega)^d \times L^2(\partial\Omega)$
in a natural way; see Remark~\ref{rem:trace-identify}(d). \\

We call a function $\hat{u}: \R_+ \to V_{p,2} (\Omega )$ a {\em weak solution} of 
the problem \eqref{eq:11} if there exists a function $u\in C(\R_+ ;L^2 (\partial\Omega )) 
\cap W^{1,2}_{loc} ((0,\infty );L^2 (\partial\Omega ))$
such that $\T \hat{u} = u$ almost everywhere, $u(0) = u_0$, and for every $\hat{v}\in V_{p,2} (\Omega )$
one has
\[
 \int_\Omega |\nabla \hat{u}|^{p-2} \nabla \hat{u} \nabla \hat{v}\,\dx + \int_\Omega g(x,j(\hat{u})) j(\hat{v})\,\dx = - \int_{\partial\Omega} \partial_t u \T \hat{v}\textrm{d}\mathcal{H} 
 \text{ for a.e. } t\in\R_+ .
\]
If we define the Dirichlet-to-Neumann map $\Lambda_{p,g}$ by
\begin{align*}
 \Lambda_{p,g} & := \Big\{ (u,f)\in L^2 (\partial\Omega ) \times L^2 (\partial\Omega ) \Big\vert\; \exists\; \hat{u}\in V_{p,2} (\Omega )
  \text{ s.t. } \T \hat{u} = u \text{ and }\Big.\\
  & \hspace{1cm}\Big. \int_\Omega |\nabla \hat{u}|^{p-2} \nabla \hat{u} \nabla \hat{v} + \int_\Omega g(x,j(\hat{u})) j(\hat{v})\,\dx = \int_{\partial\Omega} f \T \hat{v}\,\textrm{d}\mathcal{H}\quad \forall\; \hat{v}\in V_{p,2} (\Omega ) \Big\}
\end{align*}
then we see that $\Lambda_{p,g}$ is a single-valued operator, and $\hat{u} : \R_+ \to V_{p,2} (\Omega )$ is a weak solution of the problem \eqref{eq:11}
if and only if $u := \T \hat{u}$ is a solution to the abstract Cauchy problem
\[
 \dot u + \Lambda_{p,g} u = 0 \text{ on } (0,\infty ) , \quad u(0) = u_0 .
\]
We show that this latter problem is in fact a gradient system and $\Lambda_{p,g}$ can be realised as the $\T$-subgradient of an appropriate functional.

\begin{lemma}
  \label{propo:energy-of-pDN-with-f}
  Suppose $g$ is a function satisfying~\eqref{ass:f2}, and that $g(x,\cdot )$
  is monotonically increasing for almost every $x\in\Omega$. Assume further $p\ge
  \frac{2d}{d+1}$ and let $G(x,z):=\int_{0}^{z}g(x,s)\,\ds$ for every
  $z\in \R$ and almost every $x\in \Omega$.
  Then the functional $\E : V_{p,2}(\Omega)\to \R$ defined by
  \begin{equation}
    \label{energy-p-DN-with-f}
    \E(\hat{u}) = \tfrac{1}{p}\int_{\Omega}\abs{\nabla \hat{u}}^{p}\,\dx
    + \int_{\Omega}G(x,j(\hat{u}))\,\dx
  \end{equation}
  for every $\hat{u}\in V_{p,2}(\Omega)$ is convex, continuously differentiable, and $\T$-elliptic. 
  Moreover, the $\T$-subgradient $\partial_{\T}\E$ of
  $\E$ is densely defined and coincides with the Dirichlet-to-Neumann map
  $\Lambda_{p,g}$. If $g=0$, then the condition on $p$ may be relaxed to $p>1$.
\end{lemma}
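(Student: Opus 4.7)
The plan is to adapt the argument used for Lemma~\ref{propo:energy-robin-problem}, the main new ingredients being the bounded embedding of $V_{p,2}(\Omega)$ provided by Maz'ya's inequality together with the growth condition~\eqref{ass:f2}. The assumption $p\ge 2d/(d+1)$ is precisely what is needed so that, with $r=2$ and $q=2d/(d-1)$, Maz'ya's parameter condition~\eqref{eq:mazyaparam} is satisfied; the corresponding linear map $j:V_{p,2}(\Omega)\to L^{q}(\Omega)$ is then bounded, and the Nemytski operator $u\mapsto g(\cdot,u)$ takes $L^q(\Omega)$ into $L^{q'}(\Omega)$ continuously, since the exponent $q-1=(d+1)/(d-1)$ in~\eqref{ass:f2} is the critical conjugate growth exponent.

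First I would verify that $\E$ is well defined and of class $C^1$ on $V_{p,2}(\Omega)$. Standard arguments using~\eqref{ass:f2}, H\"older's inequality and continuity of the Nemytski operator yield continuous Fr\'echet differentiability with
\[
\E'(\hat u)\hat v=\int_\Omega|\nabla\hat u|^{p-2}\nabla\hat u\cdot\nabla\hat v\,\dx+\int_\Omega g(x,j(\hat u))\,j(\hat v)\,\dx.
\]
Convexity of $\E$ then follows because $t\mapsto t^p$ is convex and because the monotonicity of $g(x,\cdot)$ makes each $G(x,\cdot)$ convex, so the composition with the linear map $j$ preserves convexity.

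The most delicate step, which I expect to be the main obstacle, is verifying $\T$-ellipticity, i.e.\ coercivity of $\E_\omega=\E+\tfrac{\omega}{2}\|\T\cdot\|_{L^2(\partial\Omega)}^2$ for some $\omega>0$ on the reflexive Banach space $V_{p,2}(\Omega)$. The subtle point is bounding the $G$-term from below, as a naive use of~\eqref{ass:f2} only produces an upper bound involving the super-quadratic power $|j(\hat u)|^q$. To get around this I would exploit the convexity of $G(x,\cdot)$: since $g(\cdot,0)\in L^{q'}(\Omega)$ by~\eqref{ass:f2}, the subgradient inequality at $0$ gives $G(x,z)\ge g(x,0)\,z$, whence by H\"older and Maz'ya's inequality
\[
\int_\Omega G(x,j(\hat u))\,\dx\ge -C\bigl(\|\nabla\hat u\|_{L^p(\Omega)^d}+\|\T\hat u\|_{L^2(\partial\Omega)}\bigr).
\]
Combined with the positive nonlinear terms in $\E_\omega$, Young's inequality (valid for any $p>1$ and any $\omega>0$) forces the sublevels of $\E_\omega$ to be norm bounded, hence weakly relatively compact. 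When $g=0$, the $G$-term disappears, Maz'ya's inequality is no longer required, and the same coercivity argument applies for every $p>1$; this yields the final sentence of the lemma.

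Having established convexity, G\^ateaux differentiability and $\T$-ellipticity, Lemma~\ref{lem.easy.ident}(b) identifies $\partial_\T\E$ as the set of pairs $(u,f)\in L^2(\partial\Omega)^2$ for which some $\hat u\in V_{p,2}(\Omega)$ satisfies $\T\hat u=u$ and $\E'(\hat u)\hat v=\langle f,\T\hat v\rangle_{L^2(\partial\Omega)}$ for every $\hat v\in V_{p,2}(\Omega)$; comparing this with the definition of $\Lambda_{p,g}$ yields $\partial_\T\E=\Lambda_{p,g}$. For density of $D(\partial_\T\E)$ in $L^2(\partial\Omega)$, I would invoke Corollary~\ref{cor.extended-quasiconvex-functional}, Theorem~\ref{thm.eh.identification} and \cite[Proposition 2.11]{Br73} to reduce to density of $D(\Eh)=\T V_{p,2}(\Omega)$, which is provided by Lemma~\ref{lem:density}, available because $\mathcal{H}^{d-1}$ is locally finite on $\partial\Omega$ by hypothesis.
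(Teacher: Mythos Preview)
Your proposal is correct and follows essentially the same approach as the paper: continuous differentiability via the Nemytski operator, convexity from monotonicity of $g(x,\cdot)$, $\T$-ellipticity via coercivity of $\E_\omega$, identification of $\partial_\T\E$ through Lemma~\ref{lem.easy.ident}(b), and density via Lemma~\ref{lem:density} together with Corollary~\ref{cor.extended-quasiconvex-functional}, Theorem~\ref{thm.eh.identification} and \cite[Proposition~2.11]{Br73}. The one place where you add real content is the coercivity step: the paper simply asserts that, thanks to Maz'ya's inequality, ``the term $\int_\Omega G(x,j(\hat u))\,\dx$ can be controlled by the $V_{p,2}$-norm of $\hat u$'', whereas you make this precise by exploiting convexity of $G(x,\cdot)$ to obtain the linear lower bound $G(x,z)\ge g(x,0)\,z$ and then absorbing the resulting linear error via Young's inequality --- this is a clean and correct way to justify what the paper leaves implicit.
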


\begin{proof}
  It is easily checked that the functional $\E$ defined
  by~\eqref{energy-p-DN-with-f} is continuously differentiable on
  $V_{p,2}(\Omega)$, and
\[
 \E'(\hat{u} ) \hat{v} = \int_\Omega |\nabla \hat{u}|^{p-2} \nabla \hat{u} \nabla \hat{v}\,\dx 
 + \int_\Omega g(x,j(\hat{u})) j(\hat{v})\,\dx 
\]
for every $\hat{u}$, $\hat{v}\in V_{p,2} (\Omega )$. Moreover, since $g (x,\cdot )$ is monotonically increasing for almost
  every $x\in\Omega$, its primitive $G(x,\cdot )$ is convex for almost every
  $x\in\Omega$. Hence, $\E$ is convex, too. Using the definition of the space $V_{p,2} (\Omega )$ together with Maz'ya's  inequality, we see that
  the shifted functional 
  \begin{displaymath}
    \E_{\omega}(\hat{u}) := \E(\hat{u})+
    \tfrac{\omega}{2}\norm{\T \hat{u}}_{L^{2}(\partial\Omega)}^{2}
  \end{displaymath}
  is coercive for every $\omega >0$, since under our assumptions on
  $p$ and $q$, Maz'ya's inequality shows that the term
  $\int_\Omega G(x,j(\hat{u}))\,\dx$ can be controlled by the
  $V_{p,2}$-norm of $\hat{u}$. In other words, $\E$ is
  $\T$-elliptic. The equality $\partial_{\T} \E = \Lambda_{p,g}$
  follows from the identification of the Fr\'echet derivative of $\E$
  above and from Lemma~\ref{lem.easy.ident} (b).  Finally, since the
  effective domain of $\E$ is the entire space $V_{p,2} (\Omega )$,
  and since by Lemma~\ref{lem:density} the trace operator $\T$ has
  dense range in $L^2 (\partial\Omega)$, one argues similarly as in the proof of
  Lemma \ref{propo:energy-robin-problem} (using Corollary \ref{cor.extended-quasiconvex-functional},
  Theorem \ref{thm.eh.identification} and \cite[Proposition 2.11, p.39]{Br73}) 
  that the $\T$-subgradient of $\E$
  is densely defined. The case $g=0$ and $p>1$ is treated similarly.
\end{proof}

\begin{remark} \label{rem:smooth} If $\Omega$ has Lipschitz boundary
  and $g=0$, then our construction coincides with the variational
  definition of the Dirichlet-to-Neumann map associated with
  $-\Delta_{p}$ (cf.~\cite{Ha15}, for example, or \cite{ArEl11,ArEl12}
  in the linear case $p=2$).  In this case, the trace inequality
  together with Maz'ya's inequality~\eqref{ineq:mazya} implies that
  $V_{p,2}(\Omega)$ coincides with the Sobolev space
  $W^{1}_{p,2}(\Omega)$, up to an equivalent norm. Moreover,
  $\ker(\T)$ coincides exactly with $W^{1,p}_{0}(\Omega)$, the closure
  of $C_c^\infty (\Omega)$ in the $W^{1,p}$-norm.
\end{remark}

Our desired generation result now follows from Theorem~\ref{thm:generation}. 

\begin{theorem}\label{th:d-to-n-gen}
  Let $p$ and $g$ be as in Lemma~\ref{propo:energy-of-pDN-with-f}.
  Then the Dirichlet-to-Neumann operator $\Lambda_{p,g}$ generates a
  strongly continuous semigroup $S$ of contractions on
  $L^2(\partial\Omega)$. If $g=0$, then the condition on $p$ may be
  relaxed to $p>1$.
\end{theorem}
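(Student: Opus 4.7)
The plan is to reduce the statement directly to the abstract generation theorem, Theorem~\ref{thm:generation}, applied with $\V = V_{p,2}(\Omega)$, $\HH = L^2(\partial\Omega)$, $j = \T$, and $\E$ the functional from \eqref{energy-p-DN-with-f}. First I would verify the hypotheses: Lemma~\ref{propo:energy-of-pDN-with-f} already gives that $\E$ is convex, continuously differentiable (hence lower semicontinuous) and $\T$-elliptic on the reflexive Banach space $V_{p,2}(\Omega)$, and that $\E$ is proper (the effective domain is all of $V_{p,2}(\Omega)$). The trace map $\T : V_{p,2}(\Omega) \to L^2(\partial\Omega)$ is bounded and linear, hence continuous, and therefore weak-to-weak continuous (cf.~Remark~\ref{rem.banach.space.case}). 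Thus all assumptions of Theorem~\ref{thm:generation} are satisfied.

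Next I would apply Theorem~\ref{thm:generation} to conclude that $-\partial_\T \E$ generates a strongly continuous semigroup $S$ on $\overline{D(\Eh)} = \overline{\T(V_{p,2}(\Omega))}$. By Lemma~\ref{propo:energy-of-pDN-with-f}, $\partial_\T \E = \Lambda_{p,g}$, so it is indeed $\Lambda_{p,g}$ that generates $S$. To see that the semigroup is defined on all of $L^2(\partial\Omega)$, I would invoke Lemma~\ref{lem:density} (with $r=2$, whose hypothesis $\mathcal{H}^{d-1}(K)<\infty$ for compact $K\subseteq\partial\Omega$ is exactly the standing assumption of this subsection), which yields $\overline{\T(V_{p,2}(\Omega))} = L^2(\partial\Omega)$.

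Finally, since $\E$ is convex, the discussion following Theorem~\ref{thm:generation} (citing \cite[Théorème~3.1]{Br73}) shows that $S$ is a semigroup of contractions, completing the proof in the case $p \ge \tfrac{2d}{d+1}$. For the case $g=0$, $p>1$, I would note that Lemma~\ref{propo:energy-of-pDN-with-f} explicitly allows this relaxation: with $g=0$ the lower-order term drops out, so the embedding $j$ into $L^{2d/(d-1)}(\Omega)$ is no longer required, and coercivity of $\E_\omega$ follows directly from the definition of the norm on $V_{p,2}(\Omega)$. Thus the same argument goes through verbatim.

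The proof is essentially a verification that the abstract machinery applies; there is no genuine obstacle, since all the substantive work (convexity, $\T$-ellipticity, identification of the subgradient with $\Lambda_{p,g}$, dense range of $\T$) has already been carried out in Lemma~\ref{propo:energy-of-pDN-with-f} and Lemma~\ref{lem:density}. The only point requiring mild care is that $\overline{\T(V_{p,2}(\Omega))}$ genuinely equals $L^2(\partial\Omega)$, which is where the local finiteness of $\mathcal{H}^{d-1}$ on $\partial\Omega$ is used via Lemma~\ref{lem:density}.
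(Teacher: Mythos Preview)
Your proposal is correct and follows exactly the approach the paper takes: the paper's entire proof consists of the single sentence ``Our desired generation result now follows from Theorem~\ref{thm:generation}'' preceding the theorem statement. You have simply spelled out in detail what that sentence encapsulates---verifying the hypotheses of Theorem~\ref{thm:generation} via Lemma~\ref{propo:energy-of-pDN-with-f}, invoking Lemma~\ref{lem:density} for the density of the range of $\T$, and citing convexity together with \cite[Th\'eor\`eme~3.1]{Br73} for the contraction property.
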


We wish to study the order properties of this semigroup, and in particular 
show that it extrapolates to $L^q(\partial\Omega)$ for $q \in [1,\infty)$. 

\begin{theorem}\label{propo:order-preserving}
  Let $p$ and $g$ be as in Lemma~\ref{propo:energy-of-pDN-with-f}.
  Then the semigroup $S$ generated by $\Lambda_{p,g}$ on
  $L^{2}(\partial\Omega)$ is order preserving and
  $L^\infty$-contractive.  If, in addition, $g(x,0)=0$ for almost
  every $x\in\Omega$, then the semigroup is also positive and
  extrapolates to a strongly continuous semigroup of contractions on
  $L^\psi (\partial\Omega )$ for every $N$-function $\psi$.  If $g=0$,
  then the condition on $p$ may be relaxed to $p>1$.
\end{theorem}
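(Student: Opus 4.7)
The plan is to apply Corollary~\ref{cor:order-preservingness}, Theorem~\ref{thm:Linfty-contractivity}, Theorem~\ref{thm:positive-semigroups} and Theorem~\ref{thm:L1-contractivity} to the functional $\E$ from~\eqref{energy-p-DN-with-f} with the map $\T$ playing the role of $j$. Note that all invariance hypotheses are available: the semigroup $S$ exists by Theorem~\ref{th:d-to-n-gen}, $\E$ is convex, proper, lower semicontinuous and $\T$-elliptic by Lemma~\ref{propo:energy-of-pDN-with-f}, and $\T V_{p,2}(\Omega)$ is dense in $L^{2}(\partial\Omega)$ by Lemma~\ref{lem:density}. Throughout, the crucial tool is Lemma~\ref{lem:lattice}, which guarantees that $V_{p,2}(\Omega)$ is a lattice on which both $\T$ and $j$ are lattice homomorphisms, so all infima/suprema and truncations of admissible test functions remain in $V_{p,2}(\Omega)$ and commute with $\T$ and $j$.

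For order preservation, given $\hat{u}_1, \hat{u}_2\in V_{p,2}(\Omega)$, I would choose $\hat{v}_1 := \hat{u}_1\wedge\hat{u}_2$ and $\hat{v}_2 := \hat{u}_1\vee\hat{u}_2$. Since $\T$ is a lattice homomorphism, $\T\hat{v}_{1}=\T\hat{u}_1\wedge\T\hat{u}_2$ and $\T\hat{v}_{2}=\T\hat{u}_1\vee\T\hat{u}_2$, and the characteristic-function identities $|\nabla(\hat{u}_1\wedge\hat{u}_2)|^{p}=|\nabla\hat{u}_1|^{p}\mathds{1}_{\{\hat{u}_{1}\le\hat{u}_{2}\}}+|\nabla\hat{u}_2|^{p}\mathds{1}_{\{\hat{u}_{1}>\hat{u}_{2}\}}$ (and symmetrically for $\vee$), combined with the analogous identities for $G(x,j(\cdot))$, give immediately $\E(\hat{v}_{1})+\E(\hat{v}_{2})=\E(\hat{u}_{1})+\E(\hat{u}_{2})$, exactly as in the proof of Theorem~\ref{thm:robin-semigroup}. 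Corollary~\ref{cor:order-preservingness} then yields order preservation. For $L^{\infty}$-contractivity, define $\hat{v}_1$ and $\hat{v}_2$ by the same truncation formulas used in the proof of Theorem~\ref{thm:robin-semigroup}; again they lie in $V_{p,2}(\Omega)$ by Lemma~\ref{lem:lattice} and the same splitting of the energy integrals shows $\E(\hat{v}_{1})+\E(\hat{v}_{2})\le\E(\hat{u}_{1})+\E(\hat{u}_{2})$, so Theorem~\ref{thm:Linfty-contractivity} applies.

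Now assume $g(x,0)=0$ for a.e.\ $x\in\Omega$. Then $g(x,\cdot)$ increasing forces $G(x,\cdot)\ge 0$ and $G(x,z^{+})=G(x,z)\mathds{1}_{\{z\ge 0\}}\le G(x,z)$ for all $z\in\R$. Taking $\hat{v}=\hat{u}^{+}\in V_{p,2}(\Omega)$, we have $\T\hat{v}=(\T\hat{u})^{+}$ and $|\nabla\hat{u}^{+}|^{p}=|\nabla\hat{u}|^{p}\mathds{1}_{\{\hat{u}>0\}}$, whence $\E(\hat{u}^{+})\le\E(\hat{u})$. Positivity then follows from Theorem~\ref{thm:positive-semigroups}. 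For the extrapolation, Theorem~\ref{thm:L1-contractivity} requires order preservation and $L^{\infty}$-contractivity (already established) together with the existence of some $u_{0}\in L^{1}\cap L^{\infty}(\partial\Omega)$ whose orbit is locally bounded in $L^{1}\cap L^{\infty}$. The constant function $u_{0}\equiv 0$ serves: since $g(x,0)=0$, $\hat{u}\equiv 0$ is a stationary point of $\E$, so $(0,0)\in\Lambda_{p,g}$ and $S(t)0=0$ for all $t\ge 0$. Theorem~\ref{thm:L1-contractivity} then delivers an order-preserving strongly continuous contraction semigroup $S_{\psi}$ on $L^{\psi}(\partial\Omega)$ for every $N$-function $\psi$.

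Finally, in the case $g=0$ the embedding $j:V_{p,2}(\Omega)\to L^{q}(\Omega)$ plays no role in $\E$, $\Lambda_{p,0}$, or in any of the lattice and truncation calculations above; they all still make sense using only $\nabla\hat u\in L^{p}(\Omega)^{d}$ and $\T\hat u\in L^{2}(\partial\Omega)$ via the identification of $V_{p,2}(\Omega)$ with a closed sublattice of $L^{p}(\Omega)^{d}\times L^{2}(\partial\Omega)$ (cf.\ Remark~\ref{rem:trace-identify}(d)). Hence the hypothesis $p\ge\tfrac{2d}{d+1}$ (needed only to control the $G$-term in Lemma~\ref{propo:energy-of-pDN-with-f}) may be relaxed to $p>1$. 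The main technical point throughout is the availability of the lattice structure of $V_{p,2}(\Omega)$ together with the fact that $\T$ and $j$ are lattice homomorphisms; once this is in place (Lemma~\ref{lem:lattice}), the rest is a direct transcription of the Robin argument.
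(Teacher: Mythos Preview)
Your proposal is correct and follows essentially the same approach as the paper: verify the lattice-type hypotheses of Theorem~\ref{thm:positive-semigroups}, Corollary~\ref{cor:order-preservingness}, and Theorem~\ref{thm:Linfty-contractivity} with $j=\T$ by transcribing the Robin argument from Theorem~\ref{thm:robin-semigroup}, and then invoke Theorem~\ref{thm:L1-contractivity} using that $0$ is an equilibrium when $g(\cdot,0)=0$. The paper's proof merely states that this transcription goes through without writing out the details you have supplied.
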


\begin{proof}
  Obviously, it is sufficient to show that under the conditions we have imposed 
  on $g$, the energy $\E$ given by~\eqref{energy-p-DN-with-f} satisfies
  the assertion \eqref{thm:positive-semigroups-claim-1} of
  Theorem~\ref{thm:positive-semigroups}, the
  assertion~\eqref{cor:order-preservingness-claim-3} of
  Corollary~\ref{cor:order-preservingness}, and the
  assertion~\eqref{thm:Linfty-contractivity-claim-3} of
  Theorem~\ref{thm:Linfty-contractivity} with $j=\T$; this follows in
  exactly the same way as in the proof of
  Theorem~\ref{thm:robin-semigroup}. For the extrapolation one applies
  Theorem \ref{thm:L1-contractivity}, by noting that if $g(x,0)=0$ for almost every 
  $x \in \Omega$, then the
  origin in $L^2 (\partial\Omega )$ is an equilibrium point for the
  semigroup, that is, $S(t)0 = 0$ for every $t\in\R_+$.
 \end{proof}

\subsection{Coupled parabolic-elliptic systems or degenerate parabolic equations governed by a $p$-Laplace operator} \label{sec.elliptic-parabolic}

Let $\hat{\Omega}\subseteq\R^d$ be a bounded domain, 
and let $\Omega \subseteq\hat{\Omega}$ be an open subset. We consider the following coupled
parabolic-elliptic system: denoting by $D(\Delta_p^{\hat{\Omega}} )$ the domain of the
Dirichlet-$p$-Laplace operator on $\hat{\Omega}$ (the subdifferential 
of the functional $\E_D$ from Section \ref{sec.robin}), and given an $f \in L^2(\Omega)$, we 
search for a function $u$ in $(0,\infty)\times\Omega$ together with an extension $\hat u$ to 
$(0,\infty)\times\hat\Omega$ satisfying
\begin{align}
\nonumber \hat{u}(t) \in D(\Delta_p^{\hat{\Omega}} ) & \text{ for almost every }
t\geq 0 , \text{ and} \\
\label{eq.parabolic.elliptic} \begin{split}
 u & = \hat{u} \text{ in } (0,\infty )\times\Omega \\
 \partial_t u - \Delta_p u & = f  \text{ in }
(0,\infty )\times\Omega ,\\
 -\Delta_p \hat{u} & = 0  \text{ in } (0,\infty ) \times
(\hat{\Omega}\setminus\Omega ) , \\
 \hat{u} & = 0  \text{ in } (0,\infty ) \times \partial\hat{\Omega} .
\end{split}
\end{align}
This is equivalent to the degenerate equation
\begin{align}
\nonumber \hat{u}(t) \in D(\Delta_p^{\hat{\Omega}} ) & \text{ for almost every }
t\geq 0 , \text{ and} \\
\label{eq.degenerate.parabolic} \begin{split}
 u & = \hat{u} \text{ in } (0,\infty )\times\Omega \\
 1_\Omega \, \partial_t (1_\Omega \hat{u}) - \Delta_p \hat{u} & = 1_\Omega \, f  \text{ in }
(0,\infty )\times\hat{\Omega} ,\\
 \hat{u} & = 0  \text{ in } (0,\infty ) \times \partial\hat{\Omega} .
\end{split}
\end{align}
In order to reformulate these problems as an abstract gradient system,
we consider the following setting:
We let 
\[
  \V := \mathring{W}^{1,p} (\hat{\Omega}) \text{ and } \HH := L^2 (\Omega ) 
\]
with
\begin{equation*}
\begin{aligned}
\E : \mathring{W}^{1,p} (\hat{\Omega} ) & \to \eR , \\
\hat{u} & \mapsto \tfrac{1}{p} \int_{\hat{\Omega}} |\nabla \hat{u}|^p\,\dx 
\end{aligned}
\end{equation*}
and
\begin{align*}
j  : \mathring{W}^{1,p} (\hat{\Omega} ) \supseteq D(j) & \to L^2 (\Omega ) , \\
 \hat{u} & \mapsto u := \hat{u}|_\Omega ,
\end{align*}
with maximal domain. Note that $j$ is a closed, and hence weakly closed, linear operator, which is actually bounded by the Sobolev embedding theorem if $p > \frac{2d}{d+2}$. With this choice we have $(u,f)\in\partial_j\E$ if and only if 
\begin{align*}
 & \text{there exists } \hat{u}\in \mathring{W}^{1,p} (\hat{\Omega} ) \text{ with } \hat{u}|_\Omega = u \in L^2 (\Omega ) \text{ such that} \\
 & \int_{\hat{\Omega}} |\nabla\hat{u}|^{p-2} \nabla\hat{u} \nabla \hat{v}\,\dx = \int_\Omega f\hat{v}\,\dx \text{ for every } \hat{v} \in \mathring{W}^{1,p} (\hat{\Omega} ) \text{ with } \hat{v}|_\Omega \in L^2 (\Omega ) ,
\end{align*}
that is, if and only if there exists an elliptic extension $\hat{u} \in D(\Delta_p^{\hat{\Omega}})$ such that
\begin{align*}
\begin{split}
\hat{u}|_\Omega & = u , \\
-\Delta_p \hat{u} & = f \text{ in } \Omega \text{ and } \\
-\Delta_p \hat{u} & = 0 \text{ in } \hat{\Omega}\setminus\Omega .
\end{split}
\end{align*}
Hence, the coupled
parabolic-elliptic problem \eqref{eq.parabolic.elliptic} or, equivalently, the
degenerate parabolic problem \eqref{eq.degenerate.parabolic} is a special case of the abstract gradient system \eqref{eq:14} for the choice of $\V$, $\HH$, $j$ and $\E$ made above. Note that
the functional $\E$ is convex and continuously differentiable on
$\mathring{W}^{1,p} (\hat{\Omega} )$. Moreover, since $\hat{\Omega}$ is bounded, the
Poincar\'e inequality implies that the functional is also coercive. As a
consequence, by Theorem \ref{thm:max-monoton-partial-j-E}, the $j$-subgradient 
$\partial_j\E$ is maximal monotone and the negative $j$-subgradient generates  
a semigroup $S=(S(t))_{t\geq 0}$ of (nonlinear) contractions on
$L^2 (\Omega )$ (see also Theorem \ref{thm:generation}). 
More can be said about this semigroup $S$. 

\begin{theorem}
The pair $(\E ,j)$ generates a strongly continuous contraction semigroup $S$ on $L^2 (\Omega )$ which is positive, order preserving, $L^\infty$-contractive, and extrapolates to a contraction semigroup on $L^q (\Omega )$ for every $q\in [1,\infty ]$, which is strongly continuous for $q\in [1,\infty )$ and weak$^*$ continuous for $q=\infty$. Moreover, $S_D \preccurlyeq S$, where $S_D$ denotes the semigroup generated by the Dirichlet-$p$-Laplace operator on $L^2 (\Omega )$ defined in Section \ref{sec.robin}. 
\end{theorem}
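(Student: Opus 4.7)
The plan is to verify, one by one, each of the energetic characterizations of positivity (Theorem \ref{thm:positive-semigroups}), order preservation (Corollary \ref{cor:order-preservingness}), $L^\infty$-contractivity (Theorem \ref{thm:Linfty-contractivity}), extrapolation (Theorem \ref{thm:L1-contractivity}), and domination (Corollary \ref{cor:domination-property}), along the same lines as the proof of Theorem \ref{thm:robin-semigroup}. I first observe that $j(D(\E))$ is dense in $L^2(\Omega)$: any $\varphi\in C_c^\infty(\Omega)$ extends by zero to an element of $\mathring{W}^{1,p}(\hat{\Omega})\cap D(j)$ whose restriction to $\Omega$ is $\varphi$, and $C_c^\infty(\Omega)$ is dense in $L^2(\Omega)$. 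Throughout I would exploit that $\mathring{W}^{1,p}(\hat{\Omega})$ is a Banach lattice, that $j$ (being pointwise restriction) is a lattice homomorphism, and that the pointwise identity $|\nabla(\hat u\wedge\hat v)|^{p}+|\nabla(\hat u\vee\hat v)|^{p}=|\nabla\hat u|^{p}+|\nabla\hat v|^{p}$ holds a.e.

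For positivity I would take $\hat v:=\hat u^+\in\mathring{W}^{1,p}(\hat{\Omega})\cap D(j)$, for which $j(\hat v)=j(\hat u)^+$ and $\E(\hat v)\le\E(\hat u)$ by the pointwise bound $|\nabla\hat u^+|\le|\nabla\hat u|$. For order preservation I would take the lattice pair $\hat v_1:=\hat u_1\wedge\hat u_2$, $\hat v_2:=\hat u_1\vee\hat u_2$, for which the gradient identity yields equality in the energy inequality of Corollary \ref{cor:order-preservingness}. For $L^\infty$-contractivity I would take the $\alpha$-dependent truncations
\begin{displaymath}
\hat v_1 := \bigl(\hat u_1\vee\tfrac{\hat u_1+\hat u_2-\alpha}{2}\bigr)\wedge\tfrac{\hat u_1+\hat u_2+\alpha}{2},\quad \hat v_2 := \bigl(\hat u_2\wedge\tfrac{\hat u_1+\hat u_2+\alpha}{2}\bigr)\vee\tfrac{\hat u_1+\hat u_2-\alpha}{2},
\end{displaymath}
which already appear in the proof of Theorem \ref{thm:robin-semigroup}: both lie in $\mathring{W}^{1,p}(\hat{\Omega})\cap D(j)$, the lattice-homomorphism property of $j$ transports the identities for $j(\hat v_i)$ to the corresponding expressions in $L^2(\Omega)$, and the energy inequality follows from the same case analysis on $\{|\hat u_1-\hat u_2|\le\alpha\}$ used there.

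For extrapolation I would apply Theorem \ref{thm:L1-contractivity}. The point $u_0=0\in L^1\cap L^\infty(\Omega)$ is a fixed point of $S$, because $\hat u=0$ is a zero of $\E'$ and hence $0\in\partial_j\E(0)$, so its orbit is trivially locally bounded in $L^1\cap L^\infty(\Omega)$; together with the previous three steps this yields strongly continuous, order-preserving contraction semigroups $S_\psi$ on every Orlicz space $L^\psi(\Omega)$, and in particular on $L^q(\Omega)$ for $q\in(1,\infty)$. Strong continuity on $L^1(\Omega)$ follows from the remark after Theorem \ref{thm:L1-contractivity}, using $|\Omega|<\infty$ and the fixed point $0$. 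The extension to $L^\infty(\Omega)$ as a contraction semigroup is immediate from $L^\infty$-contractivity on $L^\infty(\Omega)\subseteq L^2(\Omega)$, and its weak-$*$ continuity follows in the standard way by combining strong $L^1$-continuity with the uniform bound $\|S(t)u\|_{\infty}\le\|u\|_\infty$.

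The main obstacle is the final step, $S_D\preccurlyeq S$, because it requires passing between $\mathring{W}^{1,p}(\Omega)$ and $\mathring{W}^{1,p}(\hat{\Omega})$. I would apply Corollary \ref{cor:domination-property} with $(\E_1,j_1)=(\E_D,\mathrm{Id})$ on $\mathring{W}^{1,p}(\Omega)$ and $(\E_2,j_2)=(\E,j)$: given $u_1\in\mathring{W}^{1,p}(\Omega)$ and $\hat u_2\in D(\E)\cap D(j)$ with $u_2:=j(\hat u_2)\ge 0$, extend $u_1$ by zero to $\tilde u_1\in\mathring{W}^{1,p}(\hat{\Omega})$ and take
\begin{displaymath}
\hat v_1 := (|u_1|\wedge u_2)\sign(u_1)\in\mathring{W}^{1,p}(\Omega),\qquad \hat v_2 := |\tilde u_1|\vee\hat u_2\in\mathring{W}^{1,p}(\hat{\Omega}).
\end{displaymath}
The identities $j_1(\hat v_1)=(|u_1|\wedge u_2)\sign(u_1)$ and $j_2(\hat v_2)=|u_1|\vee u_2$ on $\Omega$ are immediate, and the energy comparison splits into an integral over $\Omega$, on which the standard lattice identity yields equality with $\tfrac{1}{p}\int_\Omega(|\nabla u_1|^p+|\nabla u_2|^p)\,\dx$, plus an outside-$\Omega$ integral $\tfrac{1}{p}\int_{\hat{\Omega}\setminus\Omega}|\nabla\hat u_2^+|^p\,\dx$, which is bounded above by $\tfrac{1}{p}\int_{\hat{\Omega}\setminus\Omega}|\nabla\hat u_2|^p\,\dx$. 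Summing gives $\E_D(\hat v_1)+\E(\hat v_2)\le\E_D(u_1)+\E(\hat u_2)$, as required. This is the only part of the argument that requires more than an adaptation of the computations already carried out in Section \ref{sec.robin}.
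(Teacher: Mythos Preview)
Your proposal is correct and follows essentially the same approach as the paper's own proof: density via extension of test functions by zero, positivity via $\hat u\mapsto\hat u^+$, order preservation and $L^\infty$-contractivity via the same lattice constructions used in Theorem~\ref{thm:robin-semigroup}, and domination $S_D\preccurlyeq S$ via extension of $u_1$ by zero to $\mathring{W}^{1,p}(\hat\Omega)$ together with the choice $\hat v_2=|\tilde u_1|\vee\hat u_2$. If anything, you are slightly more careful than the paper in one place: in the domination computation you correctly identify the outside-$\Omega$ contribution as $\tfrac{1}{p}\int_{\hat\Omega\setminus\Omega}|\nabla\hat u_2^+|^p\,\dx$ (since only $j(\hat u_2)=\hat u_2|_\Omega$ is assumed nonnegative, not $\hat u_2$ itself on all of $\hat\Omega$), whereas the paper's intermediate line tacitly writes the full $\int_{\hat\Omega}|\nabla\hat u_2|^p$ there; the final inequality is of course unaffected.
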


\begin{proof}
We have remarked above that $-\partial_j\E$ generates a semigroup $S$ of nonlinear contractions on $L^2 (\Omega )$. Note that the semigroup is defined on $L^2 (\Omega )$ since $j(D(\E ))$ contains the test functions on $\Omega$ and is thus dense in $L^2 (\Omega )$. 

For every $\hat{u}\in D(j) \subseteq \mathring{W}^{1,p} (\hat{\Omega})$ one has $\hat{u}^+ \in D(j)$, $j(\hat{u}^+) = j(\hat{u})^+$ and 
\begin{align*}
\E (\hat{u}^+) & = \tfrac{1}{p} \int_{\hat{\Omega}} |\nabla\hat{u}^+|^p\,\dx \\
& = \tfrac{1}{p} \int_{\hat{\Omega}} |\nabla\hat{u}|^p \mathds{1}_{\{ \hat{u} >0\}}\,\dx \\
& \leq   \tfrac{1}{p} \int_{\hat{\Omega}} |\nabla\hat{u}|^p\,\dx = \E (\hat{u} ) .
\end{align*}
Hence, by Theorem \ref{thm:positive-semigroups}, $S$ is positive. Also the facts that $S$ is order preserving and $L^\infty$-contractive are proved in a way similar to that already used in the proof of Theorem \ref{thm:robin-semigroup}. We omit the details.

In order to show that $S_D$ is dominated by $S$, we shall apply Corollary~\ref{cor:domination-property}. Let $u_1 \in \mathring{W}^{1,p}(\Omega)$ and $\hat{u}_2 \in \mathring{W}^{1,p}(\hat\Omega)$ with $j(\hat{u}_2) \in L^2 (\Sigma )^+$. We extend $u_1$ by zero to an element in $\mathring{W}^{1,p}(\hat\Omega)$ (which in an abuse of notation we will also call $u_1$). Then $j((|u_1|\wedge \hat{u}_2) \sign(u_1)) = (|u_1| \wedge j(\hat{u}_2)) \sign (u_1) \in \mathring{W}^{1,p} (\Omega )$, $|u_1|\vee \hat{u}_2 \in \mathring{W}^{1,p} (\hat{\Omega} )$, and, noting that ${u}_1 = 0$ on $\hat{\Omega}\setminus\Omega$,
\begin{align*}
 \lefteqn{\E_D ((|u_1| \wedge j(\hat{u}_2)) \sign (u_1)) + \E (|u_1|\vee \hat{u}_2)} \\
 &\qquad = \tfrac{1}{p} \int_{\Omega} |\nabla u_1|^p \, \mathds{1}_{\{|u_1|\leq \hat{u}_2\}}\,\dx 
 + \tfrac{1}{p} \int_\Omega |\nabla \hat{u}_2|^p \, \mathds{1}_{\{|u_1| >\hat{u}_2\}}\,\dx \\
 &\qquad \phantom{=} + \tfrac{1}{p} \int_{\hat{\Omega}} |\nabla u_1|^p \, \mathds{1}_{\{ |u_1|> \hat{u}_2\}}\,\dx + \tfrac{1}{p} \int_{\hat{\Omega}} |\nabla \hat{u}_2|^p \, \mathds{1}_{\{|u_1|\leq\hat{u}_2\}}\,\dx \\
 &\qquad = \tfrac{1}{p} \int_\Omega |\nabla u_1|^p \, \mathds{1}_{\{|u_1|\leq \hat{u}_2\}}\,\dx + \int_{\hat{\Omega}} |\nabla \hat{u}_2|^p\,\dx \\
 &\qquad \leq \E_D (u_1) + \E (\hat{u}_2 ).
\end{align*}
Hence, by Corollary~\ref{cor:domination-property}, $S_D$ is dominated by $S$.
\vanish{
For the relation $S \leq S_N$, we take $u_1 \in \mathring{W}^{1,p}(\hat\Omega)$ and $u_2 \in W^{1,p}(\Omega) = D(\E)$ arbitrary, again without loss of generality nonnegative. Since $\Omega$ has the extension property, the function $u_{1|\Omega} \vee u_2 \in W^{1,p}(\Omega)$ has an extension $w \in W^{1,p}(\hat\Omega)$. If we set $v_1:= u_1 \vee w$, then $v_1 \in \mathring{W}^{1,p}(\hat\Omega)$ since $0 \leq v_1 \leq u_1 \in \mathring{W}^{1,p}(\hat\Omega)$ and $j(v_1)=v_{1|\Omega}$ is by construction $u_{1|\Omega} \vee u_2$. We also set $v_2:= u_{1|\Omega} \wedge u_2 \in W^{1,p}(\Omega)$. Then a routine calculation as above yields
\begin{displaymath}
	\E(v_1) + \E_N(v_2) = \E(u_1) + \E_N(u_2),
\end{displaymath}
which establishes that Theorem~\ref{thm:comparison-property}(2) holds and hence $S \leq S_N$ as claimed.}
\end{proof}

\begin{remark}
(a) The domination $S\preccurlyeq S_N$ on $L^2 (\Omega )$ is not true in general.

(b) It is possible to replace in \eqref{eq.parabolic.elliptic} or \eqref{eq.degenerate.parabolic} the Dirichlet boundary conditions on the boundary of $\hat{\Omega}$ by Neumann boundary conditions if one assumes that $\hat{\Omega}$ is a bounded domain with continuous boundary $\partial\hat{\Omega}$. In this case, one puts $\V = W^{1,p} (\hat{\Omega} )$ and uses the energy functional 
\begin{equation*}
\begin{aligned}
\E : W^{1,p} (\hat{\Omega} ) & \to \eR , \\
\hat{u} & \mapsto \frac{1}{p} \int_{\hat{\Omega}} |\nabla \hat{u}|^p\,\dx .
\end{aligned}
\end{equation*}
This energy functional is clearly convex and continuously differentiable, too. The fact that it is $j$-elliptic follows from \cite[Chapter 2, Th\'eor\`eme 7.6]{Nc67} or \cite[Corollary 4.4]{Si99}. 
\end{remark}

\subsection{Coupled parabolic-elliptic systems or degenerate parabolic equations governed by a $1$-Laplace operator} \label{sec.elliptic-parabolic.1}

In this final example we consider a variant of the coupled
parabolic-elliptic system \eqref{eq.parabolic.elliptic} from the
previous example with $p=1$, that is, with the $1$-Laplace operator
formally given by $\Delta_1 := {\rm div}\, (\frac{\nabla u}{|\nabla u|})$ and
generating the so-called total variation flow. This example
illustrates in particular why it can be useful to consider general locally convex
topological vector spaces in our abstract theory. Let
$\hat{\Omega}\subseteq\R^d$ be a bounded domain with Lipschitz continuous boundary 
and let $\Omega \subseteq\hat{\Omega}$ be an open, non-empty subset. We consider the coupled
parabolic-elliptic system \eqref{eq.parabolic.elliptic} with $p=1$, that is,
\begin{align}
\nonumber \hat{u}(t) \in D(\Delta_1^{\hat{\Omega}} ) & \text{ for almost every }
t\geq 0 , \text{ and} \\
\label{eq.parabolic.elliptic.1} \begin{split}
 u & = \hat{u} \text{ in } (0,\infty )\times\Omega \\
 \partial_t u - \Delta_1 u & = f  \text{ in }
(0,\infty )\times\Omega ,\\
 -\Delta_1 \hat{u} & = 0  \text{ in } (0,\infty ) \times
(\hat{\Omega}\setminus\Omega ) . 
\end{split}
\end{align}
Here $D(\Delta_1^{\hat{\Omega}} )$ is the domain of the $1$-Laplace operator on $\hat{\Omega}$ as generator of the total variation flow. Note that here we have left out the boundary conditions from \eqref{eq.parabolic.elliptic}, which are in fact redundant in \eqref{eq.parabolic.elliptic}, since they are included in the domain of the Dirichlet-$p$-Laplace operator. Here, as well, the domain $D(\Delta_1^{\hat{\Omega}} )$ encodes certain boundary conditions which will, however, not be discussed here. The operator $\Delta_1^{\hat{\Omega}}$ may be introduced as follows. Let
\[
BV(\hat{\Omega} ) := \{ \hat{u}\in L^1 (\hat{\Omega} ) : \forall 1\leq i\leq d \exists \mu_i \in M^b (\hat{\Omega} ) \forall \hat{v}\in C^1_c (\hat{\Omega}) : \int_{\hat{\Omega}} \hat{u} \partial_i \hat{v} = - \int_{\hat{\Omega}} \hat{v}\; \mathrm{d}\mu_i \} 
\]
be the space of all functions of bounded variation, that is, the space of all functions in $L^1 (\hat{\Omega})$ for which all distributional partial derivatives exist in the space of bounded Borel measures on $\hat{\Omega}$. We define the {\em total variation}
\[
 {\rm Var}\, (\hat{u},\hat{\Omega} ) := \sup \Big\{ \int_{\hat{\Omega}} \hat{u} {\rm div}\, \hat{v} : \hat{v}\in C^1_c (\hat{\Omega} )^d , \, \| \hat{v}\|_\infty \leq 1 \Big\} ,
\]
and then the space $BV(\hat{\Omega})$ is a Banach space for the norm
\[
 \| \hat{u}\|_{BV (\hat{\Omega})} := \| \hat{u}\|_{L^1(\hat{\Omega})} + {\rm Var}\, (\hat{u},\hat{\Omega} ) .
\]
Recall that $BV(\hat{\Omega})$ is a dual space by \cite[Remark 3.12,
p. 124]{AmFuPa00}; we denote the weak$^*$ topology by $\tau_{w^*}$. By
\cite[Definition 3.11, p. 124]{AmFuPa00}, $u_h\to u$ in the weak$^*$
topology if and only if $u_h\to u$ in $L^1 (\hat{\Omega})$ and
$\int_{\hat{\Omega}} u_h {\rm div}\, v \to \int_{\hat{\Omega}} u {\rm
  div}\, v$
for every $v\in C^1 (\hat{\Omega} )^d$. The latter convergence
corresponds to weak$^*$ convergence of the partial derivatives
$\mu_{h,i}$ in $M^b (\hat{\Omega}) = C_0 (\hat{\Omega})'$.

Let $\V := BV(\hat{\Omega})$ be equipped with the weak$^*$ topology
which turns it into a locally convex topological space. By \cite[Theorem 3.10, p.64]{Ru73}, the
weak topology in $(BV (\hat{\Omega} ),\tau_{w^*})$ coincides with the
weak$^*$ topology $\tau_{w^*}$ itself. Hence, by the Banach-Alaoglu
theorem, any norm bounded set in $BV (\hat{\Omega} )$ is relatively
weakly compact in $(BV (\hat{\Omega}),\tau_{w^*})$.

Let 
\[
 \E (\hat{u}) := {\rm Var}\, (\hat{u} ,\hat{\Omega} ) \quad (\hat{u}\in BV (\hat{\Omega} )) 
\]
be the total variation functional. As a pointwise supremum of linear,
weak$^*$ continuous functions, the total variation is convex and lower
semicontinuous on $(BV (\hat{\Omega} ) ,\tau_{w^*})$. Next, consider the identity
map
\begin{align*}
\hat{j}  : BV (\hat{\Omega} ) \supseteq D(\hat{j}) & \to L^2 (\hat{\Omega} ) , \\
 \hat{u} & \mapsto \hat{u} ,
\end{align*}
with maximal domain. This map is weakly closed. Clearly, by the
definition of $\E$ and the norm in $BV (\hat{\Omega} )$, and since
$\hat{\Omega}$ is bounded, all sublevels of
$\hat{u} \mapsto \E (\hat{u}) + \| \hat{u}\|_{L^2 (\hat{\Omega})}^2$ are
norm bounded, and thus, by the theorem of Banach-Alaoglu, relatively
weakly compact in $(BV (\hat{\Omega} ),\tau_{w^*})$. As a consequence, $\E$ is
$\hat{j}$-elliptic. By the operator $\Delta_1^{\hat{\Omega}}$ we then
mean exactly the negative $\hat{j}$-subgradient
$-\partial_{\hat{j}} \E$ on $L^2 (\hat{\Omega} )$.
 
Similarly as in the previous example, we reformulate the problem \eqref{eq.parabolic.elliptic.1} as an abstract gradient system by setting in addition $\HH := L^2 (\Omega )$, and by considering the restriction map 
\begin{align*}
j  : BV (\hat{\Omega} ) \supseteq D(j) & \to L^2 (\Omega ) , \\
 \hat{u} & \mapsto u := \hat{u}|_\Omega ,
\end{align*}
with maximal domain. Again, $j$ is weakly closed. With this choice we have $(u,f)\in\partial_j\E$ if and only if 
there exists an elliptic extension $\hat{u} \in D(\Delta_1^{\hat{\Omega}} )$ such that
\begin{align*}
\begin{split}
\hat{u}|_\Omega & = u , \\
-\Delta_1 \hat{u} & = f \text{ in } \Omega \text{ and } \\
-\Delta_1 \hat{u} & = 0 \text{ in } \hat{\Omega}\setminus\Omega .
\end{split}
\end{align*}
Hence, the coupled parabolic-elliptic problem \eqref{eq.parabolic.elliptic.1} or, equivalently, the
degenerate parabolic problem \eqref{eq.degenerate.parabolic} with $p=1$ is a special case of the abstract gradient system \eqref{eq:14} for the choice of $\V$, $\HH$, $j$ and $\E$ made above. 

By combining \cite[Chapter 2, Th\'eor\`eme 7.6]{Nc67} or \cite[Corollary 4.4]{Si99} with \cite[Theorem 3.9, p. 122]{AmFuPa00}, we find that
\[
 \| \hat{u} \| := \| \hat{u}\|_{L^1 (\Omega )} + {\rm Var}\, (\hat{u},\hat{\Omega} ) 
\]
defines an equivalent norm on $BV (\hat{\Omega} )$, and from here one sees that $\E$ is also $j$-elliptic. Hence, by Theorem \ref{thm:generation}, we immediately obtain the following result. 

\begin{theorem}
The pair $(\E ,j)$ generates a strongly continuous contraction semigroup $S$ on $L^2 (\Omega )$.
\end{theorem}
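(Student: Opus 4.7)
The plan is to apply Theorem~\ref{thm:generation} after reducing to a weak-to-weak continuous setting via the construction in Section~\ref{sec.j.weakly.closed}. Since $j$ is only weakly closed here, I would first pass to the graph space $\bar{\V} := G(j) \subseteq \V \times \HH$ equipped with the induced locally convex product topology, together with $\bar{j}$ and $\bar{\E}$ as defined there. The end of Section~\ref{sec.j.weakly.closed} ensures $\bar{j}$ is weak-to-weak continuous and $\partial_{\bar j}\bar{\E} = \partial_j \E$, so generation for $(\bar{\E}, \bar{j})$ yields generation for $(\E, j)$.

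Next I would verify the hypotheses of Theorem~\ref{thm:generation}: properness, lower semicontinuity, convexity, and $\bar{j}$-ellipticity of $\bar{\E}$. Convexity and $\tau_{w^*}$-lower semicontinuity of $\E$ itself are noted in the excerpt (the total variation is a pointwise supremum of weak$^*$-continuous linear functionals), and by the lemma concluding Section~\ref{sec.j.weakly.closed} both properties transfer to $\bar{\E}$. Properness follows because any $\hat{u} \in C_c^\infty(\Omega)$, extended by zero to $\hat{\Omega}$, lies in $BV(\hat{\Omega}) \cap L^2(\hat{\Omega}) \subseteq D(\E) \cap D(j)$.

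For $\bar{j}$-ellipticity, fix $\omega > 0$; convexity of $\bar{\E}_\omega$ is immediate. To check coercivity I would use the equivalent norm on $BV(\hat{\Omega})$ quoted at the end of the subsection, namely that $\|\hat{u}\|_{BV(\hat{\Omega})}$ is comparable to $\|\hat{u}\|_{L^1(\Omega)} + \mathrm{Var}(\hat{u},\hat{\Omega})$. Combined with the Cauchy--Schwarz estimate $\|\hat{u}\|_{L^1(\Omega)} \leq |\Omega|^{1/2}\|j(\hat{u})\|_{L^2(\Omega)}$, a bound on $\bar{\E}_\omega(\hat{u})$ controls the full $BV$-norm of $\hat{u}$. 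By Banach--Alaoglu the sublevel $\{\bar{\E}_\omega \leq c\}$ is then weak$^*$-precompact in $BV(\hat{\Omega})$, and since weak and weak$^*$ topologies coincide on $(BV(\hat{\Omega}), \tau_{w^*})$, it is weakly precompact in $\V$. Together with the induced $L^2$-boundedness of the $\bar{j}$-images, this yields relative weak compactness in $\bar{\V}$.

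With all hypotheses in hand, Theorem~\ref{thm:generation} produces a strongly continuous semigroup $S$ of nonlinear Lipschitz mappings on $\overline{\bar{j}(D(\bar{\E}))} = \overline{j(D(\E) \cap D(j))} \subseteq L^2(\Omega)$; convexity of $\E$ upgrades Lipschitz to contraction, as noted immediately after Theorem~\ref{thm:generation}. Since $C_c^\infty(\Omega) \subseteq j(D(\E) \cap D(j))$ is dense in $L^2(\Omega)$, the closure is the entire space and the semigroup acts on all of $L^2(\Omega)$. The main obstacle is the coercivity step, which hinges critically on the equivalent $BV$-norm from \cite[Ch.~2, Th\'eor\`eme 7.6]{Nc67}/\cite[Corollary 4.4]{Si99} combined with \cite[Theorem 3.9]{AmFuPa00}, and on recognising that the Banach--Alaoglu precompactness, a priori only weak$^*$, suffices thanks to the coincidence of the two topologies on $(BV(\hat{\Omega}), \tau_{w^*})$ observed by the authors.
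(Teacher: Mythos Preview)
Your proposal is correct and follows exactly the approach the paper takes: the paper's ``proof'' consists of the paragraph preceding the theorem, which establishes $j$-ellipticity of $\E$ via the equivalent $BV$-norm and then invokes Theorem~\ref{thm:generation} directly. You have simply filled in the details the authors leave implicit---the passage through Section~\ref{sec.j.weakly.closed} for the weakly closed $j$, the Banach--Alaoglu/coincidence-of-topologies argument for coercivity, the density of test functions for the domain, and the upgrade to contractions via convexity---all of which are consistent with the paper's setup.
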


{\bf Acknowledgement.} The authors would like to thank the referees for their careful and thoughtful reading of a first version of the manuscript and for their constructive remarks which helped to improve this article.



\nocite{CaPi78}
\nocite{ChWa12}
\nocite{Wa14}
\nocite{ArMa12}
\nocite{Ru74}
\nocite{Ho03I}
\nocite{CiGr03}
\nocite{Zi89}

\providecommand{\bysame}{\leavevmode\hbox to3em{\hrulefill}\thinspace}


\begin{thebibliography}{10}

\bibitem{AmFuPa00}
L.~Ambrosio, N.~Fusco, and D.~Pallara, \emph{Functions of {B}ounded {V}ariation
  and {F}ree {D}iscontinuity {P}roblems}, Oxford Mathematical Monographs, The
  Clarendon Press Oxford University Press, New York, 2000.

\bibitem{ArEl11}
W.~Arendt and A.~F.~M. ter Elst, \emph{The {D}irichlet-to-{N}eumann operator on
  rough domains}, J. Differential Equations \textbf{251} (2011), no.~8,
  2100--2124.

\bibitem{ArEl12}
W.~Arendt and A.~F.~M. ter Elst, \emph{Sectorial forms and degenerate differential operators}, J.
  Operator Theory \textbf{67} (2012), no.~1, 33--72. 

\bibitem{AEKS14}
W.~Arendt, A.~F.~M. ter Elst, J.~B. Kennedy, and M.~Sauter, \emph{The
  {D}irichlet-to-{N}eumann operator via hidden compactness}, J. Funct. Anal.
  \textbf{266} (2014), no.~3, 1757--1786.

\bibitem{ArMa12}
W. Arendt and R. Mazzeo, \emph{Friedlander's eigenvalue inequalities
  and the {D}irichlet-to-{N}eumann semigroup}, Commun. Pure Appl. Anal.
  \textbf{11} (2012), no.~6, 2201--2212. 

\bibitem{Att84}
H.~Attouch, \emph{Variational convergence for functions and operators},
  Applicable Mathematics Series, Pitman (Advanced Publishing Program), Boston,
  MA, 1984.

\bibitem{Barbu2010}
V. Barbu, \emph{Nonlinear differential equations of monotone types in {B}anach
              spaces}, Springer Monographs in Mathematics, Springer, New
            York, 2010.


\bibitem{By96}
L.~Barth\'elemy, \emph{Invariance d'un convex ferm\'e par un semi-groupe
  associ\'e \`a une forme non-lin\'eaire}, Abst. Appl. Anal. \textbf{1} (1996),
  237--262.

\bibitem{BeCr91}
Ph. B{\'e}nilan and M.~G. Crandall, \emph{Completely accretive operators},
  Semigroup theory and evolution equations ({D}elft, 1989), Lecture Notes in
  Pure and Appl. Math., vol. 135, Dekker, New York, 1991, pp.~41--75.

\bibitem{BePi79a}
Ph. B{\'e}nilan and C. Picard, \emph{Quelques aspects non lin\'eaires
  du principe du maximum}, S\'eminaire de {T}h\'eorie du {P}otentiel, {N}o. 4
  ({P}aris, 1977/1978), Lecture Notes in Math., vol. 713, Springer, Berlin,
  1979, pp.~1--37.

\bibitem{Be11}
M. Bergounioux, \emph{Poincar\'e-{W}irtinger inequalities in
  bounded variation function spaces}, Control Cybernet. \textbf{40} (2011),
  no.~4, 921--929.

\bibitem{Br73}
H.~Brezis, \emph{Op\'erateurs maximaux monotones et semi-groupes de
  contractions dans les espaces de {H}ilbert}, North Holland Mathematics
  Studies, vol.~5, North-Holland, Amsterdam, London, 1973.


\bibitem{CafSilv07}
L. Caffarelli, L. Silvestre, \emph{An extension problem related to the fractional {L}aplacian},
Comm. Partial Differential Equations, \textbf{32} (2007), no.~7-9,
pp.~1245--1260. 


\bibitem{CaPi78}
B. Calvert and C. Picard, \emph{Op\'erateurs accr\'etifs et {$\phi
  $}-accr\'etifs dans un espace de {B}anach}, Hiroshima Math. J. \textbf{8}
  (1978), no.~1, 11--30. 

\bibitem{ChWa12}
R. Chill and M. Warma, \emph{Dirichlet and {N}eumann boundary
  conditions for the {$p$}-{L}aplace operator: what is in between?}, Proc. Roy.
  Soc. Edinburgh Sect. A \textbf{142} (2012), no.~5, 975--1002.

\bibitem{CiGr03}
F.~Cipriani and G.~Grillo, \emph{Nonlinear {M}arkov semigroups, nonlinear
  {D}irichlet forms and application to minimal surfaces}, J. reine angew. Math.
  \textbf{562} (2003), 201--235.

\bibitem{CraLi71}
M. G. Crandall and T. M. Liggett, \emph{Generation of semi-groups of nonlinear transformations on
              general {B}anach spaces}, Amer. J. Math., \textbf{93}
            (1971), pp.~265--298.


\bibitem{Da00}
D.~Daners, \emph{Heat kernel estimates for operators with boundary conditions},
  Math. Nachr. \textbf{217} (2000), 13--41.

\bibitem{DaDr09}
D. Daners and P. Dr{\'a}bek, \emph{A priori estimates for a class of
  quasi-linear elliptic equations}, Trans. Amer. Math. Soc. \textbf{361}
  (2009), no.~12, 6475--6500.

\bibitem{DaLi88II}
R. Dautray and J.-L. Lions, \emph{Mathematical analysis and
  numerical methods for science and technology. {V}ol. 2}, Springer-Verlag,
  Berlin, 1988, Functional and variational methods, With the collaboration of
  Michel Artola, Marc Authier, Philippe B{\'e}nilan, Michel Cessenat, Jean
  Michel Combes, H{\'e}l{\`e}ne Lanchon, Bertrand Mercier, Claude Wild and
  Claude Zuily, Translated from the French by Ian N. Sneddon.

\bibitem{DaLi92V}
R. Dautray and J.-L. Lions, \emph{Mathematical analysis and numerical methods for science and
  technology. {V}ol. 5}, Springer-Verlag, Berlin, 1992, Evolution problems. I,
  With the collaboration of Michel Artola, Michel Cessenat and H{\'e}l{\`e}ne
  Lanchon, Translated from the French by Alan Craig.

\bibitem{DaLi93VI}
R. Dautray and J.-L. Lions, \emph{Mathematical analysis and numerical methods for science and
  technology. {V}ol. 6}, Springer-Verlag, Berlin, 1993, Evolution problems. II,
  With the collaboration of Claude Bardos, Michel Cessenat, Alain Kavenoky,
  Patrick Lascaux, Bertrand Mercier, Olivier Pironneau, Bruno Scheurer and
  R{\'e}mi Sentis, Translated from the French by Alan Craig.

\bibitem{EvGa92}
L.~C. Evans and R.~F. Gariepy, \emph{Measure theory and fine
  properties of functions}, Studies in Advanced Mathematics, CRC Press, Boca
  Raton, FL, 1992. 

\bibitem{Ha07}
D. Hauer, \emph{Nonlinear heat equations associated with convex functionals
  - an introduction based on the {D}irichlet p-{L}aplace operator}, Diploma
  thesis, University of Ulm, 2007.

\bibitem{Ha15}
D. Hauer, \emph{The $p$-{D}irichlet-to-{N}eumann operator with applications to
  elliptic and parabolic problems}, J. Differential Equations, In Press, available online
\href{http://dx.doi.org/10.1016/j.jde.2015.04.030}{\nolinkurl{10.1016/j.jde.2015.04.030}}, 2015.  
   
\bibitem{Ho03I}
L. H{\"o}rmander, \emph{The analysis of linear partial differential
  operators. {I}}, Classics in Mathematics, Springer-Verlag, Berlin, 2003,
  Distribution theory and Fourier analysis, Reprint of the second (1990)
  edition.

\bibitem{Li69}
{J.-L.} Lions, \emph{Quelques m\'ethodes de r\'esolution des probl\`emes aux
  limites non lin\'eaires}, Dunod, Gauthier-Villars, Paris, 1969.

\bibitem{Mz60}
V.~G. Maz{\cprime}ja, \emph{Classes of domains and imbedding theorems for
  function spaces}, Soviet Math. Dokl. \textbf{1} (1960), 882--885. 

\bibitem{Mz85}
V.~G. Maz{\cprime}ja, \emph{Sobolev spaces}, Springer Series in Soviet
  Mathematics, Springer-Verlag, Berlin, 1985, Translated from the Russian by T.
  O. Shaposhnikova.

\bibitem{Nc67}
J. Ne{\v{c}}as, \emph{Les m\'ethodes directes en th\'eorie des
  \'equations elliptiques}, Masson et Cie, \'Editeurs, Paris; Academia,
  \'Editeurs, Prague, 1967.

\bibitem{Ou04}
E.~M. Ouhabaz, \emph{Analysis of {H}eat {E}quations on {D}omains}, London
  Mathematical Society Monographs, vol.~30, Princeton University Press,
  Princeton, 2004.

\bibitem{RaRe91}
M.~M. Rao and Z.~D. Ren, \emph{Theory of {O}rlicz {S}paces}, Pure and Applied
  Mathematics, vol. 146, Marcel Dekker, 1991.

\bibitem{Rc66}
R.~T. Rockafellar, \emph{Characterization of the subdifferentials of
  convex functions}, Pacific J. Math. \textbf{17} (1966), no.~3, 497--510.

\bibitem{Rc70}
R.~T. Rockafellar, \emph{Convex analysis}, Princeton Mathematical Series,
  No. 28, Princeton University Press, Princeton, N.J., 1970.

\bibitem{Ru73}
W. Rudin, \emph{Functional analysis}, McGraw-Hill Book Co., New York, 1973,
  McGraw-Hill Series in Higher Mathematics.

\bibitem{Ru74}
W. Rudin, \emph{Real and complex analysis}, second ed., McGraw-Hill Book
  Co., New York, 1974, McGraw-Hill Series in Higher Mathematics.


\bibitem{Si99}
Ch.~G. Simader, \emph{Sobolev's original definition of his spaces
  revisited and a comparison with nowadays definition}, Matematiche (Catania)
  \textbf{54} (1999), no.~suppl., 149--178, Boundary value problems for
  elliptic and parabolic operators (Catania, 1998).

\bibitem{Wa02phd}
M. Warma, \emph{The {L}aplacian with general {R}obin boundary
  conditions}, Ph.D. thesis, University of Ulm, 2002.

\bibitem{Wa14}
M. Warma, \emph{The {$p$}-{L}aplace operator with the nonlocal {R}obin boundary
  conditions on arbitrary open sets}, Ann. Mat. Pura Appl. (4) \textbf{193}
  (2014), no.~1, 203--235.

\bibitem{Yo95}
K. Yosida, \emph{Functional analysis}, Classics in Mathematics,
  Springer-Verlag, Berlin, 1995, Reprint of the sixth (1980) edition.

\bibitem{Ze90IIb}
E. Zeidler, \emph{Nonlinear functional analysis and its applications.
  {II}/{B}}, Springer-Verlag, New York, 1990, Nonlinear monotone operators,
  Translated from the German by the author and Leo F. Boron.

\bibitem{Zi89}
W.~P. Ziemer, \emph{Weakly differentiable functions}, Graduate Texts in
  Mathematics, vol. 120, Springer-Verlag, New York, (1989).

\end{thebibliography}


 \def\cprime{$'$}
  \def\ocirc#1{\ifmmode\setbox0=\hbox{$#1$}\dimen0=\ht0 \advance\dimen0
  by1pt\rlap{\hbox to\wd0{\hss\raise\dimen0
  \hbox{\hskip.2em$\scriptscriptstyle\circ$}\hss}}#1\else {\accent"17 #1}\fi}
  \def\cprime{$'$} \def\cprime{$'$} \def\cprime{$'$}
\providecommand{\bysame}{\leavevmode\hbox to3em{\hrulefill}\thinspace}
\providecommand{\MR}{\relax\ifhmode\unskip\space\fi MR }
\providecommand{\MRhref}[2]{%
  \href{http://www.ams.org/mathscinet-getitem?mr=#1}{#2}
}
\providecommand{\href}[2]{#2}

\end{document}